\documentclass[leqno]{amsart} 
                       
\usepackage{pstricks,color}

\usepackage{pst-plot}

\usepackage{amsmath,amsthm}     
\usepackage{amssymb}            
\usepackage{euscript}           
\usepackage{graphicx,enumerate,calc,lscape,color}
\usepackage[matrix,arrow,curve,frame]{xy}   


\xymatrixcolsep{1.9pc}                          
\xymatrixrowsep{1.9pc}
\newdir{ >}{{}*!/-5pt/\dir{>}}                  

\raggedbottom

\tolerance=1500

\setcounter{tocdepth}{1}
\setcounter{secnumdepth}{2}


\newtheorem{thm}[subsection]{Theorem}
\newtheorem{defn}[subsection]{Definition}
\newtheorem{prop}[subsection]{Proposition}

\newtheorem{cor}[subsection]{Corollary}
\newtheorem{lemma}[subsection]{Lemma}

\theoremstyle{definition}  
\newtheorem{ex}[subsection]{Example}

\newtheorem{remark}[subsection]{Remark}




\newcommand{\tens}              {\otimes}               
\newcommand{\iso}               {\cong}

\newcommand{\M}         {\mathbb{M}}

\newcommand{\F}		    {\mathbb{F}}
\newcommand{\C}         {\mathbb{C}}
\newcommand{\Z}         {\mathbb{Z}}
\newcommand{\R}         {\mathbb{R}}

\DeclareMathOperator{\Spec}{Spec}
\DeclareMathOperator{\Hom}{Hom}

\DeclareMathOperator{\Ext}{Ext}

\DeclareMathOperator{\Gr}{Gr}

\DeclareMathOperator{\Sq}{Sq}
\DeclareMathOperator{\Ch}{Ch}

\newcommand{\map}{\rightarrow}




\newcommand{\anglb}[1]{{{\langle}#1{\rangle}}}


\newcommand{\cl}{\mathrm{cl}}


\numberwithin{equation}{subsection}

\definecolor{red}{rgb}{1,0,0}
\definecolor{blue}{rgb}{0.3,0.3,1}

\newrgbcolor{tauzerocolor}{0.5 0.5 0.5}
\newrgbcolor{tauonecolor}{1 0 0}
\newrgbcolor{tautwocolor}{0.3 0.3 1}
\newrgbcolor{tauthreecolor}{0 1 0}

\newrgbcolor{hzerocolor}{0.5 0.5 0.5}
\newrgbcolor{hzerotaucolor}{1 0 1}
\newrgbcolor{hzeromoretaucolor}{1 0.5 0}
\newrgbcolor{hzerotowercolor}{0.5 0.5 0.5}

\newrgbcolor{honecolor}{0.5 0.5 0.5}
\newrgbcolor{honetaucolor}{1 0 1}
\newrgbcolor{honemoretaucolor}{1 0.5 0}
\newrgbcolor{honetowercolor}{1 0 0}

\newrgbcolor{htwocolor}{0.5 0.5 0.5}
\newrgbcolor{htwotaucolor}{1 0 1}
\newrgbcolor{htwomoretaucolor}{1 0.5 0}

\newrgbcolor{donecolor}{0 0.7 0.7}
\newrgbcolor{dtwocolor}{0 0.7 0.7}
\newrgbcolor{dthreecolor}{0 0.7 0.7}
\newrgbcolor{dfourcolor}{0 0.7 0.7}
\newrgbcolor{dfivecolor}{0.2 0.2 0.7}

\newgray{gridline}{0.8}

\newpsobject{hzerotower}{psline}{linecolor=hzerotowercolor}

\newpsobject{honetower}{psline}{linecolor=honetowercolor}

\newpsobject{done}{psline}{linecolor=donecolor}
\newpsobject{dtwo}{psline}{linecolor=dtwocolor}
\newpsobject{dthree}{psline}{linecolor=dthreecolor}
\newpsobject{dfour}{psline}{linecolor=dtwocolor}
\newpsobject{dfive}{psline}{linecolor=dfivecolor}

\newpsobject{taumult}{psline}{linestyle=dashed,dash=0.1 0.2}

\newrgbcolor{Pmult}{1 0 0}
\newrgbcolor{gmult}{0 0.7 0}
\newrgbcolor{multcolor}{0 0 1}
\newrgbcolor{mmfext}{1 0 0}

\newpsobject{extn}{psline}{linecolor=mmfext}
\newpsobject{mult}{psline}{linecolor=multcolor}
\newrgbcolor{gsquare}{0 0 1}
\newrgbcolor{gcube}{1 0 1}

\newcommand{\cirrad}{0.12}

\begin{document}

\title{The cohomology of the motivic Steenrod algebra over $\Spec \C$}
\author{Daniel C.\ Isaksen}

\begin{abstract}
The purpose of this article
is to compute 
the cohomology of the motivic Steenrod algebra over $\Spec \C$
through the geometric 70-stem.
The main computational tool is the motivic May spectral sequence. 
Everywhere in this article, we are working only over $\Spec \C$,
and we are always computing at the prime $p = 2$.
\end{abstract}

\maketitle


\section{Introduction}

This article is one piece of a long-term project to explore
stable homotopy groups in motivic homotopy theory.  
Just as in classical homotopy theory, the stable motivic homotopy groups
are key to understanding the behavior of the stable cellular 
motivic homotopy
category.  
The goal of this long-term project is to supply a large quantity of
computational data about motivic stable homotopy rings.  This data
can be used to detect large-scale patterns in motivic stable
homotopy that do not arise in classical stable homotopy.
Even better, the richer motivic stable homotopy setting allows
one to easily detect some otherwise difficult
facts about the classical situation.
We provide some examples of these payoffs 
in Section \ref{sctn:examples}.

One important difference between the classical case and the
motivic case is that not every motivic spectrum is built out of spheres,
i.e., not every motivic spectrum is cellular.
Stable cellular motivic homotopy theory is more tractable than the
full motivic homotopy theory, and 
many motivic spectra of particular interest, such as
the Eilenberg-Mac Lane spectrum $H\F_2$, 
the algebraic $K$-theory spectrum $KGL$, 
and the algebraic cobordism spectrum $MGL$, 
are all cellular.  
On the other hand, some arithmetic and algebro-geometric 
questions are not accessible via the cellular theory.

Although one can study motivic homotopy theory over any base field
(or even more general base schemes), we will work only over 
$\Spec \C$, or any algebraically closed field of characteristic $0$.
The point is to explore the parts of motivic homotopy theory that
don't depend on the subtle arithmetic of the base field.
Even in this simplest case, we can find exotic phenomena that have
no classical analogues.

One way to approach the stable motivic homotopy groups is through
the motivic Adams spectral sequence.
First we must choose a prime $p$.  In this article, we are always
working at $p = 2$.
In order to carry out the $2$-primary motivic Adams spectral sequence,
one must first obtain the $E_2$-term,
which is the cohomology of the motivic Steenrod algebra.
The purpose of this article
is to describe this $E_2$-term explicitly through 
the geometric 70-stem.
We will use the motivic May spectral sequence to carry out the
computation. 

After obtaining the $E_2$-term, the next step is to analyze
differentials in the motivic Adams spectral sequence.
This will appear in a sequel \cite{I2} to this article.

A second purpose of this calculation is to provide input to the
$\rho$-Bockstein spectral sequence for computing the 
cohomology of the motivic Steenrod algebra over $\Spec \R$.
The analysis of the $\rho$-Bockstein spectral sequence,
and the further analysis of the motivic Adams spectral sequence
over $\Spec \R$, will appear in \cite{DHI}.

The exposition of such a technical calculation creates some
inherent challenges.
The goal of the article is not to describe every step from first
principles.  Rather, it is a guide for reproducing the computation
for those who are already well-versed in computations with the classical
May spectral sequence \cite{Ma1} \cite{T}.
We give the key facts and examples of how the arguments work.

This article is a natural sequel to \cite{DI}, where
the first computational properties of the motivic May spectral sequence, 
as well as of the motivic Adams spectral sequence, were established.
We will review the key inputs from \cite{DI}.

We have been careful to adhere to a philosophy of logical dependence.
We will freely borrow results from the classical
May spectral sequence, i.e., from  \cite{Ma1} and \cite{T},
without further explanation.  The point is that this article
primarily focuses on the phenomena that are truly new in the motivic
context.

We will also need
some facts from the cohomology of the classical Steenrod
algebra that have been verified only by machine \cite{Br1} \cite{Br2}.
The machine computations construct a minimal resolution of $\F_2$
over the classical Steenrod algebra.  From this resolution,
one can derive the full structure of 
the cohomology of the classical Steenrod algebra,
including multiplicative structures, Massey products, and squaring
operations.

\subsection{Outline}

We begin in Section \ref{sctn:review} with a review of the
basic facts about the motivic Steenrod algebra over $\Spec \C$,
as well as the motivic May spectral sequence over $\Spec \C$.
We also establish the computational relationship between the
motivic setting and the classical setting.
This section is mostly a summary of some parts of \cite{DI}.

Next, in Section \ref{sctn:May} we describe the main points in
computing the motivic May spectral sequence through the geometric
70-stem.  We rely heavily on results of \cite{Ma1} and \cite{T}, 
but we must also
compute several exotic differentials, i.e., 
differentials that do not occur in the classical situation.

Having obtained the $E_\infty$-term of the motivic May spectral sequence,
the next step is to consider hidden extensions.
In Section \ref{sctn:hidden-extn},
we are able to resolve every possible hidden extension by
$\tau$, $h_0$, $h_1$, and $h_2$ through the range that we are considering,
i.e., up to the geometric 70-stem.
The primary tools here are:
\begin{enumerate}
\item
shuffling relations among Massey products.
\item
squaring operations on $\Ext$ groups in the sense of \cite{Ma2}.
\item
classical hidden extensions established by machine computation \cite{Br1}.
\end{enumerate}

In Section \ref{sctn:h1-local}, we consider what happens
when the element $h_1$ is inverted in the cohomology of 
the motivic Steenrod algebra.  Because $h_1^k$ is non-zero for all $k$,
this becomes an interesting calculation.

Section \ref{sctn:table} consists of a series of tables that are essential
for bookkeeping throughout the computations.  The most important of these
is Table 14, which lists the multiplicative generators of the 
cohomology of the motivic Steenrod algebra over $\Spec \C$.

Finally, in Section \ref{sctn:Ext},
we provide charts that describe 
the cohomology of the motivic Steenrod algebra over $\Spec \C$,
through the geometric 70-stem.
These charts are the main contribution of this article.
The diagrams are best viewed in color, although they are legible in
black and white.  Large format contiguous charts are available as well.

\subsection{Some examples}
\label{sctn:examples}

In this section, we describe several of the computational 
intricacies that are established later in the article.  
We also present a few questions deserving of further study.
On a first reading, this section may be safely skipped and 
reviewed later.

\begin{ex}
\label{ex:h2g^2}
An obvious question, which already arose in \cite{DI}, is to find
elements that are killed by $\tau^n$ but not by $\tau^{n-1}$,
for various values of $n$.

The element $h_2 g^2$, which is multiplicatively indecomposable,
is the first example
of an element that is killed by $\tau^3$ but not by $\tau^2$.
This occurs because of a hidden extension
$\tau \cdot \tau h_2 g^2 = P h_1^4 h_5$.
There is an analogous relation $\tau^2 h_2 g = P h_4$,
which occurs already in the May spectral sequence.
We do not know if this generalizes to a family of relations of the
form $\tau^2 h_2 g^{2^k} = P h_1^{2^{k+2}-4} h_{k+4}$.

We will show in \cite{I2} that $h_2 g^2$ represents an element
in motivic stable homotopy that is killed by $\tau^3$ but not by
$\tau^2$.  This requires an analysis of the motivic Adams spectral
sequence.
In the vicinity of $g^{2^k}$, one might hope to find elements that 
are killed by $\tau^n$ but not by $\tau^{n-1}$,
for large values of $n$.
\end{ex}

\begin{ex}
\label{ex:h3e0}
Classically, there is a relation $h_3 \cdot e_0 = h_1 h_4 c_0$
in the geometric 24-stem 
of the cohomology of the Steenrod algebra.  This relation is hidden on the
$E_\infty$-page of the May spectral sequence.
We now give a proof of this classical relation that uses the cohomology
of the motivic Steenrod algebra.  It still relies on a hidden
extension, but in a more elementary way.

Motivically, it turns out that $h_2^3 e_0$ is non-zero,
even though it is zero classically.  
This follows from the hidden extension $h_0 \cdot h_2^2 g = h_1^3 h_4 c_0$
(see Lemma \ref{lem:hidden-h0h2^2g}).
The relation $h_2^3 = h_1^2 h_3$ then implies that
$h_1^2 h_3 e_0$ is non-zero.  Therefore,
$h_3 e_0$ is non-zero as well, and the only possibility is that
$h_3 e_0 = h_1 h_4 c_0$.
%
\end{ex}

\begin{ex}
\label{ex:h1^7h5c0}
Notice the hidden extension
$h_0 \cdot h_2^2 g^2 = h_1^7 h_5 c_0$ (and similarly,
the hidden extension $h_0 \cdot h_2^2 g = h_1^3 h_4 c_0$
that we discussed above in Example \ref{ex:h3e0}).

The next example in this family is
$h_0 \cdot h_2^2 g^3 = h_1^9 D_4$, which at first does not
appear to fit a pattern.  However,
there is a hidden extension $c_0 \cdot i_1 = h_1^4 D_4$,
so we have
$h_0 \cdot h_2^2 g^3 = h_1^5 c_0 i_1$.
Presumably, 
there is an infinitely family of hidden extensions
in which $h_0 \cdot h_2^2 g^k$ equals 
some power of $h_1$ times $c_0$ times an element related to 
$\Sq^0$ of elements associated to the image of $J$.

It is curious that $c_0 \cdot i_1$ is divisible by $h_1^4$.
An obvious question for further study is to determine the
$h_1$-divisibility of $c_0$ times elements related to
$\Sq^0$ of elements associated to the image of $J$.
For example, what is the largest power of $h_1$ that divides
$g^2 i_1$?
\end{ex}

\begin{ex}
\label{ex:g^2}
Beware that $g^2$ and $g^3$ are not actually elements 
of the geometric 40-stem and 60-stem respectively.
Rather, it is only $\tau g^2$ and $\tau g^3$ that exist
(similarly, $g$ does not exist in the 20-stem, but $\tau g$ does exist).
The reason is that there are May differentials
taking $g^2$ to $h_1^8 h_5$, and $g^3$ to $h_1^6 i_1$.
In other words,
$\tau g^2$ and $\tau g^3$ are multiplicatively indecomposable elements.
More generally, we anticipate that the element $g^k$ does not exist
because it supports a May differential related to $\Sq^0$ of
an element in the image of $J$.
\end{ex}

\begin{ex}
There is an isomorphism from the cohomology of the classical Steenrod
algebra to the cohomology of the motivic Steenrod algebra over $\Spec \C$
concentrated in degrees of the form
$\left( 2s+f, f, \frac{s+f}{2} \right)$.
This isomorphism preserves all higher structure,
including squaring operations and Massey products.
See Section \ref{subsctn:Chow-deg-zero} for more details.

For example, the existence of the classical element $P h_2$
immediately implies that $h_3 g$ must be non-zero in the motivic setting;
no calculations are necessary.

Another example is that
$h_1^{2^k-1} h_{k+2}$ is non-zero motivically for all $k \geq 1$,
because $h_0^{2^k-1} h_{k+1}$ is non-zero classically.
\end{ex}


\begin{ex}
Many elements are $h_1$-local in the sense that they
support infinitely many multiplications by $h_1$.  
In fact, any product of the 
symbols $h_1$, $c_0$, $P$, $d_0$, $e_0$, and $g$, if it
exists, is non-zero.  This is detectable in the cohomology
of motivic $A(2)$ \cite{I1}.

Moreover, the element $B_1$ in the 46-stem is $h_1$-local, and any
product of $B_1$ with elements in the previous paragraph is again
$h_1$-local.  This is detectable in the $h_1$-local
cohomology of motivic $A(3)$, as described in
Theorem \ref{thm:A3-h1-local}.
We will see in \cite{I2} that these $h_1$-local computations
lead to interesting results about classical Adams differentials.

The $h_1$-localization of the 
cohomology of the motivic Steenrod algebra over $\Spec \C$
deserves further study.
It should be possible to make a full calculation 
in a much larger range than is possible for the
unlocalized cohomology of the motivic Steenrod algebra.  
Since localization is exact, one may localize $h_1$ in the $E_1$-term
of the May spectral sequence.  This makes the $E_1$-term much smaller,
although still complicated in sufficiently high degrees.
\end{ex}

\begin{ex}
The motivic analogue of ``wedge'' subalgebra \cite{MT}
appears to be more complicated than the classical version.
For example, none of the wedge elements support
multiplications by $h_0$ in the classical case.  
Motivically, many wedge elements do support $h_0$ multiplications.
The results in this paper naturally call for further study of 
the structure of the motivic wedge.  
\end{ex}

\subsection{Acknowledgements}

The author thanks R.\ R.\ Bruner for the use of his
extensive library of machine-assisted classical computations.
Many of the results in this article 
would have been impossible to discover without the guidance
of this data.
Similarly, the author thanks D.\ Dugger, whose machine-assisted
motivic computations give the $\M_2$-module structure
of the cohomology of the motivic Steenrod algebra.  

M.\ Tangora provided invaluable help with various technical 
computations in the classical May spectral sequence.

M.\ Mahowald and M.\ Behrens played especially pivotal roles
in giving the author the courage to attempt the calculation.

\section{Foundations}
\label{sctn:review}

Stable motivic homotopy is bigraded.  In a typical bidgree $(p,q)$,
we refer to $p$ as the geometric stem,
to $p-q$ as the topological stem, and to $q$ as the weight.

We will use the following notation extensively:
\begin{enumerate}
\item
$\M_2$ is the mod 2 motivic cohomology of $\Spec \C$.
\item
$A$ is the mod 2 motivic Steenrod algebra over $\Spec \C$.
\item
$\Ext$ is the trigraded ring $\Ext_A(\M_2,\M_2)$.
\item
$A_{\cl}$ is the classical mod 2 Steenrod algebra.
\item
$\Ext_{\cl}$ is the bigraded ring $\Ext_{A_{\cl}}(\F_2,\F_2)$.
\end{enumerate}

The following two deep theorems of Voevodsky are the starting points
of our calculations.

\begin{thm}[\cite{V1}]
$\M_2$ is the bigraded ring $\F_2[\tau]$, where
$\tau$ has bidegree $(0,1)$.
\end{thm}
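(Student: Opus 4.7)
The plan is to compute $H^{p,q}(\Spec \C; \Z/2)$ in each bidegree and then identify the multiplicative structure. The bigraded object $\M_2$ is defined via Voevodsky's motivic complexes $\Z/2(q)$, so the starting point is to reduce everything to more familiar invariants of the field $\C$.

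First I would record the general vanishing: for any field $k$, $H^{p,q}(\Spec k; \Z/2) = 0$ when $p > q$, which comes directly from the construction of the motivic complexes (they are supported in cohomological degrees $\leq q$ on smooth schemes). So the only nonzero bidegrees lie in the wedge $0 \leq p \leq q$. Next I would invoke the Beilinson-Lichtenbaum comparison theorem at the prime $2$, which identifies the motivic-to-\'etale comparison map
\[
H^{p,q}(\Spec k; \Z/2) \lra H^p_{et}(\Spec k; \mu_2^{\otimes q})
\]
as an isomorphism for $0 \leq p \leq q$. Now I apply this to $k = \C$: since $\C$ is algebraically closed, its absolute Galois group is trivial, so $H^p_{et}(\Spec \C; \mu_2^{\otimes q}) = \F_2$ when $p = 0$ and vanishes for $p > 0$. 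Combining the three inputs gives $H^{p,q}(\Spec \C; \Z/2) = \F_2$ exactly when $p = 0$ and $q \geq 0$, and $0$ otherwise.

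It then remains to pin down the ring structure. Each group $H^{0,q} = \F_2$ is one-dimensional, and I let $\tau \in H^{0,1}$ be its generator. The cup product $H^{0,q} \otimes H^{0,r} \to H^{0,q+r}$ sends $\tau^q \otimes \tau^r$ to $\tau^{q+r}$, and this product is nonzero because, for instance, under the Beilinson-Lichtenbaum identification it corresponds to the standard product $\F_2 \otimes \F_2 \to \F_2$ on \'etale $H^0$. Therefore $\tau^{q+r} \neq 0$ for all $q, r \geq 0$, and the natural map $\F_2[\tau] \to \M_2$ is an isomorphism of bigraded rings with $\tau$ in bidegree $(0,1)$.

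The main obstacle is of course the Beilinson-Lichtenbaum comparison at $2$: this is equivalent to the Milnor conjecture on Galois cohomology and quadratic forms, whose proof by Voevodsky requires the full machinery of motivic Steenrod operations, Rost's norm varieties, and the solution of the corresponding motivic Adams spectral sequence style comparison. Everything else in the argument (the vanishing for $p > q$, the triviality of Galois cohomology of $\C$, and the extraction of the ring structure from the group computation) is formal by comparison. In practice I would simply cite \cite{V1} for this deep input and carry out only the short extraction step above.
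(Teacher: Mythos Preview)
Your outline is correct, and there is nothing in the paper to compare it against: the paper states this theorem with a bare citation to \cite{V1} and gives no proof, treating it (together with the description of the motivic Steenrod algebra) as a black-box input from Voevodsky's work. Your sketch correctly identifies the Beilinson--Lichtenbaum comparison at the prime $2$ (equivalently, the Milnor conjecture) as the deep ingredient, and the extraction of the additive and multiplicative structure from the triviality of $\mathrm{Gal}(\overline{\C}/\C)$ is exactly the right short argument layered on top of it.
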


\begin{thm}[\cite{V2} \cite{V3}]
The motivic Steenrod algebra $A$ is the $\M_2$-algebra generated by
elements $\Sq^{2k}$ and $\Sq^{2k-1}$ for all $k \geq 1$, of bidegrees
$(2k,k)$ and $(2k-1,k-1)$ respectively, and satisfying the following
relations for $a< 2b$:
\[ 
\Sq^a \Sq^b = 
\sum_{c} \binom{b-1-c}{a-2c} \tau^{?}\Sq^{a+b-c} \Sq^c.
\]
\end{thm}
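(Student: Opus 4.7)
The plan is to follow the overall strategy of Voevodsky's original papers \cite{V2}\cite{V3}, which adapts Milnor's construction of the classical Steenrod algebra via a total power operation. There are three pieces: construct the $\Sq^i$, verify the Adem relations in the form stated, and prove that these generate the full algebra of bistable mod $2$ operations over $\M_2$.

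First I would construct the operations. The starting point is the motivic cohomology of $B\mu_2$ over $\Spec \C$, which is $\M_2[u,v]$ with $u$ in bidegree $(1,1)$, $v$ in bidegree $(2,1)$, and relation $u^2 = \tau v$ (the $\rho u$ term that would appear over a general base vanishes here). A total power operation is built as the composite of the external square $H^{n,w}(X;\F_2)\to H^{2n,2w}(X\wedge X;\F_2)$ with the diagonal, factored through the $\Sigma_2$-quotient to land in $H^{*,*}(X\times B\mu_2;\F_2)$. Writing the image as a polynomial in $u$ and $v$ and reading off coefficients produces operations whose bidegrees are forced by the weight of $v$; these are the $\Sq^{2k}$, together with Bocksteins $\Sq^{2k-1}$ obtained in the standard way. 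The bidegrees $(2k,k)$ and $(2k-1,k-1)$ are then automatic.

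Second I would derive the Adem relations by applying the total power twice and comparing coefficients in $H^{*,*}(X\times B\mu_2\times B\mu_2;\F_2)$, after bisymmetrizing in the two copies of $B\mu_2$. The classical argument extracts the binomial coefficient $\binom{b-1-c}{a-2c}$; the only new ingredient is that each instance of the relation $u^2 = \tau v$ contributes a factor of $\tau$, and tracking how many such replacements are needed for each term on the right-hand side pins down the exponent $\tau^{?}$ purely as a function of $a$, $b$, $c$. This step is computationally delicate but conceptually parallel to the classical derivation.

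Third, and here is the main obstacle, one must show that these $\Sq^i$ generate $A$ as an $\M_2$-algebra and that the listed relations are all the relations. This amounts to computing $H^{*,*}(K(\F_2,n);\F_2)$ over $\Spec \C$ and exhibiting the admissible monomials in the $\Sq^i$ as an $\M_2$-basis. Classically this follows Serre's Postnikov-tower argument; motivically one needs a substitute that respects weight and does not rely on topological path-loop tools. This is where Voevodsky's heavy machinery (the motivic Eilenberg--Mac Lane spectrum, the Dold--Thom / symmetric-power model, and ultimately the motivic analogue of the Milnor basis together with the resolution of the Milnor conjecture) is unavoidable. Once completeness of admissible monomials is established, bistability forces the $\M_2$-algebra presentation to be exactly the one stated, with no further relations beyond the $\tau$-weighted Adem relations produced in the previous step.
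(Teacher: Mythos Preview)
The paper does not prove this theorem. It is stated in Section~2 as one of two ``deep theorems of Voevodsky'' that serve as starting points for the computation, with citations to \cite{V2} and \cite{V3}, and no argument is given beyond those references. There is therefore no ``paper's own proof'' to compare your proposal against.

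Your outline is a reasonable sketch of the strategy in Voevodsky's original work: construct the $\Sq^i$ via a total power operation and the cohomology of $B\mu_2$, derive the Adem relations by double application and coefficient comparison, and establish completeness by computing the cohomology of motivic Eilenberg--Mac~Lane spaces. You correctly identify the third step as the deep part requiring the machinery of \cite{V3}. But none of this appears in the present paper, which simply imports the result as a black box.
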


The symbol $?$ stands for either $0$ or $1$, depending on which value
makes the formula balanced in weight.
See \cite{DI} for a more detailed discussion of the motivic Adem relations.


The $A$-module structure on $\M_2$ is trivial, i.e.,
every $\Sq^k$ acts by zero.  This follows for simple degree reasons.

\subsection{$\Ext$ groups}

We are interested in computing
$\Ext_A(\M_2,\M_2)$, which we abbreviate as $\Ext$.
This is a trigraded object.  We will consistently use degrees of the
form $(s,f,w)$, where:
\begin{enumerate}
\item
$f$ is the Adams filtration, i.e., the homological degree.
\item
$s+f$ is the internal geometric degree, i.e., corresponds to the
first coordinate in the bidegrees of $A$.
\item
$s$ is the geometric stem, i.e., the internal geometric degree minus
the Adams filtration.
\item
$w$ is the weight.
\end{enumerate}

Note that $\Ext^{*,0,*} = \Hom_A^{*,*}(\M_2,\M_2)$
is dual to $\M_2$.  We will abuse notation and write
$\M_2$ for this dual.  Beware that now $\tau$, which is really
the dual of the $\tau$ that we discussed earlier, has degree $(0,0,-1)$.
Since $\Ext$ is a module over $\Ext^{*,0,*}$, i.e., over $\M_2$,
we will always describe $\Ext$ as an $\M_2$-module.

The following result is the key tool for comparing classical
and motivic computations.  The point is that the motivic
and classical computations become the same after inverting $\tau$.

\begin{prop}[\cite{DI}]
\label{prop:compare-ext}
There is an isomorphism of rings
\[
\Ext \tens_{\M_2} \M_2 [\tau^{-1}] \iso
\Ext_{A_{\cl}} \tens_{\F_2} \F_2 [\tau,\tau^{-1}].
\]
\end{prop}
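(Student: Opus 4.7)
The plan is to reduce to an isomorphism at the level of Steenrod algebras, after which the Ext statement follows from standard change-of-rings machinery using flatness of the localization $\M_2 \to \M_2[\tau^{-1}]$.

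First, writing $A[\tau^{-1}]$ for $A \tens_{\M_2} \M_2[\tau^{-1}]$, I would construct a ring isomorphism
\[
\varphi : A[\tau^{-1}] \iso A_{\cl} \tens_{\F_2} \F_2[\tau,\tau^{-1}].
\]
Since $\Sq^{2k}$ has weight $k$ and $\Sq^{2k-1}$ has weight $k-1$, the exponent $?$ in the motivic Adem relation $\Sq^a \Sq^b = \sum_c \binom{b-1-c}{a-2c} \tau^? \Sq^{a+b-c}\Sq^c$ is forced by weight balance to equal
\[
?(a,b,c) = \lfloor a/2 \rfloor + \lfloor b/2 \rfloor - \lfloor (a+b-c)/2 \rfloor - \lfloor c/2 \rfloor,
\]
which always lies in $\{0,1\}$. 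Set $\varphi(\tau) = \tau$ and $\varphi(\Sq^i) = \tau^{-\lfloor i/2 \rfloor}\Sq^i$, grading the target by declaring $\tau$ to have weight $1$ and classical $\Sq^i$ to have weight $\lfloor i/2 \rfloor$. Substituting into the motivic Adem relation and using the displayed formula for $?(a,b,c)$ shows that $\varphi$ sends the motivic Adem relations to the classical Adem relations, so $\varphi$ is a well defined ring map; its inverse is $\Sq^i \mapsto \tau^{\lfloor i/2 \rfloor}\Sq^i$.

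Second, choose a free resolution $P_\bullet \to \M_2$ over $A$, each of whose terms is finitely generated in each trigraded degree. Because $\M_2[\tau^{-1}]$ is a flat $\M_2$-module, $P_\bullet \tens_{\M_2} \M_2[\tau^{-1}]$ is a free resolution of $\M_2[\tau^{-1}]$ over $A[\tau^{-1}]$, and the natural map
\[
\Hom_A(P_n, \M_2) \tens_{\M_2} \M_2[\tau^{-1}] \iso \Hom_{A[\tau^{-1}]}\bigl(P_n \tens_{\M_2}\M_2[\tau^{-1}],\, \M_2[\tau^{-1}]\bigr)
\]
is an isomorphism. Since localization commutes with cohomology, this gives $\Ext \tens_{\M_2} \M_2[\tau^{-1}] \iso \Ext_{A[\tau^{-1}]}(\M_2[\tau^{-1}], \M_2[\tau^{-1}])$. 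Applying $\varphi$ from step one, and then a flat base change from $A_{\cl}$ up to $A_{\cl} \tens_{\F_2}\F_2[\tau,\tau^{-1}]$, identifies the right hand side with $\Ext_{\cl} \tens_{\F_2} \F_2[\tau,\tau^{-1}]$, as desired.

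The main obstacle is step one, the combinatorial identity verifying that $\varphi$ is well defined. Once the formula for $?(a,b,c)$ is checked in all cases, the conversion of motivic Adem relations to classical ones becomes a bookkeeping exercise with powers of $\tau$. One should also confirm that $\varphi$ is trigraded so that the resulting isomorphism of Ext groups is an isomorphism of trigraded rings. After step one, step two is routine from flatness of localization.
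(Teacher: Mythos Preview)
The paper does not actually prove this proposition; it is quoted from \cite{DI}. The only hint the paper gives is in the proof of Proposition~\ref{prop:may-compare}, where it invokes as a known fact that $A[\tau^{-1}]$ is isomorphic to $A_{\cl}\otimes_{\F_2}\F_2[\tau,\tau^{-1}]$. Your two-step strategy---first establish this algebra isomorphism, then deduce the $\Ext$ statement by flat base change---is exactly the right approach and is presumably what \cite{DI} does.

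That said, your explicit formula for $\varphi$ has a sign error. With $\varphi(\Sq^i)=\tau^{-\lfloor i/2\rfloor}\Sq^i$, test the motivic relation $\Sq^2\Sq^4=\Sq^6+\tau\,\Sq^5\Sq^1$ (here $?=1$ for the $c=1$ term, by your own formula). One finds
\[
\varphi(\Sq^2)\varphi(\Sq^4)=\tau^{-3}(\Sq^6+\Sq^5\Sq^1)
\quad\text{but}\quad
\varphi(\Sq^6)+\tau\,\varphi(\Sq^5)\varphi(\Sq^1)=\tau^{-3}\Sq^6+\tau^{-1}\Sq^5\Sq^1,
\]
and these disagree since $\Sq^5\Sq^1\neq 0$ classically. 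The correct map is $\varphi(\Sq^i)=\tau^{+\lfloor i/2\rfloor}\Sq^i$ (equivalently, the elements $\widetilde{\Sq}^i:=\tau^{-\lfloor i/2\rfloor}\Sq^i_{\mathrm{mot}}$ inside $A[\tau^{-1}]$ satisfy the \emph{classical} Adem relations, giving the inverse map). With this sign the exponent of $\tau$ in each term becomes $?-?=0$, and your verification goes through. Your grading convention on the target should then give classical $\Sq^i$ weight~$0$, so that $\varphi$ is weight-preserving. Once this is corrected, step two is routine as you say.
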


\subsection{The motivic May spectral sequence}

The classical May spectral sequence arises by filtering the
classical Steenrod algebra by powers of the augmentation ideal.
The same approach can be applied in the motivic setting to 
obtain the motivic May spectral sequence.  
Details appear in \cite{DI}.  Next we review the main points.

The motivic May spectral
sequence is quadruply graded.  We will always use gradings
of the form $(m,s,f,w)$, where $m$ is the May filtration,
and the other coordinates are as explained earlier.

Let $\Gr(A)$ be the associated graded algebra of $A$,
and let $\Gr(A_{\cl})$ be the associated graded algebra of $A_{\cl}$.

\begin{thm}
\label{thm:may}
The motivic May spectral sequence takes the form
\[
E_2 = 
\Ext^{(m,s,f,w)}_{\Gr(A)}(\M_2,\M_2) \Rightarrow
\Ext_A^{(s,f,w)}(\M_2,\M_2).
\]
\end{thm}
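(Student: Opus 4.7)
The plan is to imitate May's original construction (\cite{Ma1}) in the trigraded motivic setting, leveraging the framework already established in \cite{DI}. First, I would equip the motivic Steenrod algebra $A$ with an increasing May filtration by assigning a nonnegative integer (the May weight) to each admissible monomial in the generators $\Sq^{2k}$ and $\Sq^{2k-1}$, exactly as in the classical case. The crucial verification is that the motivic Adem relations are compatible with this filtration, in the sense that each term appearing on the right-hand side has May filtration strictly smaller than the leading term on the left. Because the binomial coefficients $\binom{b-1-c}{a-2c}$ are unchanged from the classical situation, the compatibility argument reduces to the classical one once one notes that the extra factors of $\tau$ live in bidegree $(0,1)$ and do not interact with the May filtration.

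Next, form the associated graded $\M_2$-algebra $\Gr(A) = \bigoplus_m F_m A / F_{m-1} A$, which is a quadruply graded Hopf algebroid over $\M_2$ once the extra May grading $m$ is tracked. The spectral sequence itself then arises in the standard way: filter the (motivic) cobar complex computing $\Ext_A(\M_2,\M_2)$ by the induced filtration coming from the May filtration on $A$, and pass to the associated graded complex. The identification of the $E_1$-page with $\Ext_{\Gr(A)}(\M_2,\M_2)$ is formal, and May's usual reindexing shifts this to an $E_2$-page with the claimed quadruple grading $(m,s,f,w)$.

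Finally, one must check convergence. In each fixed trigrading $(s,f,w)$ the $\M_2$-module $\Ext_A^{s,f,w}(\M_2,\M_2)$ is built from finitely many pieces, and the May filtration on the cobar complex is bounded in each such trigrading; strong convergence follows by the usual bounded-filtration argument.

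The main obstacle is Step two, the verification that the motivic Adem relations respect the May filtration while simultaneously keeping track of the weight grading contributed by the $\tau^{?}$ factors. This bookkeeping is precisely what forces the extra $w$-coordinate to appear in the $E_2$-page, and is the one place where the motivic construction genuinely differs from the classical construction of \cite{Ma1}. Since this compatibility is already established in \cite{DI}, the remaining work is essentially a routine reindexing of the classical argument.
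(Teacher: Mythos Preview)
Your proposal is essentially correct and aligns with the paper's treatment, though the paper itself does not give a proof of this theorem at all: it simply states the result and defers entirely to \cite{DI} for the construction, noting only that ``the same approach can be applied in the motivic setting.'' Your sketch is a reasonable outline of what that construction in \cite{DI} actually entails. One small discrepancy: the paper describes the filtration as arising from powers of the augmentation ideal, whereas you describe it via May weights on admissible monomials; these give the same filtration in this context, but it is worth being aware of the two descriptions.
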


\begin{remark}
As in the classical May spectral sequence,
the odd differentials must be trivial for degree reasons.
\end{remark}

\begin{prop}
\label{prop:may-compare}
After inverting $\tau$,
there is an isomorphism of spectral sequences between
the motivic May spectral sequence of Theorem \ref{thm:may}
and the classical May spectral sequence, tensored over
$\F_2$ with $\F_2[\tau, \tau^{-1}]$.
\end{prop}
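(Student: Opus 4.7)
The plan is to leverage the exactness of $\tau$-localization to promote the $E_\infty$-level comparison of Proposition \ref{prop:compare-ext} to a comparison of the full spectral sequences. The May spectral sequence is constructed from a filtered complex, namely the cobar construction on the dual of $A$, equipped with a grading induced from the augmentation-ideal filtration on $A$. Since localization at $\tau$ is an exact functor on $\M_2$-modules (equivalently, $\M_2[\tau^{-1}]$ is a flat $\M_2$-algebra), it commutes with every construction that enters the spectral sequence: passage to the associated graded, formation of the cobar complex, and extraction of cohomology. Therefore it suffices to compare the underlying filtered cobar complexes.

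First I would verify the filtered-algebra comparison: after inverting $\tau$, the motivic Steenrod algebra $A$ is isomorphic to $A_{\cl} \tens_{\F_2} \F_2[\tau,\tau^{-1}]$ as filtered algebras, where both sides carry the augmentation-ideal filtration. This reduces to the observation that the motivic Adem relations differ from the classical ones only by factors of $\tau^?$, and become equivalent to the classical Adem relations after one rescales each $\Sq^k$ by a suitable power of $\tau$ chosen to balance the weight. Crucially, this rescaling acts on single generators and hence does not alter the augmentation-ideal filtration.

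Once the filtered-algebra comparison is in hand, the spectral sequence identification is essentially automatic. Applying $(-) \tens_{\M_2} \M_2[\tau^{-1}]$ to the motivic filtered cobar complex gives a filtered complex which, by the previous paragraph, matches the classical filtered cobar complex tensored with $\F_2[\tau,\tau^{-1}]$. The associated graded pieces, and hence the $E_1$-pages, agree; cohomology commutes with flat base change, so the $E_2$-pages agree (reproving and extending Proposition \ref{prop:compare-ext} at this level); and the differentials on all subsequent pages agree because they are induced from the same underlying filtered complex.

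The main obstacle — more a point of care than a serious difficulty — is the bookkeeping required to check that the rescaling-by-powers-of-$\tau$ trick genuinely produces a \emph{filtered} isomorphism, as opposed to only an isomorphism of underlying algebras. One must confirm that the $\tau^?$ factors in the motivic Adem relations do not conspire to shift the May filtration on either side of the comparison. Much of this verification is already implicit in the construction of the motivic May spectral sequence given in \cite{DI}, so the present proposition can largely be assembled from that source by explicitly tracking the identification at the level of filtered complexes rather than only at the level of $E_\infty$.
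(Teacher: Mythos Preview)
Your approach is correct and is essentially the same as the paper's: the paper's proof is the single sentence ``Start with the fact that $A[\tau^{-1}]$ is isomorphic to $A_{\cl} \tens_{\F_2} \F_2[\tau,\tau^{-1}]$, with the same May filtrations,'' and you have simply unpacked in detail why that filtered-algebra isomorphism forces the spectral sequences to agree. Your elaboration via the cobar complex and the rescaling-by-powers-of-$\tau$ check is exactly the content the paper leaves implicit (and attributes to \cite{DI}).
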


\begin{proof}
Start with the fact that $A [\tau^{-1}]$ is isomorphic to
$A_{\cl} \otimes_{\F_2} \F_2[\tau, \tau^{-1}]$, 
with the same May filtrations.
\end{proof}

This proposition means that differentials in the motivic May
spectral sequence must be compatible with the classical differentials.
This fact is critical to the success of our computations.

\subsection{$\Ext$ in Chow degree zero}
\label{subsctn:Chow-deg-zero}

\begin{defn}
Let $A'$ be the subquotient $\M_2$-algebra of $A$ generated by
$\Sq^{2k}$ for all $k \geq 0$, subject to the relation $\tau = 0$.
\end{defn}

\begin{lemma}
There is an isomorphism $A_{\cl} \map A'$ that takes
$\Sq^k$ to $\Sq^{2k}$.
\end{lemma}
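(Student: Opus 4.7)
The plan is to exhibit the assignment $\phi:\Sq^k\mapsto\Sq^{2k}$ first as a well-defined algebra homomorphism $A_{\cl}\to A'$, and then as a bijection by matching admissible bases on the two sides.

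\emph{Well-definedness.} First I apply the motivic Adem relation to $\Sq^{2a}\Sq^{2b}$ with $a<2b$:
\[
\Sq^{2a}\Sq^{2b}=\sum_{c}\binom{2b-1-c}{2a-2c}\tau^{?}\Sq^{2a+2b-c}\Sq^{c}.
\]
A weight count, using that $\Sq^{j}$ has weight $\lfloor j/2\rfloor$, forces the exponent $?$ to equal $0$ when $c$ is even and $1$ when $c$ is odd. So modulo $\tau$ only the even-$c$ terms survive; writing $c=2c'$, the relation in $A'$ becomes
\[
\Sq^{2a}\Sq^{2b}=\sum_{c'}\binom{2b-1-2c'}{2a-4c'}\Sq^{2(a+b-c')}\Sq^{2c'}.
\]
Meanwhile $\phi$ applied to the classical Adem relation $\Sq^{a}\Sq^{b}=\sum_{c'}\binom{b-1-c'}{a-2c'}\Sq^{a+b-c'}\Sq^{c'}$ yields $\sum_{c'}\binom{b-1-c'}{a-2c'}\Sq^{2(a+b-c')}\Sq^{2c'}$. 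The binomial identity $\binom{2n+1}{2k}\equiv\binom{n}{k}\pmod{2}$, with $n=b-1-c'$ and $k=a-2c'$ (immediate from Lucas's theorem, since the low bit of $2n+1$ is $1$ and the low bit of $2k$ is $0$), identifies these two sums.

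\emph{Bijectivity.} Surjectivity is clear since the $\Sq^{2k}$ generate $A'$ by construction. For injectivity I compare $\F_2$-bases. By Voevodsky's theorem, the admissible monomials $\Sq^{i_1}\cdots\Sq^{i_r}$ with $i_j\geq 2i_{j+1}$ form an $\M_2$-basis of $A$. The $\tau$-exponent calculation above shows that every Adem reduction of a product of even-indexed squares lies in the $\M_2$-span of even-indexed admissibles modulo $\tau$. Hence the subalgebra of $A$ generated by $\{\Sq^{2k}\}$, modded out by $\tau$, has $\F_2$-basis $\{\Sq^{2j_1}\cdots\Sq^{2j_r}:j_m\geq 2j_{m+1}\}$, which corresponds bijectively under $\phi$ with the classical admissible basis of $A_{\cl}$.

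The main obstacle is the bookkeeping in the first step: tracking weights to pin down the $\tau$-exponents and then verifying the binomial congruence mod $2$. Once that is in place, matching the two bases of admissible monomials is formal.
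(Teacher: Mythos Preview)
Your proof is correct and the well-definedness step is exactly the paper's argument: reduce the motivic Adem relation for $\Sq^{2a}\Sq^{2b}$ modulo $\tau$ and invoke the Lucas-type congruence $\binom{2b-1-2c'}{2a-4c'}\equiv\binom{b-1-c'}{a-2c'}\pmod 2$. Your treatment is in fact more complete than the paper's, which records only the Adem-relation check and leaves bijectivity implicit; your admissible-basis argument makes that step explicit and is sound.
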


The isomorphism takes elements of degree $n$ to elements
of bidegree $(2n,n)$.

\begin{proof}
Modulo $\tau$, the motivic Adem relation for $\Sq^{2a} \Sq^{2b}$
takes the form
\[ 
\Sq^{2a} \Sq^{2b} = 
\sum_{c} \binom{2b-1-2c}{2a-4c} \Sq^{2a+2b-2c} \Sq^2c.
\]
A standard fact from combinatorics says that
\[
\binom{2b-1-2c}{2a-4c} = \binom{b-1-c}{a-2c}
\]
modulo $2$.
\end{proof}

\begin{defn}
Let $M$ be a bigraded $A$-module.  The Chow degree
of an element $m$ in degree $(t,w)$ is equal to $t-2w$.
\end{defn}

The terminology arises from the fact that the Chow degree is
fundamental in Bloch's higher Chow group perspective on 
motivic cohomology \cite{Bl}.

\begin{defn}
Let $M$ be an $A$-module.  Define the $A'$-module
$\Ch_0(M)$ to be the subset of $M$ consisting of elements
of Chow degree zero, with $A'$-module structure induced from the
$A$-module structure on $M$.
\end{defn}

The $A'$-module structure on $\Ch_0(M)$ is well-defined since
$\Sq^{2k}$ preserves Chow degrees.

\begin{thm}
\label{thm:Chow-0}
There is an isomorphism from
$\Ext_{\cl}$ to the subalgebra of $\Ext$ consisting of elements
whose internal Chow degree is zero.
This isomorphism takes classical elements of degree
$(s,f)$ to motivic elements of degree $(2s+f,f,\frac{s+f}{2})$,
and it preserves all higher structure, including
products, squaring operations, and Massey products.
\end{thm}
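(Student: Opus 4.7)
The plan is to construct the isomorphism by applying a Chow-degree-zero extraction functor to a resolution, exploiting the identification $A_{\cl}\cong A'$ from the preceding lemma. The key general fact is that the Chow grading on motivic cohomology defines a $\Z$-grading on the category of $A$-modules which is respected by all $A$-linear maps, so the Chow-degree-zero subcomplex of any complex of $A$-modules is again a complex.

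As preliminaries, I would observe that $\Ch_0(\M_2)=\F_2$: since $\tau$ has Chow degree $-2$, only the unit of $\F_2[\tau]$ lies in Chow degree zero. I would also note that under $A_{\cl}\cong A'$ each $\Sq^k$ maps to $\Sq^{2k}$ in motivic bidegree $(2k,k)$, which has Chow degree zero, so $A'$ sits naturally inside the Chow-zero part of $A$. I would then take a minimal free $A$-resolution $P_\bullet\to\M_2$ and argue that its Chow-degree-zero subcomplex $\Ch_0(P_\bullet)\to\F_2$ is a minimal free $A_{\cl}$-resolution of $\F_2$. Exactness is automatic from the grading splitting. The freeness claim requires a careful bookkeeping argument: for each free summand $Ae$ of $P_n$, one must identify $\Ch_0(Ae)$ as a free $A_{\cl}$-module on a generator determined from $e$ by an appropriate power of $\tau$ and/or product of odd Steenrod squares; minimality of $P_\bullet$ is what makes this matching bijective. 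Applying $\Hom(-,\M_2)$ and passing to cohomology yields the isomorphism of trigraded abelian groups onto the Chow-degree-zero subalgebra of $\Ext$, with the tridegree translation following directly from the doubling shift $\Sq^k\mapsto\Sq^{2k}$.

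Preservation of higher structure is then formal. The Yoneda product on $\Ext$ arises from a diagonal approximation on $P_\bullet$ that can be chosen to preserve Chow degree, so its restriction to $\Ch_0(P_\bullet)$ realizes the classical product under $A_{\cl}\cong A'$. Massey products, defined via explicit cochain-level null-homotopies on $P_\bullet$, respect Chow gradings for the same reason, and the squaring operations of \cite{Ma2}, built from a symmetric-group action on the bar complex that is itself $A$-linear, decompose along the Chow grading. The principal obstacle is the freeness claim in the middle step: while exactness of $\Ch_0$ applied to the resolution is essentially automatic, establishing $A_{\cl}$-freeness requires careful bookkeeping of how generators of $P_n$ in various bidegrees contribute to Chow degree zero, and relies essentially on minimality of the resolution to avoid redundant generators arising from combinations of $\tau$-multiples and products of odd squares.
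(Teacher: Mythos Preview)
Your approach has a genuine gap, and it arises earlier than the freeness step you flag as the principal obstacle: the $A_{\cl}$-module structure on $\Ch_0(M)$ via $\Sq^k \mapsto \Sq^{2k}$ is not well-defined for a general $A$-module $M$. The classical relation $\Sq^1\Sq^1 = 0$ would force the motivic operation $\Sq^2\Sq^2$ to annihilate $\Ch_0(M)$, but the motivic Adem relation reads $\Sq^2\Sq^2 = \tau\,\Sq^3\Sq^1$, a nonzero Chow-degree-zero element of $A$ that therefore preserves $\Ch_0(M)$. For $M = A$ this operation sends $1 \in \Ch_0(A)$ to $\tau\,\Sq^3\Sq^1 \neq 0$, so $\Ch_0(P_n)$ does not carry the $A_{\cl}$-action you need, and the freeness question cannot even be posed. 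The same example refutes your dimension heuristic: in bidegree $(4,2)$ the space $\Ch_0(A)$ contains both $\Sq^4$ and $\tau\,\Sq^3\Sq^1$, so it is strictly larger than a rank-one free $A_{\cl}$-module in the corresponding degree, and no bookkeeping with powers of $\tau$ and products of odd squares on a single generator will repair this.

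The paper sidesteps the problem by working on the associated graded. On the May $E_1$-page the generators $h_{ij}$ have Chow degree $1$ for $j=0$ and Chow degree $0$ for $j \geq 1$, so the Chow-degree-zero part of the motivic $E_1$-term is exactly the polynomial algebra on $\{h_{ij} : j \geq 1\}$, which matches the classical May $E_1$-term under $h_{ij} \leftrightarrow h_{i,j-1}$. The Adem cross-terms like $\tau\,\Sq^3\Sq^1$ that obstruct your resolution argument lie in higher May filtration and hence vanish on the associated graded; this is precisely what lets the spectral-sequence comparison succeed where the direct resolution argument does not.
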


\begin{remark}
The internal Chow degree of an element of $\Ext^{s,f,w}$ is
$s+f-2w$, since the internal geometric degree of such an 
element is $s+f$.
\end{remark}

\begin{proof}
There is a natural transformation
$\Hom_A( -, \M_2 ) \map \Hom_{A'} ( C_0(-), \F_2 )$,
since $\Ch_0(\M_2) = \F_2$.
Since $\Ch_0$ is an exact functor, the derived functor of the
right side is $\Ext_{A'} ( \Ch_0(-), \F_2 )$.
The universal property
of derived functors gives a natural transformation
$\Ext_A (-, \M_2 ) \map \Ext_{A'} ( \Ch_0(-), \F_2 )$.
Apply this natural transformation to $\M_2$ to obtain
$\Ext_A (\M_2, \M_2 ) \map \Ext_{A'} ( \Ch_0(\M_2), \F_2 )$.
The left side is $\Ext$, and the right side 
is isomorphic to $\Ext_{\cl}$ since $A'$ is isomorphic to $A_{\cl}$.

We have now obtained a map $\Ext \map \Ext_{\cl}$.
To verify that this map is an isomorphism on the Chow degree zero
part of $\Ext$, compare
the classical May spectral sequence with 
the part of the motivic May spectral sequence in Chow degree zero.
The motivic elements $h_{i0}$ have Chow degree $1$,
while the motivic elements $h_{ij}$ have Chow degree $0$
for $j > 0$.  It follows that the motivic $E_1$-term
in Chow degree zero is the polynomial algebra over $\F_2$
generated by $h_{ij}$ for $i > 0$ and $j > 0$.
This is isomorphic to the classical $E_1$-term, where
the motivic element $h_{ij}$ corresponds to the classical 
element $h_{i,j-1}$.
\end{proof}

\begin{remark}
Similar methods show that $\Ext$ is concentrated in positive 
Chow degree.  The map $\Ext \map \Ext_{\cl}$ constructed
in the proof takes elements in strictly positive Chow degree to zero.
Thus, the quotient of $\Ext$ by the strictly positive Chow
degree part is isomorphic to $\Ext_{\cl}$.
\end{remark}

\section{Computing with the motivic May spectral sequence}
\label{sctn:May}

\subsection{The $E_1$-term}

The $E_2$-term of the May spectral sequence is the cohomology
of an algebra.  In other words, the May spectral sequence really
starts with an $E_1$-term.
As described in \cite{DI},
the motivic $E_1$-term is essentially the same as the classical
$E_1$-term.  Specifically, the motivic $E_1$-term is 
a polyonomial algebra over $\M_2$ with
generators $h_{ij}$ for all $i > 0$ and $j \geq 0$,
where:
\begin{enumerate}
\item
$h_{i0}$ has degree $(i, 2^i-2, 1, 2^{i-1} - 1 )$.
\item
$h_{ij}$ has degree $(i, 2^j ( 2^i - 1) - 1, 1, 2^{j-1} ( 2^i - 1) )$
for $j > 0$.
\end{enumerate}
The $d_1$-differential is described by the formula:
\[
d_1 ( h_{ij} ) = \sum_{0 < k < i } h_{kj} h_{i-k,k+j}.
\]

\subsection{The $E_2$-term}

We now describe the $E_2$-term of the motivic
May spectral sequence.  As explained in \cite{DI},
it turns out that the motivic $E_2$-term is essentially
the same as the classical $E_2$-term.  The following
proposition makes this precise.

\begin{prop}[\cite{DI}]
\label{prop:assoc-graded}
There are graded ring isomorphisms
\begin{enumerate}[(a)]
\item 
$\Gr(A) \iso \Gr(A_{\cl})\tens_{\F_2} \F_2[\tau]$.
\item 
$\Ext_{\Gr(A)}(\M_2,\M_2) \iso 
\Ext_{\Gr(A_{cl})}(\F_2,\F_2)\tens_{\F_2} \F_2[\tau]$.
\end{enumerate}
\end{prop}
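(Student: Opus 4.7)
The plan is to prove part (a) by passing to duals and using Voevodsky's presentation of the dual motivic Steenrod algebra, then deduce part (b) from (a) by a flat base change on resolutions.

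For (a), I would start from Voevodsky's result that the dual $A_{*}=\Hom_{\M_2}(A,\M_2)$ is the commutative $\M_2$-algebra on generators $\tau_i$ for $i\geq 0$ and $\xi_i$ for $i\geq 1$, subject over $\Spec\C$ to the single family of relations $\tau_i^2 = \tau\xi_{i+1}$. I would assign May filtrations to these generators matching the classical assignment on $\xi_i$ and $\tau_i$, while declaring $\tau$ to have May filtration $0$. The computational heart of the argument is to verify that in each relation $\tau_i^2 = \tau\xi_{i+1}$ the left-hand side has strictly higher May filtration than the right-hand side. Once this is confirmed, the relation decouples in the associated graded: $\tau_i^2$ becomes zero there, and $\xi_{i+1}$ is freed from any constraint arising from $\tau_i$. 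Consequently $\Gr(A_{*})$ becomes a polynomial-times-exterior algebra of exactly the same shape as $\Gr(A_{\cl,*})$, freely extended by the polynomial generator $\tau$. Dualizing (which is harmless, since the filtration is locally finite in each internal degree) yields the ring isomorphism in (a).

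For (b), use (a) to identify $\Gr(A)$ with $\Gr(A_{\cl})\tens_{\F_2}\F_2[\tau]$, and observe that $\M_2 = \F_2\tens_{\F_2}\F_2[\tau]$ is the corresponding trivial module. If $P_{\bullet}\to \F_2$ is a resolution of $\F_2$ over $\Gr(A_{\cl})$ by projective modules of finite type in each homological degree, then $P_{\bullet}\tens_{\F_2}\F_2[\tau]\to \M_2$ is a resolution by projective $\Gr(A)$-modules. For each such $P$, the adjunction gives $\Hom_{\Gr(A)}(P\tens_{\F_2}\F_2[\tau],\M_2)\iso \Hom_{\Gr(A_{\cl})}(P,\F_2)\tens_{\F_2}\F_2[\tau]$, and taking cohomology yields the desired decomposition of $\Ext$-groups. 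The ring structure is compatible because Yoneda composition respects the tensor factorization, the $\F_2[\tau]$-factor being central and acting by scalars.

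The main obstacle is part (a): one has to align the motivic May filtration with the classical one so that the defining relation $\tau_i^2 = \tau\xi_{i+1}$ is genuinely filtration-dropping upon passage to the associated graded. This forces the choice in which $\tau$ sits in May filtration zero (with nonzero weight), so that Voevodsky's relation is a mixed-filtration relation rather than a homogeneous one; it is precisely this choice that allows $\Gr(A_{*})$ to split cleanly as a tensor product, and that makes $\tau$ appear as a free polynomial generator in the associated graded rather than as a coefficient intertwined with the $\xi_i$'s.
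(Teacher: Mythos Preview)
The paper does not give its own proof; the proposition is quoted from \cite{DI}. Your deduction of (b) from (a) by flat base change is correct and standard.

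For (a), your dual approach has a genuine gap. You assert that once the relation $\tau_i^2=\tau\xi_{i+1}$ decouples, $\Gr(A_*)$ becomes ``a polynomial-times-exterior algebra of exactly the same shape as $\Gr(A_{\cl,*})$.'' But at $p=2$ the classical dual is $A_{\cl,*}=\F_2[\zeta_1,\zeta_2,\dots]$, purely polynomial; there are no exterior generators $\tau_i$ at all. So no associated graded of $A_{\cl,*}$ can be polynomial-on-$\xi_i$ times exterior-on-$\tau_i$, and ``matching the classical assignment on $\xi_i$ and $\tau_i$'' has no meaning classically at $p=2$. You appear to be importing the odd-prime picture, where the classical dual really does have both kinds of generators.

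What actually happens is more subtle. Under $\tau\mapsto 1$ the correspondence is $\tau_{i}\leftrightarrow\zeta_{i+1}$ and $\xi_{i}\leftrightarrow\zeta_{i}^{2}$, so the motivic generators do not line up one-for-one with the classical ones. If you push your computation through and then dualize, $\Gr(A)$ becomes an exterior $\M_2$-algebra on the $\tau_i^*$ together with the divided powers $\gamma_{2^j}(\xi_i^*)$, while $\Gr(A_{\cl})\otimes\F_2[\tau]$ is exterior over $\M_2$ on the duals of $\zeta_i^{2^j}$. These \emph{can} be matched, but only via the reindexing $\tau_{i-1}^*\leftrightarrow P^0_i$ and $\gamma_{2^{j-1}}(\xi_i^*)\leftrightarrow P^{j}_i$; establishing this bijection with the correct degrees and Hopf structure is precisely the content of (a), and it is not handled by the phrase ``same shape.'' You also owe an argument that whatever multiplicative weight filtration you place on $A_*$ really induces the $I$-adic filtration on $A$ that the paper uses to define the May spectral sequence; for the natural weight normalizations the relation $\tau_i^2=\tau\xi_{i+1}$ is in fact \emph{homogeneous}, not filtration-dropping, so this compatibility is not automatic.
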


In other words, explicit generators and relations for
the $E_2$-term can be lifted directly from the
classical situation \cite{T}.  

Moreover,
because of Proposition \ref{prop:may-compare},
the values of $d_2$ can also be lifted from the classical
situation,
except that a few factors of $\tau$ show up to give the necessary weights.
For example, classically we have the differential
\[
d_2 ( b_{20} ) = h_1^3 + h_0^2 h_2.
\]
Motivically, this does not make sense, since
$b_{20}$ and $h_0^2 h_2$ have weight $2$, while $h_1^3$ has weight $3$.
Therefore, the motivic differential must be
\[
d_2 ( b_{20} ) = \tau h_1^3 + h_0^2 h_2.
\]

Table 1 lists the multiplicative
generators of the $E_2$-term
through the geometric 70-stem, and 
Table 2 lists a generating set of relations 
for the $E_2$-term in the same range.
Table 1 also gives the values of the May $d_2$ differential,
all of which are easily deduced from the classical situation
\cite{T}.

\subsection{The $E_4$-term}

Although the $E_2$-term is quite large, the $d_2$ differential
is also very destructive.  As a result, the $E_4$-page becomes
manageable.
We obtain the $E_4$-page by 
direct computation with the $d_2$ differential.

The multiplicative generators for the $E_4$-term 
through the geometric 70-stem break into two groups.
The first group consists of generators that survive to
$E_\infty$ and become multiplicative generators of $\Ext$;
these are listed in Table 14 as elements that occur on or before
the $E_4$-page.
The second group consists of generators that do not survive
to $E_\infty$;
these are listed in Table 3.
The reader should henceforth refer to these tables for notation.

\begin{remark}
As in \cite{T}, we use the notation
$B = b_{30} b_{31} + b_{21} b_{40}$.
\end{remark}

Having described the $E_4$-term, it is now necessary to
find the values of the $d_4$ differential on the multiplicative
generators.
Most of the values of $d_4$ follow
from comparison to the classical case \cite{T},
together with a few factors of $\tau$ to balance the weights.
There is only one differential that is not classical.

\begin{lemma}
\label{lem:d4-g}
$d_4 ( g ) = h_1^4 h_4$.
\end{lemma}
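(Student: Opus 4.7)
Both sides live in tridegree $(s,f,w)=(19,5,12)$, so the statement is degree-consistent: $g$ has tridegree $(20,4,12)$, and $d_4$ sends $(s,f,w)$ to $(s-1,f+1,w)$. Classically $d_4(g)=0$ by \cite{T}, so this will be an exotic motivic differential with no classical counterpart. The plan combines Proposition \ref{prop:may-compare} (the $\tau$-inversion comparison of motivic and classical May spectral sequences) with Theorem \ref{thm:Chow-0} (the Chow degree zero isomorphism).

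First I would show that $g$ cannot be a permanent cycle on motivic $E_4$. The tridegree $(20,4,12)$ has Chow degree $20+4-24=0$, so by Theorem \ref{thm:Chow-0} the group $\Ext^{20,4,12}$ is one-dimensional, generated by the Chow-$0$ image of the classical element $h_0c_0$. This one dimension is already accounted for on motivic $E_\infty$ by a survivor of strictly lower May filtration than $m(g)$; if $g$ also survived, $\Ext^{20,4,12}$ would acquire a second independent contribution, contradicting Theorem \ref{thm:Chow-0}. Since $g$ is a multiplicative generator of the $E_4$-term and hence not a boundary, it must support a nontrivial May differential.

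Next I would identify the value. By Proposition \ref{prop:may-compare}, any differential on $g$ is $\tau$-torsion on motivic $E_r$, because after inverting $\tau$ it becomes the vanishing classical differential. I then enumerate the candidates at tridegree $(19,5,12)$ and May filtration $m(g)-4$ on motivic $E_4$, by lifting the classical generators in this range from \cite{T} with the $\tau$-factors needed for weight balance. A class is $\tau$-torsion on $E_4$ precisely when its $\tau$-multiple is a motivic $d_2$-boundary, which in turn holds precisely when the classical counterpart is a classical $d_2$-boundary. The class $h_1^4 h_4$ satisfies this: the relevant classical $d_2$-differential hitting $h_1^4 h_4$ lifts motivically to one hitting $\tau h_1^4 h_4$. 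Ruling out the other candidates in the same bidegree and May filtration by the same bookkeeping isolates $d_4(g)=h_1^4 h_4$, and simultaneously shows that the responsible differential is $d_4$ rather than any later $d_r$ (since the target of a $d_r$ with $r>4$ would sit in a different May filtration where no nonzero $\tau$-torsion class exists).

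The hardest step is this final enumeration: verifying that among all motivic $E_4$-classes at $(19,5,12)$ in the correct May filtration, only $h_1^4 h_4$ is nonzero $\tau$-torsion. This reduces to a finite inspection grounded in Tangora's classical computations \cite{T} and the tables of generators in Section \ref{sctn:table}.
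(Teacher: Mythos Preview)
Your strategy—apply Theorem \ref{thm:Chow-0} to the source $g$ to force a differential, then pin down the target via $\tau$-torsion—can be made to work, but two of your stated steps are wrong as written, and the approach is more laborious than the paper's.

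First, the classical class $h_0 c_0$ is zero (since $h_0 h_1 = 0$ and $c_0 = h_1 h_0(1)$ in May); in fact $\Ext_{\cl}^{8,4}=0$, so by Theorem \ref{thm:Chow-0} one has $\Ext^{20,4,12}=0$ outright. This only strengthens the conclusion that $g$ cannot survive, but your sentence about a one-dimensional group already accounted for by a survivor of lower May filtration is false. Second, the claim that $g$ is ``a multiplicative generator of the $E_4$-term and hence not a boundary'' is not a valid inference: being indecomposable on $E_4$ says nothing about $d_4$- or higher boundaries. The correct argument is that classically $g$ is a permanent cycle, so by Proposition \ref{prop:may-compare} no motivic $d_r$ can hit it. With these two fixes your outline goes through, and your $\tau$-torsion observation is right: $\tau h_1^4 h_4 = d_2(b_{20} h_1 h_4)$ motivically, so $h_1^4 h_4$ is $\tau$-torsion on $E_4$.

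The paper reverses your viewpoint and applies Theorem \ref{thm:Chow-0} to the \emph{target} instead: $h_1^4 h_4$ has Chow degree zero and corresponds to the classical class $h_0^4 h_3$, which vanishes in $\Ext_{\cl}$. Hence $h_1^4 h_4$ must be killed in the motivic May spectral sequence, and one checks that $g$ is the only possible source. This sidesteps the separate argument that $g$ is not itself a boundary, and replaces your enumeration of $\tau$-torsion targets across all pages $E_r$, $r\geq 4$, with a single enumeration of sources for one fixed element.
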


\begin{proof}
By the Chow degree zero isomorphism of
Theorem \ref{thm:Chow-0},
we know that $h_1^4 h_4$ cannot survive the
motivic May spectral sequence because $h_0^4 h_3$ 
is zero classically.  
There is only one possible differential
that can kill $h_1^4 h_4$.

See also \cite{DI} for a different proof of Lemma \ref{lem:d4-g}.
\end{proof}

Table 3 lists the values of the $d_4$ differential on multiplicative
generators of the $E_4$-page.

\subsection{The $E_6$-term}

We can now obtain the $E_6$-page
by direct computation with the $d_4$ differential and the Leibniz rule.

As for the $E_4$-term,
the multiplicative generators for the $E_6$-term 
through the geometric 70-stem break into two groups.
The first group consists of generators that survive to
$E_\infty$ and become multiplicative generators of $\Ext$;
these are listed in Table 14 as elements that occur on or before
the $E_6$-page.
The second group consists of generators that do not survive
to $E_\infty$;
these are listed in Table 4.

Having described the $E_6$-term, it is now necessary to
find the values of the $d_6$ differential on the multiplicative
generators.
Most of these values follow
from comparison to the classical case \cite{T},
together with a few factors of $\tau$ to balance the weights.
There are only a few differentials that are not classical.

\begin{lemma}
\mbox{}
\begin{enumerate}
\item
$d_6 ( x_{56} ) = h_1^2 h_5 c_0 d_0$.
\item
$d_6 ( P x_{56} ) = P h_1^2 h_5 c_0 d_0$.
\item
$d_6 ( B_{23} ) = h_1^2 h_5 d_0 e_0$.
\end{enumerate}
\end{lemma}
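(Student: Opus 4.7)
The approach is to adapt the standard nonexistence argument familiar from the classical May spectral sequence to the motivic weight-graded setting, leveraging Proposition \ref{prop:may-compare} and Theorem \ref{thm:Chow-0}. In each case I would show that the proposed target is a well-defined nonzero class on the $E_6$-page which cannot survive to $E_\infty$; tridegree bookkeeping in the $(m,s,f,w)$ grading then identifies the stated source as the unique element whose $d_6$ can account for its death. This is the same template used for the earlier exotic differential Lemma \ref{lem:d4-g}, just pushed up one May page.

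First I would compute the tridegrees of $h_1^2 h_5 c_0 d_0$, $P h_1^2 h_5 c_0 d_0$, and $h_1^2 h_5 d_0 e_0$ and verify that $x_{56}$, $P x_{56}$, and $B_{23}$ lie exactly one May filtration and six May stems away, with matching weight, so that a $d_6$ is the only arrow available between them. Next, for each target, I would argue nonsurvival in one of two ways. Either the target has Chow degree zero and the Chow-degree comparison of Theorem \ref{thm:Chow-0} identifies it with a class of $\Ext_{\cl}$ known (from the Bruner data \cite{Br1}) to be zero, or it has strictly positive Chow degree and lives in $\tau$-torsion where its $\tau$-inverted image vanishes, so that by Proposition \ref{prop:may-compare} it must be killed in the motivic May SS while no classical analogue exists to kill it. In both situations a nontrivial motivic differential must hit the target; the factor-of-$\tau$ discrepancy between the classical formula and the motivic weight is exactly what forces the differential to be exotic.

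Then I would eliminate alternative candidates by a finite search in the relevant tridegrees on $E_2$, $E_4$, and $E_6$: using Tables 1--4 and the Leibniz rule, enumerate all classes of the appropriate $(s,f,w)$ and check that none other than $x_{56}$, $Px_{56}$, or $B_{23}$ could support a $d_2$, $d_4$, or $d_6$ onto the target. In particular, one must verify that $d_2$ and $d_4$ vanish on each source, so that it genuinely survives to the $E_6$-page. For $x_{56}$ and $Px_{56}$ this is largely automatic from the classical $d_2$ and $d_4$ computations of \cite{T} lifted to the motivic setting with appropriate powers of $\tau$. The main obstacle will be the $B_{23}$ case: since $B_{23}$ involves the Tangora polynomial $B = b_{30}b_{31} + b_{21}b_{40}$, the vanishing of $d_4 B_{23}$ requires a delicate cancellation using Lemma \ref{lem:d4-g} together with the multiplicative relations of Table 2, and separately one must confirm, by comparison with the $\M_2$-module structure computed by Dugger, that $h_1^2 h_5 d_0 e_0$ is indeed nonzero on $E_6$ rather than already being a boundary of some earlier differential on a class I have overlooked.
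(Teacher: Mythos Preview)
Your nonsurvival argument has a genuine gap. Compute the internal Chow degrees: $h_1^2 h_5 c_0 d_0$ lies in $(s,f,w)=(55,10,31)$, so $s+f-2w=3$; similarly $h_1^2 h_5 d_0 e_0$ has Chow degree $3$ and $P h_1^2 h_5 c_0 d_0$ has Chow degree $7$. None is in Chow degree zero, so Theorem \ref{thm:Chow-0} gives no information. Your alternative---that a class with vanishing $\tau$-inverted image ``must be killed in the motivic May SS''---is simply false: such a class can perfectly well survive to $E_\infty$ as a $\tau$-torsion element of $\Ext$, and indeed many do (this is exactly the content of the red, blue, and green dots in the charts). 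So you have no mechanism forcing the targets to die, and the uniqueness-of-source search, even if carried out, would be inconclusive.

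The paper's argument avoids this entirely by working multiplicatively from a differential already known by comparison with the classical case. Table 4 records $d_6(\phi)=h_1^3 h_5 d_0$, lifted from \cite{T}. On the $E_6$-page there are relations
\[
h_1\, x_{56}=c_0\,\phi,\qquad h_1\cdot P x_{56}=P c_0\cdot\phi,\qquad h_1\,B_{23}=e_0\,\phi,
\]
coming from the descriptions $x_{56}=Bh_1b_{21}h_0(1)$, $\phi=Bh_1b_{21}$, $B_{23}=Yg$, etc. The Leibniz rule then gives, for instance, $h_1\,d_6(x_{56})=d_6(c_0\phi)=c_0\cdot h_1^3 h_5 d_0=h_1^3 h_5 c_0 d_0$, whence $d_6(x_{56})=h_1^2 h_5 c_0 d_0$. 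The other two are identical. This is a two-line argument once you spot the relations to $\phi$; no exhaustive search, no appeal to Bruner data, and no delicate analysis of $d_4 B_{23}$ is needed.
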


\begin{proof}
We have the relation $h_1 x_{56} = c_0 \phi$.
The $d_6$ differential on $\phi$ then implies that
$d_6 ( h_1 x_{56} ) = h_1^3 h_5 c_0 d_0$,
from which it follows that 
$d_6 ( x_{56} ) = h_1^2 h_5 c_0 d_0$.

The arguments for the other two differentials are similar,
using the relations
$h_1 \cdot P x_{56} = P c_0 \cdot \phi$
and
$h_1 B_{23} = e_0 \phi$.
\end{proof}

\begin{lemma}
\label{lem:d6-c0g^3}
$d_6 ( c_0 g^3 ) = h_1^{10} D_4$.
\end{lemma}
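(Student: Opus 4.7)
The plan is to deduce this differential from two auxiliary facts suggested by Examples~\ref{ex:h1^7h5c0} and~\ref{ex:g^2}: the relation $c_0 \cdot i_1 = h_1^4 D_4$, and an exotic May differential $d_6(g^3) = h_1^6 i_1$. Granting both, the calculation is just the Leibniz rule: since $c_0$ is a permanent cycle, $d_6(c_0 g^3) = c_0 \cdot d_6(g^3) = c_0 \cdot h_1^6 i_1 = h_1^6 \cdot (c_0 i_1) = h_1^{10} D_4$.

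First I would check that $c_0 g^3$ defines a genuine cycle on the $E_6$-page. Because $d_4(g) = h_1^4 h_4$ by Lemma~\ref{lem:d4-g}, a naive Leibniz computation yields $d_4(c_0 g^3) = c_0 h_1^4 h_4 g^2$ on $E_4$; one must either show this expression vanishes using the explicit relations recorded in Tables 1--3, or exhibit it as a $d_4$-boundary, so that a cycle representative persists to $E_5 = E_6$. Next I would establish the companion differential $d_6(g^3) = h_1^6 i_1$ by an argument parallel to the proof of Lemma~\ref{lem:d4-g}. Classically $g^3$ is a permanent cycle, so by Proposition~\ref{prop:may-compare} the target $h_1^6 i_1$ must be $\tau$-torsion and has to be killed in the motivic May spectral sequence. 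A filtration count then forces $g^3$ to be the unique admissible source; this is precisely the differential implicit in Example~\ref{ex:g^2}, where only $\tau g^3$ (and not $g^3$) is asserted to survive.

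Finally I would promote the relation $c_0 \cdot i_1 = h_1^4 D_4$ from its description in Example~\ref{ex:h1^7h5c0} as a hidden extension on $E_\infty$ to a genuine $E_6$-level identity. This should follow by direct inspection of the multiplicative generators listed in Table 14, together with the May filtrations of $c_0$, $i_1$, $h_1$, and $D_4$. The main obstacle is the first step: justifying that $c_0 g^3$ lifts to a cycle on $E_6$ when $g$ itself does not, and pinning down the specific May representative on which to evaluate $d_6$. A secondary difficulty is the $E_6$-level form of $c_0 i_1 = h_1^4 D_4$; neither ingredient is available from Tangora's classical computation, so both must be verified by genuinely motivic arguments.
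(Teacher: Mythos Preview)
Your ingredients are right but the assembly fails for filtration reasons. First, the differential on $g^3$ is $d_4$, not $d_6$: $g^3$ has May filtration $24$ and $h_1^6 i_1$ has May filtration $6+15=21$, a drop of $3$, so this is $d_4(g^3) = h_1^6 i_1$ (as the paper in fact records in the proof of Lemma~\ref{lem:hidden-h0h2^2g}). Thus $g^3$ does not survive to $E_6$ and the expression $d_6(g^3)$ is undefined. What actually happens is that $d_4(c_0 g^3) = h_1^6 \cdot c_0 i_1 = 0$ on $E_4$, because $c_0 i_1 = h_1 h_0(1) \cdot g\, h_3 b_{31}$ already vanishes via the $E_2$-relation $h_3 h_0(1) = 0$; so $c_0 g^3$ passes to $E_6$ as a new indecomposable, and there is no naive Leibniz formula for its $d_6$.

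Second, and more decisively, the hidden extension $c_0 \cdot i_1 = h_1^4 D_4$ cannot be promoted to any finite page: $c_0 i_1$ lives in May filtration $5+15=20$ while $h_1^4 D_4$ lives in $4+14=18$. That mismatch is precisely what makes the extension hidden, so your ``secondary difficulty'' is in fact a hard obstruction. The paper sidesteps this by moving the whole argument into $\Ext$: it first establishes $c_0 \cdot i_1 = h_1^4 D_4$ as a genuine relation in $\Ext$ via a five-fold Massey product shuffle, then combines this with $h_1^6 i_1 = 0$ in $\Ext$ to deduce $h_1^{10} D_4 = 0$ in $\Ext$, and only then returns to the spectral sequence to observe that $c_0 g^3$ is the unique admissible $d_6$-source. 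The logic runs in the opposite direction from yours---prove the target must die, then identify the differential by elimination---exactly because the forward Leibniz route is blocked by filtrations.
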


\begin{proof}
We start by computing that
$h_1^2 D_4$ belongs to $\langle c_0, h_4^2, h_3, h_1^3, h_1 h_3 \rangle$;
we will not need to worry about the indeterminacy.
One can use the May $d_2$ differential
to make this computation.  All of the threefold subbrackets are 
strictly zero, and one of the fourfold subbrackets is also strictly
zero.  However, $\langle c_0, h_4^2, h_3, h_1^3 \rangle$ equals
$\{ 0, h_1^2 h_5 e_0 \}$.

Now shuffle to obtain that $h_1^4 D_4$ is contained in
$c_0 \langle h_4^2, h_3, h_1^3, h_1 h_3, h_1^2 \rangle$.
The main point is that $h_1^4 D_4$ is divisible by $c_0$, which implies
that there is an extension $c_0 \cdot i_1 = h_1^4 D_4$
in $\Ext$ that is hidden in the May spectral sequence.

Since $h_1^6 i_1 = 0$, we conclude that $h_1^{10} D_4$ must
be zero in $\Ext$.
There is only one possible differential that can hit $h_1^{10} D_4$.
\end{proof}

\begin{remark}
The value of $d_6 ( \Delta h_0^2 Y )$ given in
\cite[Proposition 4.37(c)]{T} is incorrect because it is 
inconsistent with machine computations of $\Ext_{\cl}$ \cite{Br1}.  
The value for $d_6 (\Delta h_0^2 Y)$
given in our table is the only possibility that is consistent 
with the machine computations.
\end{remark}

Table 4 lists the values of the $d_6$ differential on multiplicative
generators of the $E_6$-page.


\subsection{The $E_8$-page}

We can now obtain the $E_8$-page
by direct computation with the $d_6$ differential and the Leibniz rule.

Once again,
the multiplicative generators for the $E_8$-term 
through the geometric 70-stem break into two groups.
The first group consists of generators that survive to
$E_\infty$ and become multiplicative generators of $\Ext$;
these are listed in Table 14 as elements that occur on or before
the $E_8$-page.
The second group consists of generators that do not survive
to $E_\infty$;
these are listed in Table 5.

Once we reach the $E_8$-term, we are nearly done.  There are 
just a few more higher differentials to deal with.

Having described the $E_8$-term, it is now necessary to
find the values of the $d_8$ differential on the multiplicative
generators.
Once again, most of these values follow
from comparison to the classical case \cite{T},
together with a few factors of $\tau$ to balance the weights.
There are only a few differentials that are not classical.

\begin{lemma}
\mbox{}
\begin{enumerate}
\item
$d_8 (g^2) = h_1^8 h_5$.
\item
$d_8 (w) = P h_1^5 h_5$.
\item
$d_8 ( \Delta c_0 g) = P h_1^4 h_5 c_0$.
\item
$d_8 ( Q_3 ) = h_1^4 h_5^2$.
\end{enumerate}
\end{lemma}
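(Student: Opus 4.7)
The plan is to imitate the proof of Lemma \ref{lem:d4-g} for all four differentials at once. In each case, I would first show that the indicated target is zero in $\Ext$, next verify that the indicated source survives to the $E_8$-page, and finally check that the source is the unique element on $E_8$ whose $d_8$ can hit the target.

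For step one, I would apply the Chow degree zero isomorphism of Theorem \ref{thm:Chow-0}. Under this isomorphism, classical $h_j$ corresponds to motivic $h_{j+1}$, so the classical identities $h_0^8 h_4 = 0$, $P h_0^5 h_4 = 0$, $P h_0^4 h_4 c_0 = 0$, and $h_0^4 h_4^2 = 0$ --- each an immediate consequence of $h_0 h_j = 0$ for $j \geq 1$ --- transport to the motivic vanishings $h_1^8 h_5 = 0$, $P h_1^5 h_5 = 0$, $P h_1^4 h_5 c_0 = 0$, and $h_1^4 h_5^2 = 0$ in $\Ext$. Since each target is non-zero at the $E_1$-page, it must be killed by some differential in the motivic May spectral sequence.

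For step two, I would verify survival of the sources. The elements $w$, $\Delta c_0 g$, and $Q_3$ all have classical analogues that survive to $E_8$ in Tangora's computation \cite{T}, and the motivic lifts survive at $E_4$ and $E_6$ for the same reasons, up to factors of $\tau$ inserted to balance weight; a direct inspection of Tables 3 and 4 confirms that no non-classical $d_4$ or $d_6$ hits them. For $g^2$, the Leibniz rule gives $d_4(g^2) = 2 g \cdot d_4(g) = 0$, and $d_6(g^2) = 0$ requires a separate argument: classically $g^2$ is a permanent cycle, so by Proposition \ref{prop:may-compare} the motivic $d_6(g^2)$ is $\tau$-torsion, and a degree check in the quadruple grading rules out any $\tau$-torsion element as a possible value.

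For step three, I would verify uniqueness of the source. This is bookkeeping using Tables 1--5 together with the already computed $d_2$, $d_4$, and $d_6$ differentials: in the relevant quadruple degree, $g^2$ (respectively $w$, $\Delta c_0 g$, $Q_3$) is the only element of $E_8$ whose $d_8$ lands in the quadruple degree of $h_1^8 h_5$ (respectively $P h_1^5 h_5$, $P h_1^4 h_5 c_0$, $h_1^4 h_5^2$). Combined with the forced vanishing from step one, this identifies the $d_8$ value. I expect the main obstacle to be the verification that $d_6(g^2) = 0$, since $g$ itself does not survive to $E_6$ so the Leibniz rule is not available; this must be handled by hand through classical comparison and a degree-by-degree check, rather than via any general principle.
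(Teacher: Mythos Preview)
Your approach works for parts (1) and (4), matching the paper's argument, but it breaks down for parts (2) and (3). The Chow degree zero isomorphism of Theorem~\ref{thm:Chow-0} only applies to motivic elements in Chow degree zero. Computing directly, motivic $P$ has Chow degree $4$ and motivic $c_0$ has Chow degree $1$, so $P h_1^5 h_5$ and $P h_1^4 h_5 c_0$ lie in Chow degrees $4$ and $5$ respectively, not $0$. In fact, under that isomorphism classical $P$ corresponds to motivic $g$ (both are $b_{2*}^2$ with the index shifted) and classical $c_0$ corresponds to motivic $c_1$; so the classical vanishing of $P h_0^5 h_4$ would transport to a statement about $g h_1^5 h_5$, not about $P h_1^5 h_5$. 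You therefore have no argument that the targets in (2) and (3) vanish in $\Ext$. (There is also a secondary slip: the classical relation is $h_j h_{j+1} = 0$, not $h_0 h_j = 0$; for instance $h_0 h_4 \neq 0$ classically, so $h_0^8 h_4 = 0$ is not ``immediate'' but rather comes from the May differential $d_8(P^2) = h_0^8 h_4$.)

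The paper instead chains the four parts together. Having proved (1), it observes that $P h_1^9 h_5 = P h_1 \cdot h_1^8 h_5 = 0$ in $\Ext$, forcing a differential whose only possible source yields $d_8(w) = P h_1^5 h_5$. Then (3) follows from (2) via the multiplicative relation $c_0 w = h_1 \cdot \Delta c_0 g$ and the Leibniz rule: $d_8(h_1 \cdot \Delta c_0 g) = c_0 \cdot d_8(w) = P h_1^5 h_5 c_0$, hence $d_8(\Delta c_0 g) = P h_1^4 h_5 c_0$. Part (4) is handled as you propose, by Chow degree zero and the classical fact $h_0^4 h_4^2 = 0$.
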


\begin{proof}
The Chow degree of $h_1^8 h_5$ is zero.  It follows from
Theorem \ref{thm:Chow-0} that
$h_1^8 h_5$ must be zero in $\Ext$, since
$h_0^8 h_4$ is zero classically.
There is only one differential that can possibly
hit $h_1^8 h_5$.

We now know that $P h_1^9 h_5 = 0$ in $\Ext$ since
$h_1^8 h_5 = 0$.  
There is only one differential that can hit this.
This shows that $d_8 ( w ) = P h_1^5 h_5$.

Using the relation $c_0 w = h_1 \cdot \Delta c_0 g$, it follows
that $d_8 ( h_1 \cdot \Delta c_0 g ) = P h_1^5 h_5 c_0$, and then that
$d_8 ( \Delta c_0 g ) = P h_1^4 h_5 c_0$.

The Chow degree of $h_1^4 h_5^2$ is zero.
Since $h_0^4 h_4^2$ is zero classically, it follows
from Theorem \ref{thm:Chow-0} that
$h_1^4 h_5^2$ must be zero in $\Ext$.
There is only one differential that can possibly hit
$h_1^4 h_5^2$.
\end{proof}

Table 5 lists the values of the $d_8$ differential on multiplicative
generators of the $E_8$-page.


\subsection{The $E_{12}$-page}

It turns out that the $d_{10}$ differential is zero through the 70-stem,
so $E_{10} = E_{12}$.  The next differential to consider
is $d_{12}$.

\begin{lemma}
The $d_{12}$-differential is zero on all multiplicative
generators of the $E_{12}$-term, except that
$d_{12} ( P^2 Q' ) = P h_0^{10} h_5 i$.
\end{lemma}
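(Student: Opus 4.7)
The plan is to combine three ingredients: the classical May spectral sequence values of $d_{12}$ from \cite{T}, the comparison Proposition~\ref{prop:may-compare}, which forces the motivic $d_{12}$ to agree with the classical one after inverting $\tau$ (modulo appropriate powers of $\tau$ inserted for weight balance), and the Chow-degree-zero isomorphism of Theorem~\ref{thm:Chow-0}, which is the tool for detecting forced $\tau$-torsion targets.

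First, I would enumerate the multiplicative generators of the $E_{12}$-page through the geometric 70-stem. Since $d_{10} = 0$ in this range, these are exactly the classes surviving $d_8$ together with any new indecomposables that appear on $E_{10}=E_{12}$; they are already catalogued in Tables 3--5 and in Table 14.

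Second, I would invoke Tangora's analysis of the classical May spectral sequence. The only nonzero classical $d_{12}$ on a multiplicative generator in this range is $d_{12}(P^2 Q') = P h_0^{10} h_5 i$. The weights of $P^2 Q'$ and $P h_0^{10} h_5 i$ are already balanced, so no extra factor of $\tau$ is required; because the classical target is not $\tau$-torsion, Proposition~\ref{prop:may-compare} forces the motivic differential to take exactly the stated value.

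Third, for every other generator $x$, the classical $d_{12}(x)$ is zero, so the motivic $d_{12}(x)$ must lie in the $\tau$-torsion part of $E_{12}$. I would rule out a nontrivial $\tau$-torsion target case by case. For most $x$ there is simply no class in the correct quadridegree $(m,s,f,w)$, as a direct inspection of $E_{12}$ shows. For the few remaining generators, I would either apply the Leibniz rule to a multiplicative factorization $x = y\cdot z$ whose factors are already known to have vanishing $d_{12}$, or I would appeal to Theorem~\ref{thm:Chow-0}: a candidate target in Chow degree zero corresponds to a classical element, hence is already accounted for, while strictly positive Chow-degree targets can be excluded by matching weights against the classical picture.

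The main obstacle is this third step, the case-by-case elimination of exotic $\tau$-torsion targets for $d_{12}$ on the remaining generators. It is not automatic from classical comparison and is the only portion of the argument that is genuinely new in the motivic setting. In practice $E_{12}$ is quite sparse in this range, so the bookkeeping is tractable, but one must check every candidate quadridegree rather than relying on the classical blueprint.
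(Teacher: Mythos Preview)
Your proposal is correct and follows the same approach as the paper, namely comparison with the classical May spectral sequence from \cite{T}. In fact, the paper's entire proof is the single sentence ``Compare to the classical case \cite{T},'' so your three-step outline is considerably more detailed than what the paper provides; in particular, your third step (ruling out exotic $\tau$-torsion targets) is exactly the verification that the paper leaves implicit.
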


\begin{proof}
Compare to the classical case \cite{T}.
\end{proof}


\subsection{The $E_{16}$-page}

The $d_{14}$ differential is zero through the 70-stem,
so $E_{14} = E_{16}$.

\begin{lemma}
The $d_{16}$-differential is zero on all multiplicative
generators of the $E_{12}$-term, except that:
\begin{enumerate}
\item
$d_{16} ( P^4) = h_0^{16} h_5$.
\item
$d_{16} ( \Delta^2 h_4 ) = h_0^8 h_5^2$.
\end{enumerate}
\end{lemma}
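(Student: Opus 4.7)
My plan is to proceed in parallel with the $d_r$ calculations carried out for $r = 4, 6, 8, 12$, leveraging Proposition \ref{prop:may-compare} to reduce most of the work to the classical computation of \cite{T}. First I would enumerate the multiplicative generators of the $E_{16}$-page through the geometric 70-stem, using the running tables of surviving and non-surviving generators assembled in the preceding subsections. For each such generator I would look up the classical value of $d_{16}$: according to Tangora, the only nontrivial values in this range occur on $P^4$ and on $\Delta^2 h_4$, producing $h_0^{16} h_5$ and $h_0^8 h_5^2$ respectively.

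For the two nontrivial differentials, Proposition \ref{prop:may-compare} guarantees that, after inverting $\tau$, the motivic $d_{16}$ agrees with the classical one. I would then perform a direct weight check: writing out the motivic weights of $P^4$ and $\Delta^2 h_4$ and comparing them to the weights of $h_0^{16} h_5$ and $h_0^8 h_5^2$ (both computed from the formulas in Section \ref{sctn:review}, noting that $h_0$ has weight zero), one reads off the power of $\tau$ required to balance each formula. Up to that weight check, the motivic differentials are then forced to be exactly the two listed.

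The main obstacle will be ruling out exotic non-classical $d_{16}$ differentials on the remaining multiplicative generators. By Proposition \ref{prop:may-compare}, any such differential must land in a $\tau$-torsion class in the appropriate tri-degree, since the classical $d_{16}$ vanishes on the corresponding generator. I would therefore run through the remaining generators one at a time and, for each, inspect the tabulated $E_{16}$-term in the relevant internal degree and weight to confirm that no $\tau$-torsion target exists. This parallels the exotic-differential arguments given at earlier pages (cf.\ Lemmas \ref{lem:d4-g} and \ref{lem:d6-c0g^3}), but by $d_{16}$ the spectral sequence has become sparse enough through the 70-stem that the case-by-case inspection is short, and I expect no exotic possibilities to arise.
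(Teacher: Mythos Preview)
Your proposal is correct and follows the same approach as the paper: comparison to the classical computation in \cite{T} via Proposition \ref{prop:may-compare}. The paper's actual proof is the single sentence ``Compare to the classical case \cite{T}.'' --- it does not spell out the weight check (which here is trivial: both targets already have the correct weight, so no $\tau$ factors appear) or the case-by-case exclusion of exotic $\tau$-torsion targets, taking for granted that by the $E_{16}$-page nothing exotic remains in range.
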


\begin{proof}
Compare to the classical case \cite{T}.
\end{proof}


\subsection{The $E_{32}$-page}

Through the 70-stem, 
the last non-zero differential is $d_{32}$.

\begin{lemma}
The $d_{32}$-differential is zero on all multiplicative
generators of the $E_{32}$-term, except that
$d_{32} ( P^8) = h_0^{32} h_6$.
\end{lemma}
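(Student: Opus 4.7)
The plan is to reduce to Tangora's classical computation \cite{T} via Proposition \ref{prop:may-compare}. After inverting $\tau$, the motivic May spectral sequence is isomorphic to the classical May spectral sequence tensored with $\F_2[\tau, \tau^{-1}]$, so every nonzero motivic $d_{32}$-differential on a multiplicative generator projects, after inverting $\tau$, to a nonzero classical $d_{32}$-differential, possibly multiplied by a power of $\tau$. In the classical case \cite{T}, the only nonzero $d_{32}$ on multiplicative generators through this range is $d_{32}(P^8) = h_0^{32} h_6$, so it suffices to lift this single equation and to rule out any purely motivic ``exotic'' $d_{32}$'s.

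To lift the equation, I verify that no factor of $\tau$ is needed to balance weights. Using Theorem \ref{thm:Chow-0}, the classical element $P^8$ of bidegree $(s,f) = (64, 32)$ corresponds to a motivic element of weight $(s+f)/2 = 48$, while the classical element $h_0^{32} h_6$ of bidegree $(63, 33)$ corresponds to a motivic element of weight $48$. The weights agree, so the motivic differential is $d_{32}(P^8) = h_0^{32} h_6$ with no $\tau$ insertion, and the bidegree shift $(-1, +1)$ matches the shape of a May $d_{32}$.

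It then remains to rule out exotic motivic $d_{32}$-differentials, i.e., ones whose classical shadow vanishes, in the spirit of the earlier exotic differentials $d_4(g) = h_1^4 h_4$, $d_6(c_0 g^3) = h_1^{10} D_4$, and $d_8(g^2) = h_1^8 h_5$. Such a differential on a generator $x$ would require a target $y$ in strictly positive internal Chow degree (so that $y$ becomes trivial after inverting $\tau$), sitting in May filtration $32$ higher than $x$ and with the correct $(s,f,w)$ shift. Through the geometric 70-stem, the $E_{32}$-page carries only a handful of surviving multiplicative generators, and by tabulating these and inspecting their degrees one checks that no source--target pair $(x,y)$ satisfies all these constraints simultaneously.

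The main obstacle is this last enumerative check. While it reduces to a short finite inspection given how sparse the surviving $E_{32}$-generators are on such a high page, it is the only step not handled by literal comparison to \cite{T}, and it must be performed carefully precisely because earlier pages did exhibit genuinely new motivic differentials that had no classical analogue.
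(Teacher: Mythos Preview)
Your approach is the same as the paper's --- both reduce to Tangora's classical computation \cite{T} via Proposition \ref{prop:may-compare}, and the paper's entire proof is that one-line citation. Your more explicit treatment of the weight balance and the exclusion of exotic differentials is a reasonable elaboration of what the paper leaves implicit.

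There is, however, a genuine error in your weight verification. Theorem \ref{thm:Chow-0} describes an isomorphism from $\Ext_{\cl}$ onto the Chow-degree-zero part of motivic $\Ext$, under which classical $h_{i,j}$ corresponds to motivic $h_{i,j+1}$ (see its proof); in particular, the image of classical $P^8$ under this map is \emph{not} motivic $P^8$, and the formula $(s+f)/2$ computes the weight of that image, not the weight of motivic $P^8$. Indeed, motivic $P^8$ has internal Chow degree $s+f-2w = 64+32-64 = 32$, so it does not even lie in the subalgebra described by Theorem \ref{thm:Chow-0}. The correct weight check is simply to read off degrees directly: from Table 8, $P^8$ has $(m,s,f,w) = (64,64,32,32)$, and $h_0^{32} h_6$ has weight $32 \cdot 0 + 32 = 32$ since $h_0$ has weight $0$ and $h_6$ has weight $32$. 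Both weights are $32$, so indeed no $\tau$ is required. Your conclusion is right, and your two applications of the wrong formula happened to agree with each other (both giving $48$), but the argument as written does not justify it.
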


\begin{proof}
Compare to the classical case \cite{T}.
\end{proof}


\subsection{The $E_\infty$-term}
There are no more differentials to consider in our range,
and we have determined $E_\infty$.  

The multiplicative generators for the $E_\infty$-term 
through the geometric 70-stem break into two groups, but
in a different way than before.
The first group consists of generators that are still 
multiplicative generators in $\Ext$ 
after hidden extensions have been considered;
these are listed in Table 14.
The second group consists of multiplicative generators of
$E_\infty$ that become decomposable in $\Ext$ because
of a hidden extension;
these are listed in Table 9.

Before ending this section, we establish that the
multiplicative generator $P D_4$ of the $E_\infty$-term
becomes decomposable in $\Ext$ by a hidden extension.
This will be needed later in Lemma \ref{lem:h0-gr}(2)
to establish a hidden $h_0$ extension.

\begin{lemma}
\label{lem:Q2-c0}
$c_0 \cdot Q_2 = P D_4$.
\end{lemma}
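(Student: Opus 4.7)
The plan is to establish this hidden multiplicative extension using a Massey product argument in the spirit of the proof of Lemma \ref{lem:d6-c0g^3}, which already produced the analogous relation $c_0 \cdot i_1 = h_1^4 D_4$. The same general technology should apply: find a Massey product presentation that puts $c_0$ in the first slot, then shuffle to exhibit $c_0$-divisibility, and finally identify the quotient by a degree argument.

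First, I would exhibit $P D_4$ (or an appropriate $h_1$-multiple such as $h_1^k P D_4$) as a Massey product whose first entry is $c_0$. The containment $h_1^2 D_4 \in \langle c_0, h_4^2, h_3, h_1^3, h_1 h_3 \rangle$ established inside Lemma \ref{lem:d6-c0g^3} is a natural starting point. After premultiplying by $P$ (a permanent cycle) and using that multiplication by permanent cycles slides inside a Massey product modulo indeterminacy, one obtains a bracket for $P h_1^2 D_4$, from which a bracket for $P D_4$ itself should be extracted once the $h_1$-action is accounted for.

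Next, I would apply the standard shuffling relation to move $c_0$ to the front of the bracket, obtaining a factorization $P D_4 = c_0 \cdot \langle \cdots \rangle$ that exposes the hidden $c_0$-divisibility. The quotient bracket is then identified with $Q_2$ by a degree count: $P D_4$ and $c_0 \cdot Q_2$ lie in the same tridegree, and the data in Table 14 shows that, in the tridegree of the quotient, $Q_2$ is the only available element. This forces $\langle \cdots \rangle = Q_2$, modulo a finite check on indeterminacy.

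The main obstacle is the Massey product bookkeeping. For the higher-fold bracket to be defined, all relevant threefold and fourfold subbrackets must vanish strictly on the May $E_2$-page (not merely modulo indeterminacy), and this must be verified by direct computation with $d_2$ on the $E_1$-page, exactly as in Lemma \ref{lem:d6-c0g^3}. One must then also control the indeterminacy of the shuffled quotient, so that the right-hand side is exactly $Q_2$ rather than $Q_2$ plus an extraneous term in the same tridegree. A sanity check is that the analogous relation should already hold classically, and so is visible in the machine data of \cite{Br1}; confirming this classical version is what guarantees, via Theorem \ref{thm:Chow-0} applied to the ambient degrees, that no spurious additional terms can appear in the motivic quotient.
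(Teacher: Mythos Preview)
Your proposal has a genuine gap at the step where $P$ enters the picture. You write that you will ``premultiply by $P$'' the five-fold bracket for $h_1^2 D_4$ and then ``extract a bracket for $P D_4$ itself once the $h_1$-action is accounted for.'' But multiplying the relation $h_1^4 D_4 = c_0 \cdot i_1$ (which is what the shuffle in Lemma~\ref{lem:d6-c0g^3} actually gives) by $P$ only yields $P h_1^4 D_4 = c_0 \cdot P i_1$. To conclude $c_0 \cdot Q_2 = P D_4$ from this, you would still need to know that $P i_1 = h_1^4 Q_2$, and that is precisely the nontrivial content; it does not follow from a degree count, and you cannot simply divide a bracket identity by $h_1^2$ or $h_1^4$.

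The paper's argument supplies exactly this missing link, but by a different route. It computes two separate four-fold brackets,
\[
h_1 Q_2 = \langle h_4,\, h_1^2 h_4,\, h_4,\, P h_1 \rangle
\quad\text{and}\quad
i_1 = \langle h_1^4,\, h_4,\, h_1^2 h_4,\, h_4 \rangle,
\]
using the May differentials $d_4(\nu_1) = h_1^2 h_4^2$, $d_4(\Delta h_1) = P h_1 h_4$, and $d_4(g) = h_1^4 h_4$. These are arranged so that a single shuffle gives $h_1^4 \cdot h_1 Q_2 = i_1 \cdot P h_1$, i.e.\ $h_1^5 Q_2 = P h_1 \cdot i_1$. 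Only then does one multiply by $c_0$ and invoke $c_0 \cdot i_1 = h_1^4 D_4$ to obtain $h_1^5 c_0 Q_2 = P h_1^5 D_4$, hence $c_0 Q_2 = P D_4$. Note also that in the target tridegree $(65,10,35)$ there is a second element $\tau B_{23}$, so even after proving $c_0 Q_2$ is nonzero you must rule out $c_0 Q_2 = P D_4 + \tau B_{23}$; the paper does this by comparison to the classical case.
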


\begin{proof}
First, compute that $h_1 Q_2 = \langle h_4, h_1^2 h_4, h_4, P h_1 \rangle$
with no indeterminacy;
use the May differentials $d_4 (\nu_1) = h_1^2 h_4^2$ and
$d_4 ( \Delta h_1 ) = P h_1 h_4$.  Beware that the subbracket
$\langle h_1^2 h_4, h_4, P h_1 \rangle$ is not strictly zero.

Next, compute that
$i_1 = \langle h_1^4, h_4, h_1^2 h_4, h_4 \rangle$ 
with no indeterminacy;
use the May differentials $d_4( g) = h_1^4 h_4$ and
$d_4 (\nu_1) = h_1^2 h_4^2$.
Beware that the subbracket
$\langle h_1^4, h_4, h_1^2 h_4\rangle$ is not strictly zero.

Now the shuffle
\[
h_1^4 \langle h_4, h_1^2 h_4, h_4, P h_1 \rangle =
\langle h_1^4, h_4, h_1^2 h_4, h_4 \rangle P h_1
\]
implies that $h_1^5 Q_2 = P h_1 \cdot i_1$.

Recall the hidden extension $c_0 \cdot i_1 = h_1^4 D_4$
from the proof of Lemma \ref{lem:d6-c0g^3}.
This implies that $h_1^5 c_0 Q_2 = P h_1^5 D_4$.
The desired relation now follows.
Note that $c_0 Q_2$ cannot equal
$\tau B_{23} + P D_4$ by comparison to the classical
case \cite{Br1}.
\end{proof}

\section{Hidden extensions}
\label{sctn:hidden-extn}

In order to pass from the $E_\infty$-term to $\Ext$,
we must resolve some hidden extensions.
In this section, we deal with all possible hidden extensions
by $\tau$, $h_0$, $h_1$, and $h_2$.
We will use several different tools, including:
\begin{enumerate}
\item
Classical hidden extensions \cite{Br1}.
\item
Shuffle relations with Massey products.
\item
Squaring operations in the sense of \cite{Ma2}.
\item
Theorem \ref{thm:Chow-0} for hidden extensions among
elements whose Chow degrees are zero.
\end{enumerate}

%


%

\subsection{Hidden $\tau$ extensions}
\label{subsctn:t-hidden}

By exhaustive search, the following results give all of the 
hidden $\tau$ extensions.

\begin{prop}
Table 10 lists all of the hidden
$\tau$ extensions through the geometric 70-stem.
\end{prop}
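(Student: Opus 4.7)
The plan is to reduce the statement to a finite check, then dispatch each case with one of the four tools listed at the start of Section~\ref{sctn:hidden-extn}. A hidden $\tau$ extension on an element $x$ of tridegree $(s,f,w)$ consists of a class $y$ in $\Ext^{s,f,w-1}$ of strictly higher May filtration than $x$ such that $\tau\cdot x=y$ in $\Ext$, even though the product vanishes on $E_\infty$. Since $\tau$ is multiplicative, it suffices, via the Leibniz rule, to analyze $\tau\cdot x$ for a set of $\M_2$-module generators of $E_\infty$ through the 70-stem; this is a finite list readable from Tables 1--5 and the surviving classes enumerated alongside Table~14.

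First, for each such $x$ with $\tau x = 0$ on $E_\infty$, I would enumerate the finitely many elements of $E_\infty^{s,f,w-1}$ of May filtration strictly greater than that of $x$; these are the candidate targets, and any target not on this list is automatically ruled out. Second, I would prune the candidate list by general principles: Proposition~\ref{prop:compare-ext} forces $\tau$-inversion to agree with the classical answer, so any $y$ that is killed by some power of $\tau$ must be matched by a classical relation; Theorem~\ref{thm:Chow-0} rules out any hidden $\tau$ extension among Chow degree zero classes, since those classes are a faithful copy of $\Ext_{\cl}$ where $\tau$ does not appear; and the Leibniz rule together with already-established hidden extensions on generators propagates many forbiddings to their multiples.

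Third, I would resolve the finitely many surviving candidates individually. Positive extensions in Table 10 are established by one of three mechanisms. The primary mechanism is a Massey product shuffle: writing $x=\langle a,b,c\rangle$ with $a$ divisible by $\tau$, one gets $\tau\cdot x\in\langle\tau^{-1}a\cdot\tau, b, c\rangle$ and identifies the target after controlling indeterminacy, as in the template worked out for $\tau\cdot\tau h_2 g^2=Ph_1^4 h_5$ in Example~\ref{ex:h2g^2}. The second mechanism is a squaring operation $\Sq^i$ in the sense of \cite{Ma2} applied to a lower-stem hidden extension already on the list. The third mechanism is to import a classical hidden extension from \cite{Br1} and lift it to the motivic setting through Theorem~\ref{thm:Chow-0} whenever the source and target both lie in Chow degree zero. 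Conversely, a candidate target is eliminated by exhibiting it as a product that does not involve $\tau x$, or by contradicting a May differential already recorded in Tables 3--5.

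The main obstacle is not conceptual but combinatorial: one must exhibit, for every generator and every candidate target, a witness in the above list, and ensure no case is missed. The subtle cases concentrate around elements divisible by high powers of $g$ (compare Examples~\ref{ex:h1^7h5c0} and~\ref{ex:g^2}), where several candidate targets of comparable May filtration coexist and Massey product indeterminacies are nontrivial; there squaring operations, rather than direct shuffles, typically close the argument. Once each generator is settled, the Leibniz rule extends the conclusion to all of $E_\infty$, showing both that every entry in Table 10 is genuine and that the table is complete.
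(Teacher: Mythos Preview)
Your overall strategy---exhaustive enumeration of candidate targets followed by case-by-case resolution via classical comparison, Massey shuffles, and squaring operations---is exactly what the paper does; the paper's proof of this proposition is itself only a summary that defers the subtle cases to the lemmas that follow.  However, several of your specific mechanism descriptions are garbled in ways that would cause trouble if you tried to execute them.

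First, your Massey shuffle template is nonsensical as written: ``$\tau\cdot x\in\langle\tau^{-1}a\cdot\tau,b,c\rangle$'' just reduces to $\langle a,b,c\rangle=x$.  The actual shuffle used (e.g.\ for $\tau\cdot Pu'$) is to write $x=\langle a,b,c\rangle$ and compute a \emph{separate} bracket $\langle\tau,a,b\rangle$, then use the shuffle $\tau\langle a,b,c\rangle=\langle\tau,a,b\rangle c$.  Second, the example you cite for the shuffle mechanism, $\tau\cdot\tau h_2 g^2=Ph_1^4h_5$, is in fact established by applying $\Sq^4$ to a relation, not by a shuffle.  Third, importing classical hidden extensions does \emph{not} go through Theorem~\ref{thm:Chow-0}: the source and target of a $\tau$ extension differ in Chow degree by~$2$, so they cannot both lie in Chow degree zero.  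The correct mechanism is Proposition~\ref{prop:compare-ext}: a classical hidden relation such as $h_0\cdot e_0 g=h_0^4 x$ forces the same relation after inverting $\tau$, hence $\tau^2\cdot h_0 e_0 g=h_0^4 x$ motivically.  Finally, Theorem~\ref{thm:Chow-0} plays no role in the $\tau$-extension analysis for this reason; it is used elsewhere in the section for $h_1$ extensions.
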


\begin{proof}
Many of the extensions follow by comparison to the classical case.
For example, there is a classical hidden extension
$h_0 \cdot e_0 g = h_0^4 x$.  This implies that
$\tau^2 \cdot h_0 e_0 g = h_0^4 x$ motivically.

Proofs for the more subtle cases are given below.
\end{proof}

\begin{lemma}
\label{lem:t^2-h2g^2}
\mbox{}
\begin{enumerate}
\item
$\tau \cdot \tau h_2 g^2 = P h_1^4 h_5$.
\item
$\tau \cdot \tau h_0 g^3 = P h_1^4 h_5 e_0$.
\end{enumerate}
\end{lemma}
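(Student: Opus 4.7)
The plan is to derive (2) as a consequence of (1) using the identity $h_2 e_0 = h_0 g$, and to prove (1) via a Massey product shuffle building on the May relation $\tau^2 h_2 g = P h_4$.

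For (2), the motivic identity $h_2 e_0 = h_0 g$ holds in $\Ext$: both sides land in tridegree $(20,5,12)$ with weights balanced ($h_2$, $e_0$, $g$ have weights $2$, $10$, $12$), so no $\tau$-adjustment is needed, and the identity is inherited from the well-known classical hidden extension by comparison to $\Ext_{\cl}$. Multiplying both sides of (1) by $e_0$ gives $\tau^2 h_2 g^2 \cdot e_0 = P h_1^4 h_5 e_0$, and rewriting $h_2 g^2 \cdot e_0 = h_2 e_0 \cdot g^2 = h_0 g \cdot g^2 = h_0 g^3$ on the left side yields $\tau^2 h_0 g^3 = P h_1^4 h_5 e_0$, which is exactly (2). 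So only (1) requires a genuine argument.

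For (1), the key input is the May relation $\tau^2 h_2 g = P h_4$ recorded in Example \ref{ex:h2g^2}, which holds at the $E_\infty$-page with no hidden correction. My plan is to multiply this relation by $g$ to obtain a preliminary identity $\tau^2 h_2 g^2 = P h_4 g$ in $E_\infty$, and then use the May differentials $d_4(g) = h_1^4 h_4$ (Lemma \ref{lem:d4-g}) and $d_8(g^2) = h_1^8 h_5$ to convert the right side into $P h_1^4 h_5$. Concretely, the differential $d_4(g) = h_1^4 h_4$ translates, in $\Ext$, into a Massey-product representation of (a suitable lift of) $g$ whose entries include $h_4$ and $h_1^4$; shuffling the product $P h_4 \cdot g$ through that bracket produces a class divisible by $P h_1^4$, and by a direct enumeration of classes in tridegree $(43,9,24)$, the only possibility is $P h_1^4 h_5$. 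This is entirely parallel to the shuffle argument used in the proof of Lemma \ref{lem:Q2-c0}.

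The main obstacle is that $g$ and $g^2$ are not honest $\Ext$ classes — only $\tau g$ and $\tau g^2$ lift from $E_\infty$, per Example \ref{ex:g^2} — so every step of the Massey-product manipulation must be carried out with the correct $\tau$-divisibility to yield a genuine identity in $\Ext$ rather than just in some auxiliary page. Controlling the indeterminacy of the several threefold brackets involved is a secondary difficulty; following the template of the proofs of Lemmas \ref{lem:d6-c0g^3} and \ref{lem:Q2-c0}, I would enumerate the few contributors in the relevant tridegrees and verify that the corresponding subbrackets are strictly zero or absorbed by the ambient indeterminacy. A final consistency check against the Chow-degree-zero part of $\Ext$ via Theorem \ref{thm:Chow-0} confirms the identity as stated.
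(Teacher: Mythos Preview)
Your derivation of (2) from (1) via the relation $h_2 e_0 = h_0 g$ is correct and matches the paper exactly (phrased there as ``multiply the first hidden extension by $e_0$'').

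For (1), however, there is a genuine gap, and the paper takes a completely different route. The relation $\tau^2 h_2 g = P h_4$ is not an independent input you can leverage: it is essentially the \emph{definition} of the symbol $P h_4$ in $\Ext$, coming from $d_4(\Delta) = \tau^2 h_2 g + P h_4$. Multiplying this tautology by $g$ --- or by $\tau g$, since $g$ itself does not exist in $\Ext$ --- produces no new information unless you can independently identify $P h_4 \cdot \tau g$ with $\tau P h_1^4 h_5$, which is precisely the content of the lemma up to a factor of $\tau$. Your proposed Massey-product shuffle from $d_4(g) = h_1^4 h_4$ is never made concrete: you would need to exhibit a specific bracket containing $\tau^2 h_2 g^2$ whose other description forces the value $P h_1^4 h_5$, and you would still have to cancel the extra $\tau$ you picked up by working with $\tau g$. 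The enumeration in tridegree $(43,9,24)$ only helps once you already know the class lands there and is nonzero.

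The paper's proof avoids all of this by using squaring operations: starting from the relation $h_1 \cdot \tau g + h_2 f_0 = 0$ in $\Ext$ and applying $\Sq^4$, together with the input $\Sq^3(\tau g) = P h_1^2 h_5$ from \cite{Br2}, the Cartan formula produces (1) directly. Squaring operations supply exactly the external identification that your shuffle argument lacks.
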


\begin{proof}
Start with the relation $h_1 \cdot \tau g + h_2 f_0 = 0$,
and apply the squaring operation $\Sq^4$.
One needs that $\Sq^3 ( \tau g ) = P h_1^2 h_5$ \cite{Br2}.
The result is the first hidden extension.

For the second, multiply the first hidden extension by $e_0$.
\end{proof}

\begin{lemma}
\label{lem:t-B8}
\mbox{}
\begin{enumerate}
\item
$\tau \cdot B_8 = P h_5 d_0$.
\item
$\tau \cdot h_1^2 B_{21} = P h_5 c_0 d_0$.
\item
$\tau \cdot B_8 d_0 = h_0^4 X_3$.
\end{enumerate}
\end{lemma}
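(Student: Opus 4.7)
The plan is to reduce all three parts to part (1) and then prove part (1) using the squaring operation technique already employed in Lemma \ref{lem:t^2-h2g^2}.

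For part (1), I would look for a low-filtration relation whose image under an appropriate squaring operation $\Sq^k$ lands in the stem of $B_8$. Following the model of Lemma \ref{lem:t^2-h2g^2}, the idea is to take an equation of the form $h_1 \cdot u + v \cdot w = 0$ (or a similar annihilation) in the cohomology of the motivic Steenrod algebra such that the individual Steenrod squares of its terms are computable. Applying $\Sq^k$ then produces, via the Cartan formula and the relation $\Sq^0 h_i = h_{i+1}$, a new relation that encodes a multiplication by $\tau$ coming from the motivic grading discrepancy. The specific classical values of the squaring operations on the relevant low-stem elements can be read off from Bruner's machine computation \cite{Br2}, and the only term that cannot be accounted for on the $E_\infty$-page is the hidden $\tau$ extension $\tau \cdot B_8 = P h_5 d_0$. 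In practice, $P h_5 d_0$ is the natural candidate for $\Sq^j$ of something like $P h_4 c_0$ or a related element in the 26-stem, so I would first identify this preimage.

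Once part (1) is established, part (2) follows from a multiplicative relation between $B_8$ and $B_{21}$ that is visible on $E_\infty$ (or arises as an easy hidden product), namely $h_1^2 B_{21} = c_0 \cdot B_8$. Multiplying part (1) by $c_0$ gives
\[
\tau \cdot c_0 B_8 = c_0 \cdot P h_5 d_0 = P h_5 c_0 d_0,
\]
and then substituting $c_0 B_8 = h_1^2 B_{21}$ yields part (2). If no such multiplicative relation is already recorded, I would verify it by a short Massey product shuffle in the $E_\infty$-page, since both $B_8$ and $B_{21}$ are expressible as Massey products in the May filtration.

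For part (3), multiply part (1) by $d_0$ to obtain $\tau \cdot B_8 d_0 = P h_5 d_0^2$. Then invoke the classical relation $P h_5 d_0^2 = h_0^4 X_3$ recorded in \cite{Br1}; by Proposition \ref{prop:compare-ext}, this relation lifts to the motivic setting, and weight bookkeeping (both sides have Chow degree zero) shows no additional power of $\tau$ is required. Combining these two equalities gives part (3).

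The main obstacle is part (1): choosing the right starting relation and the right squaring operation, and then verifying the classical squaring values. Parts (2) and (3) are essentially multiplicative corollaries once (1) is in place and the relevant classical input from \cite{Br1, Br2} has been identified.
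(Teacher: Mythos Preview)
Your reductions for parts (2) and (3) are essentially the same as the paper's: multiply part (1) by $c_0$ and by $d_0$ respectively, using the May $E_\infty$-page relations $c_0 B_8 = h_1^2 B_{21}$ and $P h_5 d_0^2 = h_0^4 X_3$. (A small correction: the latter relation is already visible on the May $E_\infty$-page, so you do not need to invoke \cite{Br1} or a Chow-degree argument there.)

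The gap is in part (1). You have not actually produced an argument; you have only announced the intention to ``look for a low-filtration relation whose image under an appropriate squaring operation $\Sq^k$ lands in the stem of $B_8$'', and you speculate that $P h_5 d_0$ might be $\Sq^j$ of ``something like $P h_4 c_0$''. None of this is pinned down, and in fact the squaring-operation route is not obviously available here: you would need to exhibit a specific relation in roughly half the stem whose $\Sq^k$ produces exactly the hidden $\tau$ extension you want, and you have not done so.

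The paper's argument for part (1) is quite different and entirely concrete. It uses the classical hidden extension $c_0 \cdot B_1 = P h_1 h_5 d_0$ from \cite{Br1}, together with the motivic (non-hidden) relation $c_0 \cdot B_1 = h_1 B_8$ read off the May $E_\infty$-page. Combining these gives $\tau \cdot h_1 B_8 = P h_1 h_5 d_0$, whence $\tau \cdot B_8 = P h_5 d_0$. So the key input is a known classical product relation involving $B_1$, not a squaring operation.
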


\begin{proof}
There is a classical hidden extension
$c_0 \cdot B_1 = P h_1 h_5 d_0$ \cite{Br1}.
Motivically, there is a non-hidden relation
$c_0 \cdot B_1 = h_1 B_8$.
It follows that
$\tau \cdot h_1 B_8 = P h_1 h_5 d_0$ motivically.

For the second hidden extension, multiply the
first hidden extension by $c_0$.
Note that $c_0 B_8 = h_1^2 B_{21}$ is
detected in the $E_\infty$-term of the May spectral sequence.

For the third hidden extension, 
multiply the first hidden extension by $d_0$,
and observe that 
$P h_5 d_0^2 = h_0^4 X_3$,
which is detected in the $E_\infty$-term of the May spectral 
sequence.
\end{proof}

\begin{lemma}
\mbox{}
\begin{enumerate}
\item
$\tau \cdot P u' = h_0^5 R_1$.
\item
$\tau \cdot P^2 u' = h_0^9 R$.
\item
$\tau \cdot P^3 u' = h_0^6 R_1'$.
\end{enumerate}
\end{lemma}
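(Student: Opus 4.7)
The plan is to imitate the strategy of Lemma \ref{lem:t-B8}: for each of the three parts, locate a classical hidden extension in Bruner's tables \cite{Br1} whose right-hand side is precisely $h_0^5 R_1$, $h_0^9 R$, or $h_0^6 R_1'$, and then identify a non-hidden motivic relation that expresses the classical source as a motivic multiple of $\tau \cdot P^k u'$ for the appropriate $k$. The $\tau$ factor must arise from the weight discrepancy between the classical hidden extension and its motivic lift, exactly as in the proofs of Lemmas \ref{lem:t^2-h2g^2} and \ref{lem:t-B8}.

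For Part (1), I would first search the classical data for a hidden extension whose target is $h_0^5 R_1$, and match its source to a product $\alpha \cdot \beta$ that coincides in the motivic $E_\infty$-term of the May spectral sequence with $\gamma \cdot \tau P u'$ for some auxiliary factor $\gamma$ (such as $h_1$ or $c_0$). Cancelling $\gamma$, either by injectivity of multiplication in the relevant tridegree or by direct inspection of $\Ext$, then yields the desired hidden extension. For Parts (2) and (3), the most efficient approach is to multiply the relation of Part (1) successively by $P$, and then rewrite $P \cdot h_0^5 R_1$ as $h_0^9 R$ and $P^2 \cdot h_0^5 R_1$ as $h_0^6 R_1'$ using classical $h_0$-linear identities among $P R_1$, $R$, $P R$, and $R_1'$ recorded in \cite{Br1}. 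Alternatively, each of Parts (2) and (3) can be handled independently by the same comparison recipe used for Part (1), which is a useful cross-check.

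The main obstacle will be pinning down the precise powers of $h_0$ on the right-hand sides and verifying that the $P$-propagation identities hold in the correct form. The elements $R$, $R_1$, $R_1'$ sit in densely populated $h_0$-towers in the upper 60-stem range, where the $E_\infty$-page of the motivic May spectral sequence is crowded and several $h_0$-extensions are themselves hidden; tracking the exponents therefore requires direct consultation with the machine data and careful bookkeeping of the May filtrations of the relevant products.
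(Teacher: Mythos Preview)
Your proposal has the right general shape but is missing the specific mechanism that makes the argument work, and the propagation idea in Parts (2) and (3) has a real gap.

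The paper's proof does not search for a classical hidden extension with target $h_0^5 R_1$ and then match it to a motivic product. Instead, it identifies the Massey product
\[
Q' = \langle \tau, u', h_0^3 \rangle
\]
(computed from the May $d_2$ differential) together with $P u' = \langle u', h_0^3, h_0 h_3 \rangle$ (from the May $d_4$ differential). A shuffle then gives
\[
\tau \cdot P u' \;=\; \langle \tau, u', h_0^3 \rangle \cdot h_0 h_3 \;=\; h_0 h_3 \cdot Q',
\]
and the classical hidden extension $h_3 \cdot Q' = h_0^4 R_1$ from \cite{Br1} finishes Part (1). Parts (2) and (3) repeat the same shuffle with $h_0^5 h_4$ and $h_0^3 i$ in place of $h_0 h_3$, using the May differentials $d_8(P^2) = h_0^8 h_4$ and $d_4(P^3) = h_0^6 i$, together with the classical hidden extensions $h_4 \cdot Q' = h_0^4 R$ and $i \cdot Q' = h_0^3 R_1'$. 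The element $Q'$ is the pivot you are missing; without it, ``search Bruner's tables for a hidden extension landing on $h_0^5 R_1$'' is not a proof but a hope.

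Your propagation idea, ``multiply Part (1) by $P$,'' does not work as stated: $P$ is not an element of $\Ext$ (it supports $d_4(P) = h_0^4 h_3$), so there is no literal multiplication by $P$ available. The elements $P^k u'$ are individual multiplicative generators of the May $E_\infty$-page, and each must be expressed as a Massey product $\langle u', h_0^3, - \rangle$ separately, which is exactly what the paper does. The relationships $P \cdot h_0^5 R_1 = h_0^9 R$ and $P^2 \cdot h_0^5 R_1 = h_0^6 R_1'$ that you propose to invoke are not straightforward identities in $\Ext$ for the same reason.
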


\begin{proof}
First, use the May $d_4$ differential to compute that 
$P u' = \langle u', h_0^3, h_0 h_3 \rangle$,
with no indeterminacy.  
Next, use the May $d_2$ differential to compute that
$\langle \tau, u', h_0^3 \rangle = Q'$,
again with no indeterminacy.

Use a shuffle to get that
$\tau \cdot P u' = h_0 h_3 Q'$.
Finally, there is a classical hidden extension 
$h_3 \cdot Q' = h_0^4 R_1$ \cite{Br1}, which implies that the same
formula holds motivically.

The argument for the second hidden extension is similar,
using the shuffle
\[
\tau \cdot P^2 u' = 
\tau \langle u', h_0^3, h_0^5 h_4 \rangle =
\langle \tau, u', h_0^3 \rangle h_0^5 h_4 =
h_0^5 h_4 Q'.
\]
The first equality comes from the May differential
$d_8 (P^2) = h_0^8 h_4$.
Also, we need the classical hidden
extension $h_4 \cdot Q' = h_0^4 R$ \cite{Br1}, which implies that the
same formula holds motivically.

The argument for the third hidden extension is also similar,
using the shuffle
\[
\tau \cdot P^3 u' =
\tau \langle u', h_0^3, h_0^3 i \rangle =
\langle \tau, u', h_0^3 \rangle h_0^3 i =
h_0^3 i Q'.
\]
The first equality comes from the May differential
$d_4 ( P^3 ) = h_0^6 i$.
Also, we need the classical hidden extension 
$i \cdot Q' = h_0^3 R_1'$ \cite{Br1}.
\end{proof}

\begin{lemma}
$\tau \cdot k_1 = h_2 h_5 n$.
\end{lemma}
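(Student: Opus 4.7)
The plan is to follow the same template as the preceding lemmas in Section~\ref{subsctn:t-hidden}: realize $k_1$ as a Massey product (with controlled or no indeterminacy), use a shuffle to move $\tau$ inside, and then identify the resulting factor via a known classical hidden extension.

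First, I would look for a bracket presentation of $k_1$ of the form $k_1 = \langle a, b, c\rangle$, where the factors $a,b,c$ live in $\Ext$ in the appropriate tridegrees. The natural candidates come from the May $d_2$ or $d_4$ differentials that produce $k_1$ on the $E_\infty$-page; one identifies a null cochain-level bracket whose Massey product represents $k_1$. I would verify strict vanishing of the relevant subbrackets (and triviality of indeterminacy) by a direct May spectral sequence computation, exactly as is done for $P u'$, $P^2 u'$, and $P^3 u'$ in the preceding lemmas. This bookkeeping step is where most of the work lies.

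Next I would shuffle, using an identity of the form
\[
\tau \cdot \langle a, b, c\rangle = \langle \tau, a, b\rangle \cdot c,
\]
so that $\tau k_1$ factors as $\langle \tau, a, b\rangle \cdot c$. The inner bracket $\langle \tau, a, b\rangle$ should be computable directly from a suitable May $d_r$ differential (this is the analogue of the computation $\langle \tau, u', h_0^3\rangle = Q'$ used earlier). Once that inner bracket is identified as some explicit element $\xi$, the equation reduces to a product $\xi \cdot c$ in $\Ext$.

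Finally, I would identify $\xi \cdot c$ with $h_2 h_5 n$ by appealing to a classical hidden extension from Bruner's machine computations~\cite{Br1}, in the same way that the previous lemmas invoke $h_3 \cdot Q' = h_0^4 R_1$, $h_4 \cdot Q' = h_0^4 R$, and $i \cdot Q' = h_0^3 R_1'$. The main obstacle will be choosing the correct bracket presentation for $k_1$: there may be several plausible Massey products representing $k_1$, and only one is likely to shuffle into an expression whose factors match an existing classical hidden extension involving $h_5 n$. Once the correct bracket is identified, the remaining verification is a routine May spectral sequence calculation combined with an appeal to~\cite{Br1}.
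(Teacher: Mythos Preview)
Your template mirrors the preceding $\tau$-extension proofs, but there is a concrete obstruction to running it here. The shuffle identity $\tau\cdot\langle a,b,c\rangle=\langle\tau,a,b\rangle\cdot c$ requires $\tau a=0$, and the natural bracket presentation of $k_1$ coming from the May differential $d_4(\nu_1)=h_1^2h_4^2$ is $k_1=\langle d_1,h_4,h_1^2h_4\rangle$, none of whose entries are $\tau$-torsion. So there is no obvious way to insert $\tau$ into the bracket by a shuffle. In the $u'$ family you are modeling on, the key fact that makes the shuffle work is precisely that $\tau u'=0$; nothing analogous is available for the factors appearing in $k_1$.

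The paper's proof takes a genuinely different route, using the squaring operation $\Sq^0$ rather than a $\tau$-shuffle. One first writes the classical element $k$ as $\langle d_0,h_3,h_0^2h_3\rangle$ (from $d_4(\nu)=h_0^2h_3^2$). Applying $\Sq^0$ and using Milgram's compatibility of $\Sq^0$ with Massey products \cite{Mi} gives $\Sq^0 k=\langle\tau^2 d_1,h_4,\tau^2 h_1^2 h_4\rangle$, since $\Sq^0 d_0=\tau^2 d_1$, $\Sq^0 h_3=h_4$, and $\Sq^0(h_0^2h_3)=\tau^2 h_1^2 h_4$. On the other hand, the classical value of $\Sq^0 k$ is $\tau^3 h_2 h_5 n$ by \cite{Br2}. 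Since $k_1=\langle d_1,h_4,h_1^2h_4\rangle$, pulling the $\tau$'s out yields $\tau^4 k_1=\tau^3 h_2 h_5 n$, and hence $\tau k_1=h_2 h_5 n$. The moral is that $\Sq^0$ is the mechanism that introduces the needed powers of $\tau$; a direct shuffle does not.
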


\begin{proof}
First, use the May differential $d_4(\nu) = h_0^2 h_3^2$ to compute
$k = \langle d_0, h_3, h_0^2 h_3 \rangle$,
with no indeterminacy.  It follows from \cite{Mi} that
$\Sq^0 k = 
\langle \Sq^0 d_0, \Sq^0 h_3, \Sq^0 h_0^2 h_3 \rangle$,
with no indeterminacy.
In other words, 
$\Sq^0 k  = \langle \tau^2 d_1, h_4, \tau^2 h_1^2 h_4 \rangle$.
From the classical calculation \cite{Br2},
$\Sq^0 k$ also equals $\tau^3 h_2 h_5 n$.

On the other hand, the May differential 
$d_4 ( \nu_1 ) = h_1^2 h_4^2$ implies that 
$k_1 = \langle d_1, h_4, h_1^2 h_4 \rangle$,
with no indeterminacy.

This shows that $\tau^4 \cdot k_1 = \tau^3 h_2 h_5 n$
in $\Ext$, from which it follows that
$\tau \cdot k_1 = h_2 h_5 n$.
\end{proof}

%

\begin{remark}
In the 46-stem, $\tau \cdot u'$ does not equal
$\tau^2 d_0 l$.  This follows by comparison to 
$\Ext_{A(2)}$ \cite{I1}.
Similarly, in the 49-stem,
$\tau \cdot v'$ does not equal $\tau^2 e_0 l$.
\end{remark}

%

\subsection{Hidden $h_0$ extensions}
\label{subsctn:h0-hidden}

By exhaustive search, the following results give all of the
hidden $h_0$ extensions.

\begin{prop}
Table 11 lists all of the hidden
$h_0$ extensions through the geometric 70-stem.
\end{prop}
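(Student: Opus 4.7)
The plan is to mirror the $\tau$-extension argument of Section \ref{subsctn:t-hidden}. First, I would catalog every candidate hidden $h_0$ extension by running through the multiplicative generators and their products (via Tables 3--5 and Table 14), restricted to the geometric 70-stem, and for each surviving element $x \in E_\infty$ computing $h_0 x$ on $E_\infty$. A hidden extension is possible whenever the bidegree of the putative $h_0 x$ admits a non-zero target of strictly higher May filtration than what is detected on $E_\infty$. Weight, Chow degree, and the already-known $E_\infty$-multiplicative structure cut this list dramatically.

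Second, I would resolve each surviving candidate using the four tools listed at the start of Section \ref{sctn:hidden-extn}. For candidates in Chow degree zero, Theorem \ref{thm:Chow-0} combined with Bruner's classical machine data immediately transports every classical hidden $h_0$ extension into its motivic counterpart, accounting for the majority of the table. For the remaining purely motivic candidates, shuffle identities among threefold Massey products give the answer, after first evaluating the relevant brackets by reading off the May $d_2$, $d_4$, and $d_6$ differentials and controlling indeterminacy exactly as in the proofs of Lemma \ref{lem:Q2-c0} and the hidden $\tau$-extensions above. Squaring operations in the sense of \cite{Ma2}, applied to a known lower-filtration relation, deliver further extensions by the method of Lemma \ref{lem:t^2-h2g^2}. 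Finally, the relation $c_0 \cdot Q_2 = P D_4$ established in Lemma \ref{lem:Q2-c0} is the key input for the hidden $h_0$ extension producing $P D_4$, as indicated by the forward reference to Lemma \ref{lem:h0-gr}(2).

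The principal obstacle is the exhaustiveness of the statement: for every candidate that is \emph{not} listed in Table 11, one must certify that the extension is trivial. Near the known problem regions --- the $g^k$-column (compare Examples \ref{ex:h2g^2} and \ref{ex:h1^7h5c0}), and the elements $u'$, $v'$, $i_1$, $D_4$, $Q_2$, $R$, $R_1$, $R_1'$, together with products involving the $h_1$-local wedge elements --- shuffle arguments typically carry non-trivial indeterminacy and classical comparison is unavailable because the Chow degree is strictly positive. In those cases I expect to have to combine a squaring operation with a subsequent $h_1$- or $c_0$-multiplication, or to compare against $\Ext_{A(2)}$ and $\Ext_{A(3)}$ as in the $h_1$-local analysis of Section \ref{sctn:h1-local}, in order to pin down the unique possible target. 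The combinatorial bookkeeping of matching each candidate to exactly one of the four techniques, rather than any single new idea, is where the proof demands the most care.
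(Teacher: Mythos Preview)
Your overall strategy matches the paper's: an exhaustive degree-by-degree search, with most entries settled by comparison to the classical case and the remaining motivic cases handled by Massey product shuffles, squaring operations, and comparison to $\Ext_{A(2)}$.  Two points deserve correction, however.

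First, your appeal to Theorem~\ref{thm:Chow-0} is misplaced for $h_0$ extensions.  Under that isomorphism the classical $h_i$ corresponds to the motivic $h_{i+1}$, so classical hidden $h_0$ extensions transport to motivic hidden $h_1$ extensions in Chow degree zero, not to motivic $h_0$ extensions.  (Equivalently, motivic $h_0$ has Chow degree $1$, so multiplication by $h_0$ never stays inside the Chow-degree-zero subalgebra.)  The correct tool for importing classical $h_0$ extensions such as $h_0\cdot r=s$ is Proposition~\ref{prop:compare-ext}: after inverting $\tau$ the motivic and classical $\Ext$ coincide, so any classical hidden $h_0$ extension between $\tau$-torsion-free classes holds motivically as well.

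Second, you omit a technique the paper uses systematically: several hidden $h_0$ extensions are forced by the hidden $\tau$ extensions already established in Section~\ref{subsctn:t-hidden}.  For instance, from $\tau\cdot P u'=h_0^4 S_1$ and $\tau\cdot(\tau h_0 d_0^2 j)=h_0^5 S_1$ one reads off $h_0\cdot P u'=\tau h_0 d_0^2 j$ immediately, with no bracket manipulation required.  This shortcut handles a number of entries in Table~11 (those involving $u'$, $Pu'$, $P^2u'$, etc.) and should be added to your toolkit before you reach for shuffles or squaring operations.
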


\begin{proof}
Many of the extensions follow by comparison to the classical case.
For example, there is a classical hidden extension
$h_0 \cdot r = s$.  This implies that
$h_0 \cdot r = s$ motivically as well.

Several other extensions are implied by the hidden
$\tau$ extensions established in 
Section \ref{subsctn:t-hidden}.
For example, the extensions $\tau \cdot P u' = h_0^4 S_1$
and $\tau \cdot \tau h_0 d_0^2 j = h_0^5 S_1$ imply
that $h_0 \cdot P u' = \tau h_0 d_0^2 j$.

Proofs for the more subtle cases are given below.
\end{proof}

\begin{lemma}
\mbox{}
\begin{enumerate}
\item
$h_0 \cdot u' = \tau h_0 d_0 l$.
\item
$h_0 \cdot v' = \tau h_0 e_0 l$.
\item
$h_0 \cdot P v' = \tau h_0 d_0^2 k$.
\item
$h_0 \cdot P^2 v' = \tau h_0 d_0^3 i$.
\end{enumerate}
\end{lemma}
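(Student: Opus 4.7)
The plan is to prove each of the four extensions in turn, following the same template used for the hidden $\tau$ extensions in Section~\ref{subsctn:t-hidden}: represent the left-hand side as a Massey product via the May differentials, shuffle the external $h_0$ into the bracket, and identify the resulting class using either a classical hidden extension from \cite{Br1} or one of the hidden $\tau$ extensions already established.

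For item (1), I would first construct a Massey product representation of $u'$ using the May $d_r$ differentials computed earlier in Section~\ref{sctn:May}, verifying as in the proofs of Section~\ref{subsctn:t-hidden} that the indeterminacy either vanishes or is contained in the target. A natural candidate is a three-fold bracket $u' = \langle a, b, c\rangle$ in which $h_0$ or $\tau$ appears internally, so that the shuffle $h_0\cdot\langle a,b,c\rangle = \langle h_0,a,b\rangle c$ (modulo indeterminacy) expresses $h_0 u'$ as a product of two classes of strictly lower complexity. The target $\tau h_0 d_0 l$ is then recognized from that product, most likely by matching it to a class appearing in one of the already-established hidden $\tau$ extensions. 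Note that the Remark at the end of Section~\ref{subsctn:t-hidden} warns that $\tau u'$ is \emph{not} $\tau^2 d_0 l$, so the prefix $\tau h_0$ on the right-hand side must arise from a genuinely hidden $h_0$ action rather than from $\tau$-multiplication on an existing relation. The argument for item (2) is structurally identical, with $d_0$ replaced by $e_0$ and $u'$ replaced by $v'$.

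For items (3) and (4), the cleanest route is to multiply items (1) and (2) by the appropriate powers of the periodicity operator $P$, or to apply the same Massey product approach directly to $Pv'$ and $P^2 v'$. In either case one must identify $P\cdot e_0 l$ with $d_0^2 k$ and $P^2\cdot e_0 l$ with $d_0^3 i$; these identifications are either visible on the $E_\infty$-page via non-hidden $P$-periodicity relations or follow from further hidden extensions that can be checked against the classical calculation. An alternative, which avoids introducing $P$ at this stage, is to express $Pv'$ (respectively $P^2 v'$) directly as a Massey product involving $\tau$ and to shuffle with $h_0$ on the outside, exactly as in the proof of $\tau\cdot P^k u' = \cdots$ in Section~\ref{subsctn:t-hidden}.

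The main obstacle throughout is controlling the indeterminacy of each Massey product shuffle. In every case one must verify that no class in the indeterminacy contributes to the precise geometric stem and Adams filtration of the claimed target; this is the same kind of bookkeeping carried out in the already-given proofs in Section~\ref{subsctn:t-hidden}, and it is ultimately pinned down by comparison with Bruner's classical computations \cite{Br1} of $\Ext_{\cl}$, together with the Chow-degree-zero isomorphism of Theorem~\ref{thm:Chow-0} whenever the relevant classes lie in Chow degree zero.
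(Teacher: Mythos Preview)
Your proposal is a plan rather than a proof: you never actually write down a Massey product for $u'$ or $v'$, never identify the specific May differential that would produce it, and never carry out a single shuffle. Saying ``a natural candidate is a three-fold bracket $u' = \langle a,b,c\rangle$'' without specifying $a,b,c$ is not an argument. The suggestion to ``multiply by $P$'' for parts (3) and (4) is also problematic as stated, since $P$ is not an element of $\Ext$; the periodicity is encoded by other elements and relations, and you would need to say precisely which ones and verify the identifications $P\cdot e_0 l = d_0^2 k$, etc., which you leave as assertions.

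The paper's proof is a single line and uses a completely different idea: all of the classes involved---$u'$, $v'$, $Pv'$, $P^2 v'$, $d_0$, $e_0$, $i$, $k$, $l$, $\tau$, $h_0$---are detected by the map $\Ext \to \Ext_{A(2)}$, and the corresponding hidden $h_0$ extensions are already known in $\Ext_{A(2)}$ from \cite{I1}. This bypasses Massey products entirely. Your template, borrowed from the $\tau$-extension proofs, is not wrong in spirit, but it is the wrong tool here; the $A(2)$ comparison is both simpler and stronger, and you should look for such detection arguments before reaching for shuffles.
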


\begin{proof}
These follow by comparison to $\Ext_{A(2)}$ \cite{I1}.
\end{proof}

\begin{lemma}
\label{lem:hidden-h0h2^2g}
\mbox{}
\begin{enumerate}
\item
$h_0 \cdot h_2^2 g = h_1^3 h_4 c_0$.
\item
$h_0 \cdot h_2^2 g^2 = h_1^7 h_5 c_0$.
\item
$h_0 \cdot h_2^2 g^3 = h_1^9 D_4$.
\end{enumerate}
\end{lemma}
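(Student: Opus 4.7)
All three hidden extensions have the same flavor: $h_0 \cdot h_2^2 g^k$ equals a product of $h_1$-powers with either $c_0 \cdot h_{k+3}$ (for $k=1,2$) or $D_4$ (for $k=3$). My approach is to mimic the Massey product shuffle technique used in the proof of Lemma \ref{lem:d6-c0g^3}, where $c_0 \cdot i_1 = h_1^4 D_4$ was established from a fivefold bracket, and which in fact supplies one of the key inputs for part (3).

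For \textbf{part (1)}, the May differential $d_4(g) = h_1^4 h_4$ (Lemma \ref{lem:d4-g}) forces $h_1^4 h_4 = 0$ in $\Ext$.  Since $h_1 h_2 = 0$, both $h_2^2$ and $h_2^2 g$ sit in Massey-product expressions of the form $\langle h_2^2, h_1^4, h_4 \rangle$ (arising from the standard conversion of May differentials into Massey products, up to any needed $\tau$-correction in the motivic setting).  To bring in the $h_0$ factor I would shuffle: schematically,
\[
h_0 \cdot \langle h_2^2, h_1^4, h_4 \rangle = \langle h_0, h_2^2, h_1^4 \rangle \cdot h_4,
\]
after verifying that $\langle h_0, h_2^2, h_1^4 \rangle$ makes sense --- likely by passing to a fourfold bracket (such as $\langle h_0, h_2, h_2, h_1^4 \rangle$) that handles the nonvanishing of $h_0 h_2^2$, or by invoking the motivic relation $h_0^2 h_2 = \tau h_1^3$.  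The shuffled bracket should then be identified with $h_1^3 c_0$ via the standard Massey product description $c_0 \in \langle h_2, h_1, h_0 h_2 \rangle$, which reproduces the target $h_1^3 h_4 c_0$.

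\textbf{Parts (2) and (3)} would follow the same template.  For part (2), I would replace the input differential $d_4(g) = h_1^4 h_4$ by $d_8(g^2) = h_1^8 h_5$ to construct the analogous Massey product, producing the factor $h_5$ in place of $h_4$ and an extra $h_1^4$.  For part (3), I would combine the May differential $d_6(c_0 g^3) = h_1^{10} D_4$ with the hidden extension $c_0 \cdot i_1 = h_1^4 D_4$ (already established inside the proof of Lemma \ref{lem:d6-c0g^3}); the latter is exactly what provides the $D_4$ factor appearing on the right-hand side, and together these yield the shuffled identification with $h_1^9 D_4$.

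\textbf{Main obstacle.}  The technical heart of the proof lies in executing the Massey product shuffles cleanly: one must identify sub-brackets that contain zero (strictly, not merely modulo indeterminacy), track the $\tau$-corrections forced by the motivic Adem relations, and rule out spurious terms in the final expression.  Because $h_0 h_2 \neq 0$, the naive threefold shuffle does not apply directly, so a longer Massey product or an auxiliary relation such as $h_0^2 h_2 = \tau h_1^3$ must be used to transport $h_0$ past $h_2^2$.  Comparison with machine-computed classical data \cite{Br1} and with the Chow-degree-zero isomorphism of Theorem \ref{thm:Chow-0} provides essential consistency checks on the final identification in each tridegree.
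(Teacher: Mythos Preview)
Your plan is in the right spirit---the paper also proves all three parts by a single threefold Massey-product shuffle---but you have created an obstacle that is not there and, in bypassing it, made the argument vaguer than it needs to be.

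The confusion is your claim that ``because $h_0 h_2 \neq 0$, the naive threefold shuffle does not apply directly.''  The relevant product is $h_0\cdot h_2^2$, not $h_0\cdot h_2$, and $h_0 h_2^2 = 0$ in $\Ext$ (it is hit by $d_2(h_0(1))$).  So there is no need for a fourfold bracket or the relation $h_0^2 h_2 = \tau h_1^3$; the threefold shuffle works immediately.  The paper exploits this by running the shuffle in the opposite direction from yours: it uses the standard description $c_0 = \langle h_1, h_0, h_2^2\rangle$ and shuffles
\[
h_1^3 h_4\,\langle h_1, h_0, h_2^2\rangle \;=\; \langle h_1^3 h_4, h_1, h_0\rangle\, h_2^2,
\]
where the right-hand bracket is computed from $d_4(g)=h_1^4 h_4$ (so that $h_2^2$ times it is $h_0\cdot h_2^2 g$).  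This is a one-line proof, with the obvious modification $h_1^3 h_4\rightsquigarrow h_1^7 h_5$ for part~(2).  Your orientation of the shuffle would instead require identifying $\langle h_0, h_2^2, h_1^4\rangle$ directly as $h_1^3 c_0$, which is doable but less immediate.

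For part~(3) you have the right key input, namely $c_0\cdot i_1 = h_1^4 D_4$ from Lemma~\ref{lem:d6-c0g^3}.  But the differential you cite, $d_6(c_0 g^3)=h_1^{10}D_4$, is a \emph{consequence} of that hidden extension, not an independent input.  The paper instead uses the differential $d_4(g^3)=h_1^6 i_1$ to compute $\langle h_1^5 i_1, h_1, h_2^2\rangle = h_2^2 g^3$, and then the same shuffle pattern (with $c_0=\langle h_1,h_2^2,h_0\rangle$) gives
\[
h_1^9 D_4 \;=\; h_1^5 i_1\cdot c_0 \;=\; h_1^5 i_1\,\langle h_1,h_2^2,h_0\rangle \;=\; \langle h_1^5 i_1,h_1,h_2^2\rangle\, h_0 \;=\; h_0\cdot h_2^2 g^3.
\]
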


\begin{proof}
For the first hidden extension, use the shuffle
\[
h_1^3 h_4 \anglb{ h_1, h_0, h_2^2 } = 
\anglb{ h_1^3 h_4, h_1, h_0 } h_2^2.
\]
Similarly, for the second hidden section, use the shuffle
\[
h_1^7 h_5 \anglb{ h_1, h_0, h_2^2 } = 
\anglb{ h_1^7 h_5, h_1, h_0 } h_2^2.
\]

For the third hidden extension, first
recall the hidden extension $c_0 \cdot i_1 = h_1^4 D_4$
from the proof of Lemma \ref{lem:d6-c0g^3}.
Use this relation to compute that
\[
h_1^9 D_4 = h_1^5 i_1 \langle h_1, h_2^2, h_0 \rangle =
\langle h_1^5 i_1, h_1, h_2^2 \rangle h_0.
\]
Finally, compute that $h_2^2 g^3 = \langle h_1^5 i_1, h_1, h_2^2 \rangle$
from the May differential $d_4 ( g^3) = h_1^6 i_1$.
\end{proof}

\begin{lemma}
\label{lem:h0-gr}
\mbox{}
\begin{enumerate}
\item
$h_0 \cdot g r = P h_1^3 h_5 c_0$.
\item
$h_0 \cdot l m = h_1^6 X_1$.
\item
$h_0 \cdot m^2 = h_1^5 c_0 Q_2$.
\end{enumerate}
\end{lemma}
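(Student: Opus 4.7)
The plan is to establish all three hidden $h_0$ extensions by Massey product shuffles, following the pattern of Lemma~\ref{lem:hidden-h0h2^2g} and the arguments in Section~\ref{subsctn:t-hidden}. In each case I would locate a Massey product presentation of one factor (coming from one of the May differentials tabulated earlier in Section~\ref{sctn:May}), shuffle $h_0$ across the bracket, and then identify the resulting class either via a previously established hidden extension or by Theorem~\ref{thm:Chow-0} together with the classical data of \cite{Br1}.

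For (1), I would represent $r$ by a Massey product of the form $\langle h_0^2, h_2^2, d_0 \rangle$ (arising from the $d_2$-differential $d_2(b_{21}) = h_0 h_2^2 + \tau h_1^2 h_3$, with indeterminacy controlled by inspection of the relevant tridegree on $E_\infty$). Multiplying by $g$ and shuffling, $h_0 \cdot gr$ becomes a bracket which I expect to pin down using the already-established hidden extension $\tau \cdot \tau h_2 g^2 = Ph_1^4 h_5$ from Lemma~\ref{lem:t^2-h2g^2}, since this constrains what $g$ can do against brackets involving $h_2^2$ and $h_0$. For (2), the plan is analogous: find a Massey product representation of $m$ with controlled indeterminacy using a $d_4$-differential, multiply by $l$, and shuffle $h_0$ across so that the resulting class lands in the tridegree of $h_1^6 X_1$, where (as can be read off from the $E_\infty$-chart) it is the unique nonzero candidate, with the classical shadow of \cite{Br1} providing an additional check via Theorem~\ref{thm:Chow-0}. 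For (3), I would combine a similar shuffle for $m^2 = m \cdot m$ with Lemma~\ref{lem:Q2-c0} ($c_0 \cdot Q_2 = P D_4$) and the hidden extension $c_0 \cdot i_1 = h_1^4 D_4$ from the proof of Lemma~\ref{lem:d6-c0g^3}, so that the output first appears as a multiple of $P D_4$ and can then be rewritten in the form $h_1^5 c_0 Q_2$.

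The main obstacle throughout is finding Massey product presentations whose indeterminacy is either trivial or fully computable, since (as already seen in the proof of Lemma~\ref{lem:Q2-c0}) several of the relevant threefold and fourfold subbrackets are not strictly zero and must be tracked by hand. Once the correct brackets are in place, the shuffle combined with an appeal to an adjacent hidden extension, or to the Chow-degree-zero isomorphism of Theorem~\ref{thm:Chow-0}, should uniquely determine each of the three products.
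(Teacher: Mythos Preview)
Your overall strategy of using Massey product shuffles together with Lemma~\ref{lem:t^2-h2g^2} is correct in spirit, but the specific brackets you propose do not work, and you are missing the key maneuver. The bracket $\langle h_0^2, h_2^2, d_0 \rangle$ lands in tridegree $(21,7,12)$, nowhere near $r$, and the differential you cite is wrong (Table~1 gives $d_2(b_{21}) = h_2^3 + h_1^2 h_3$; it is $h_0(1)$ whose $d_2$ is $h_0 h_2^2$). The paper does not represent $r$ itself as a bracket. Instead it passes through $h_2 \cdot e_0 r$: from the May differential $d_4(\Delta h_2 g) = \tau^2 h_2^2 g^2$ one gets $e_0 r = \langle \tau^2 g^2, h_2^2, h_0 \rangle$, and after shuffling and using $\tau^2 h_2 g^2 = P h_1^4 h_5$ this yields $h_2 \cdot e_0 r = P h_1^3 h_5 \langle h_1, h_2, h_0 h_2 \rangle = P h_1^3 h_5 c_0$. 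Only then is this related back to $h_0 \cdot gr$ by a second shuffle, $h_0 \cdot gr = h_0 \langle h_1^3 h_4, h_1, r \rangle = \langle h_0, h_1^3 h_4, h_1 \rangle r = h_2 e_0 \cdot r$. Converting the $h_0$ extension into an $h_2$ extension on a different element is the step your outline does not anticipate.

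Your plans for (2) and (3) are also more circuitous than needed. Part (2) is immediate from part (1): since $lm = e_0 \cdot gr$ on the May $E_8$-page, multiplying (1) by $e_0$ gives $h_0 \cdot lm = P h_1^3 h_5 c_0 e_0$, and the forward reference $h_1^3 X_1 = P h_5 c_0 e_0$ (Lemma~\ref{lem:h1-th1G}) finishes it. Part (3) runs parallel to part (1): from $lm = \langle \tau^2 g^3, h_2^2, h_0 \rangle$ and the non-hidden relation $\tau^2 h_2 g^3 = h_1^6 Q_2$ one obtains $h_2 \cdot lm = h_1^5 c_0 Q_2$, and then $h_2 l = h_0 m$ converts this to $h_0 \cdot m^2$. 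Neither Lemma~\ref{lem:Q2-c0} nor the $c_0 \cdot i_1$ extension is needed.
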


\begin{remark}
The three parts may seem unrelated, but note that
$l m = e_0 g r$ and $m^2 = g^2 r$ on the $E_8$-page of the May spectral sequence.
\end{remark}

\begin{proof}
First compute that 
$e_0 r = \langle \tau^2 g^2, h_2^2, h_0 \rangle$
because of the May differential $d_4( \Delta h_2 g) = \tau^2 h_2^2 g^2$.
Next observe that
\[
h_2 \cdot e_0 r = \langle \tau^2 g^2, h_2^2, h_0 \rangle h_2 =
\langle \tau^2 g^2, h_2^2, h_0 h_2 \rangle =
\langle \tau^2 h_2 g^2, h_2, h_0 h_2 \rangle.
\]
None of these brackets have indeterminacy.

Use the relation $P h_1^4 h_5 = \tau^2 h_2 g^2$ from
Lemma \ref{lem:t^2-h2g^2}
to write
\[
h_2 \cdot e_0 r = \langle Ph_1^4 h_5, h_2, h_0 h_2 \rangle =
P h_1^3 h_5 \langle h_1, h_2, h_0 h_2 \rangle = 
P h_1^3 h_5 c_0.
\]
The last step is to show that $h_2 \cdot e_0 r = h_0 \cdot g r$.
This follows from the calculation
\[
h_0 \cdot g r = h_0 \langle h_1^3 h_4, h_1, r \rangle =
\langle h_0, h_1^3 h_4, h_1 \rangle r = h_2 e_0 \cdot r.
\]
This finishes the proof of part (1).

For part (2), we will prove below in Lemma \ref{lem:h1-th1G}
that $h_1^3 X_1 = P h_5 c_0 e_0$.
So we wish to show that $h_0 \cdot l m = P h_1^3 h_5 c_0 e_0$.
This follows immediately from part (1), using that
$l m = e_0 g r$.

The proof of part (3) is similar to the proof of part (1).
First, $l m = \langle \tau^2 g^3, h_2^2, h_0 \rangle$.
As above, this implies that
$h_2 l m = \langle \tau^2 h_2 g^3, h_2, h_0 h_2 \rangle$.
Now use the (not hidden) relation $\tau^2 h_2 g^3 = h_1^6 Q_2$
to deduce that $h_2 l m = h_1^5 c_0 Q_2$.
The desired formula now follows since $h_2 l = h_0 m$.
\end{proof}

\begin{lemma}
$h_0 \cdot h_0^2 B_{22} = P h_1 h_5 c_0 d_0$.
\end{lemma}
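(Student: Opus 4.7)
The natural strategy is to bootstrap off the hidden $\tau$-extension $\tau \cdot h_1^2 B_{21} = Ph_5 c_0 d_0$ proved in Lemma~\ref{lem:t-B8}(2). Multiplying this relation by $h_1$ yields $\tau h_1^3 B_{21} = Ph_1 h_5 c_0 d_0$, so the problem reduces to establishing the identity $h_0^3 B_{22} = \tau h_1^3 B_{21}$ in $\Ext$. This reformulation is attractive because it reexpresses the right-hand side in terms of an element ($B_{21}$) for which we already understand the relevant hidden behavior, and it recasts the question as a relation between two $B$-type classes.

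To prove the reformulated identity, I would look for a Massey product presentation of $B_{22}$ isolating a factor of $h_0$, read off from the May differential that makes $B_{22}$ a surviving generator on the appropriate $E_r$-page. Once $B_{22}$ is written as a threefold bracket $\langle a, h_0, b\rangle$ with controlled indeterminacy, the standard shuffle $h_0 \langle a, h_0, b\rangle = \langle h_0, a, h_0\rangle b$ converts $h_0 \cdot h_0^2 B_{22}$ into a product that should either be visible on the $E_\infty$-page or match a previously established hidden extension. As a complementary method and sanity check, I would compare with Bruner's machine computation \cite{Br1}: the analogous classical hidden extension ought to be recorded there, and then Theorem~\ref{thm:Chow-0} plus weight bookkeeping forces the motivic version, up to identifying the correct power of $\tau$ (here, $\tau^0$, since all the factors on both sides can be exhibited in matching Chow degree through the reformulation above).

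The main obstacle is resolving the $\tau$-ambiguity and ensuring that no alternative element of the same tridegree can absorb the extension. Specifically, one must rule out that $h_0^3 B_{22}$ is zero, or that it equals an element of the form $\tau^k \cdot (\text{something else})$ sharing the same classical reduction. I expect this to follow by inspecting the list of classes in the relevant tridegree of $E_\infty$ (Tables~1--9 and~14): the reformulation $\tau h_1^3 B_{21} = Ph_1 h_5 c_0 d_0$ just derived pins down the only class that $h_0^3 B_{22}$ can possibly hit, so once nontriviality of $h_0^3 B_{22}$ is secured via the classical comparison, the extension is forced.
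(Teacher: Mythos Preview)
Your reduction to proving $h_0^3 B_{22} = \tau h_1^3 B_{21}$ is exactly the paper's first step: starting from $\tau \cdot h_1^2 B_{21} = P h_5 c_0 d_0$ and multiplying by $h_1$. From there, however, you are working much harder than necessary. The paper completes the argument in one line using two relations that are already visible on the May $E_\infty$-page (no Massey products, no machine comparison): first, $\tau h_1^3 = h_0^2 h_2$, which comes directly from the differential $d_2(b_{20}) = \tau h_1^3 + h_0^2 h_2$; second, the non-hidden relation $h_2 B_{21} = h_0 B_{22}$, which follows from the descriptions $B_{21} = Y d_0$, $B_{22} = Y e_0$ and the standard identity $h_2 d_0 = h_0 e_0$. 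Chaining these gives
\[
\tau h_1^3 \cdot B_{21} = h_0^2 h_2 \cdot B_{21} = h_0^2 \cdot h_0 B_{22} = h_0^3 B_{22},
\]
and you are done.

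Your Massey-product plan might eventually succeed, and the appeal to \cite{Br1} would certainly settle it, but both routes obscure what is really happening: the extension is entirely a consequence of the hidden $\tau$-extension from Lemma~\ref{lem:t-B8}(2) plus elementary multiplicative relations. The ``$\tau$-ambiguity'' you worry about does not arise once you notice that $\tau h_1^3 = h_0^2 h_2$ converts the $\tau$-multiple into an $h_0$-multiple directly.
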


\begin{proof}
This follows from the hidden $\tau$ extension
$\tau \cdot h_1^3 B_{21} = P h_1 h_5 c_0 d_0$
that follows from Lemma \ref{lem:t-B8},
together with the relation $\tau h_1^3 = h_0^2 h_2$.
\end{proof}

\begin{remark}
On the $E_\infty$-page, there is a relation
$h_0^2 B_4 + \tau h_1 B_{21} = 0$.
However, in $\Ext$, the relation becomes
$h_0^2 B_4 + \tau h_1 B_{21} = g_2'$.
This follows from the analogous classical hidden relation \cite{Br1}.
Through the geometric 70-stem,
this is the only example of a hidden relation of the form
$h_0 \cdot x + h_1 \cdot y$,
$h_0 \cdot x + h_2 \cdot y$, or
$h_1 \cdot x + h_2 \cdot y$.
\end{remark}


\subsection{Hidden $h_1$ extensions}

By exhaustive search, the following results give all of the
hidden $h_1$ extensions.

\begin{prop}
Table 12 lists all of the hidden
$h_1$ extensions through the geometric 70-stem.
\end{prop}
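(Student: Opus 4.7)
The plan is to mirror the two preceding propositions (for hidden $\tau$ and hidden $h_0$ extensions) in Sections \ref{subsctn:t-hidden} and \ref{subsctn:h0-hidden}. There are two things to establish: (i) that every extension listed in Table 12 really holds, and (ii) that no unlisted hidden $h_1$ extensions can occur. For (ii), I would perform an exhaustive search, stem by stem through the geometric 70-stem, enumerating all candidate pairs $(x, y)$ for which $h_1 \cdot x = 0$ on the $E_\infty$-page of the motivic May spectral sequence and $y$ lies in the matching tri-degree $(s+1, f+1, w+1)$ in strictly higher May filtration. The multiplicative description of $E_\infty$ already built up in Section \ref{sctn:May} limits the candidate list to a finite, manageable set in each stem.

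For (i), the easy cases follow mechanically from earlier input. Any candidate of Chow degree zero is settled immediately by Theorem \ref{thm:Chow-0} from the classical machine computations \cite{Br1}: a classical hidden $h_0 \cdot x_{\cl} = y_{\cl}$ with matching degrees produces a motivic hidden $h_1$ extension in Chow degree zero, and absence of a classical extension rules out the motivic one in the same tri-degree. For higher Chow degrees, many candidates are forced (or excluded) by Proposition \ref{prop:compare-ext} after inverting $\tau$. A second batch of hidden $h_1$ extensions is bootstrapped from Tables 10 and 11; for instance, any hidden $\tau$ or $h_0$ relation of the form $\tau \cdot h_1 x = \tau y$ with $\tau$ injective on the weight piece containing $y$ produces the corresponding hidden $h_1$ extension, and similarly for extensions linked by the non-hidden identities $\tau h_1^2 = h_0 h_2$ and $\tau h_1^3 + h_0 h_2^2 = 0$ used in Section \ref{subsctn:h0-hidden}.

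The subtle cases, which are precisely those featuring exotic motivic elements with no classical analogue (the $h_1$-local wedge generators $B_1$, $B_{23}$, $X_1$, $D_4$, and classes near $\tau g^2$, $\tau g^3$, $h_2 g^2$, $h_2 g^3$, $Q_2$, $i_1$), I would resolve by Massey product shuffling, exactly in the style of Lemmas \ref{lem:hidden-h0h2^2g}, \ref{lem:h0-gr}, and \ref{lem:Q2-c0}. The basic identity
\[
h_1 \langle a, b, c \rangle = \langle h_1, a, b \rangle c
\]
together with bracket expressions read off the May $d_2$ and $d_4$ differentials (many of which are already tabulated above, e.g.\ $k_1 = \langle d_1, h_4, h_1^2 h_4 \rangle$ and $i_1 = \langle h_1^4, h_4, h_1^2 h_4, h_4 \rangle$) will produce most of the remaining entries. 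For a few entries concentrated around $P h_1^k h_j$ (and, expectedly, for the relation $h_1^3 X_1 = P h_5 c_0 e_0$ cited in the proof of Lemma \ref{lem:h0-gr}(2)), I would invoke Nakamura's squaring operations $\Sq^i$ on $\Ext$ in the sense of \cite{Ma2}, applied to a simpler $h_1$-relation in lower filtration, as was done in Lemma \ref{lem:t^2-h2g^2}.

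The main obstacle will be controlling indeterminacy in the triple and fourfold brackets: the shuffles above are only useful when either the relevant subbracket is strictly zero or its indeterminacy is pinned down by cross-checking against the classical picture via Theorem \ref{thm:Chow-0}. Locating bracket presentations with strictly vanishing subbrackets, as the author does explicitly in the proof of Lemma \ref{lem:d6-c0g^3}, is the delicate step; where this fails, comparison to the $h_1$-local computation of Section \ref{sctn:h1-local} and to $\Ext_{A(2)}$ \cite{I1} will be needed to resolve the ambiguous summand and complete the exhaustive verification.
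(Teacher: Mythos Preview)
Your proposal is correct in outline and matches the paper's approach: exhaustive search to rule out unlisted possibilities, classical comparison via \cite{Br1} for the easy entries, and Massey product shuffles plus squaring operations plus the Chow-degree-zero isomorphism (Theorem \ref{thm:Chow-0}) for the subtle ones.

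A few of your specific predictions miss the mark, though none fatally. The bootstrapping from Tables 10 and 11 that you propose is not actually used for $h_1$ extensions in the paper; neither is comparison to $\Ext_{A(2)}$. The elements you flag as ``subtle'' ($B_1$, $B_{23}$, $D_4$, $h_2 g^k$, $Q_2$, $i_1$) are not the ones that appear in Table 12; the actual delicate entries involve $\tau h_1 G$, $h_1 B_3$, $h_1^2 B_6$, $h_1 D_{11}$, $h_1^2 X_3$, $r_1$, and $h_1^2 q_1$. The relation $h_1^3 X_1 = P h_5 c_0 e_0$ is obtained by Massey product shuffling (as part of a cluster of four related extensions starting from $\tau h_1 G$), not by squaring operations; the squaring-operation argument is reserved for $h_1 \cdot h_1^2 q_1 = h_0^4 X_3$, where one applies $\Sq^6$ to the relation $h_2 r = h_1 q$. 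Finally, the exhaustive search also requires proving a \emph{non}-extension, namely $h_1 \cdot C_0 = 0$, which the paper handles by a fourfold bracket shuffle; you should anticipate that ruling out spurious extensions can be as involved as establishing the genuine ones.
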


\begin{proof}
Many of the extensions follow by comparison to the classical case.
For example, there is a classical hidden extension
$h_1 \cdot x = h_2^2 d_1$.
This implies that there is a motivic hidden
extension $h_1 \cdot x = \tau h_2^2 d_1$.

Proofs for the more subtle cases are given below.
\end{proof}

\begin{lemma}
\label{lem:h1-th1G}
\mbox{}
\begin{enumerate}
\item
$h_1 \cdot \tau h_1 G = h_5 c_0 e_0$.
\item
$h_1 \cdot h_1 B_3 = h_5 d_0 e_0$.
\item
$h_1 \cdot \tau P h_1 G = P h_5 c_0 e_0$.
\item
$h_1 \cdot h_1^2 X_3 = h_5 c_0 d_0 e_0$.
\end{enumerate}
\end{lemma}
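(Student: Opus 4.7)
The plan is to prove (1) first using a Massey product shuffle argument, to derive (3) from (1) by multiplication by $P$, to handle (2) by an argument parallel to (1) with $B_3$ in place of $\tau h_1 G$, and to obtain (4) by combining (2) with an appropriate (possibly hidden) relation, or by multiplying (1) by $d_0$. This mirrors the strategy used throughout Section \ref{sctn:hidden-extn}: identify a threefold Massey product description of one of the factors, use a standard shuffle of the form $h_1 \langle a, b, c \rangle = \langle h_1, a, b \rangle c$, and recognize a known subbracket such as $c_0 = \langle h_2, h_1, h_0 h_2 \rangle$ in the result.

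For (1), I would first find a Massey product description of $\tau h_1 G$ coming from one of the May differentials that kills $G$ (together with a bracket partner producing the $\tau$), so that $h_1 \cdot \tau h_1 G$ becomes accessible via the shuffle identity. The product $h_5 c_0 e_0$ is expected to appear by recognizing a $c_0$-subbracket (via the identity above) while the surviving factors assemble into $h_5 e_0$. Indeterminacies must be checked stepwise, but in the relevant degrees they should vanish or be absorbed into known relations, exactly as in the proof of Lemma \ref{lem:h0-gr}.

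For (3), I would simply multiply (1) by $P$ and observe that $P \cdot \tau h_1 G$ is detected by $\tau P h_1 G$ on $E_\infty$ (no competing classes appear in that trigrading in the range we consider), so the identity $h_1 \cdot \tau P h_1 G = P h_5 c_0 e_0$ follows. For (2), the same shuffle machinery should apply: $B_3$ admits a Massey product description analogous to that of $\tau h_1 G$ but shifted so that $d_0$ replaces $c_0$ in the resulting subbracket, which yields $h_5 d_0 e_0$. For (4), the most direct route is to multiply (2) by $c_0$ and convert $h_1 B_3 c_0$ into $h_1^2 X_3$ via a hidden relation in $\Ext$ (checked by comparison with the $E_\infty$-page of the May spectral sequence and the classical calculation); alternatively, multiplying (1) by $d_0$ and identifying $h_1 \tau h_1 G \cdot d_0$ with $h_1 \cdot h_1^2 X_3$ gives the same conclusion.

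The main obstacle will be locating the correct Massey product presentations of $\tau h_1 G$, $B_3$, and $X_3$, and verifying the absence of indeterminacy at each shuffle step. Once these presentations are in hand, the manipulations are mechanical, but extracting them requires careful bookkeeping of the May $d_r$-differentials feeding into these classes, and in the case of (4) a delicate comparison with classical $\Ext_{\cl}$ via Theorem \ref{thm:Chow-0} and \cite{Br1} to rule out the few other possible targets that match the trigrading of $h_5 c_0 d_0 e_0$.
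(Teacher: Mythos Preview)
Your overall strategy is correct in spirit and largely matches the paper for parts (1), (3), and (4), but there are two points worth flagging.

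First, for part (1) the paper's specific bracket is $\tau h_1 G = \langle h_5, h_2 g, h_0^2 \rangle$, obtained from the May differential $d_4(\Delta_1 h_2) = h_2 h_5 g$ together with the observation that $\tau \Delta_1 h_1^3 = \Delta_1 h_0^2 h_2$ on the $E_4$-page. The shuffle then moves $h_1$ to the \emph{right}, giving $h_5 \langle h_2 g, h_0^2, h_1 \rangle$, and the subbracket is identified as $c_0 e_0$ (not $c_0$ alone) via $d_2(h_0(1)^2 b_{21}) = h_2^3 h_0(1)^2$. So your suggested subbracket $c_0 = \langle h_2, h_1, h_0 h_2 \rangle$ is not the one that actually appears; the $e_0$ factor comes packaged with $c_0$ inside the bracket rather than from $h_5$.

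Second, and more substantively, your plan for part (2) diverges from the paper. You propose a parallel Massey-product description of $B_3$ analogous to that of $\tau h_1 G$, with $d_0$ replacing $c_0$. The paper does \emph{not} do this. Instead it bootstraps from part (1): it computes $h_1 h_5 d_0 e_0 = \langle h_5 c_0 e_0, h_0, h_2^2 \rangle$ using $d_2(h_0(1)) = h_0 h_2^2$, substitutes $h_5 c_0 e_0 = \tau h_1^2 G$ from (1), and pulls $h_1^2$ out of the bracket to conclude that $h_5 d_0 e_0$ is $h_1$-divisible. An inspection of the chart then leaves $h_1 \cdot h_1 B_3$ as the only possible source. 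Your proposed direct approach might work, but it would require locating a Massey-product presentation of $B_3$ that the paper never produces; the paper's indirect route avoids that search entirely. For part (4) the paper similarly argues by $h_1$-divisibility of $h_5 c_0 d_0 e_0$ (from (1) times $d_0$) plus uniqueness of the source, which is one of the two routes you suggested.
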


\begin{proof}
The element $\tau h_1 G$ is represented in the May spectral sequence
by $\tau \Delta_1 h_1^3$, which equals
$\Delta_1 h_0^2 h_2$ on the May $E_4$-page.
We can then compute that
$\tau h_1 G = \langle h_5, h_2 g, h_0^2 \rangle$
using the May differential $d_4 ( \Delta_1 h_2 ) = h_2 h_5 g$.
Next shuffle to obtain
$h_1 \cdot \tau h_1 G =
\langle h_5, h_2 g, h_0^2 \rangle h_1 =
h_5 \langle h_2 g, h_0^2, h_1 \rangle$.
Finally, compute that $c_0 e_0 = \langle h_2 g, h_0^2, h_1 \rangle$
with the May differential $d_2 ( h_0(1)^2 b_{21} ) = h_2^3 h_0(1)^2$.
This establishes the first hidden extension.

For the second hidden extension,
begin by computing that
$h_1 h_5 d_0 e_0 = \langle h_5 c_0 e_0, h_0, h_2^2 \rangle$
using the May differential $d_2 (h_0(1)) = h_0 h_2^2$.
From part (1), this equals
$\langle \tau h_1^2 G, h_0, h_2^2 \rangle$, which equals
$h_1^2 \langle \tau G, h_0, h_2^2 \rangle$
because there is no indeterminacy.
This shows that $h_5 d_0 e_0$ is divisible by $h_1$.
The only possible hidden extension is
$h_1 \cdot h_1 B_3 = h_5 d_0 e_0$.

For the third hidden extension,
start with the relation $h_1 \cdot \tau P G = P h_1 \cdot \tau G$
because there is no possible hidden relation.
Therefore, using part (1),
\[
h_1^3 \cdot \tau P G = P h_1 \cdot h_1^2 \cdot \tau G 
= P h_1 h_5 c_0 e_0.
\]
It follows that $h_1 \cdot \tau P h_1 G = P h_5 c_0 e_0$.

For the fourth hidden extension,
use part (1) to conclude that
$h_5 c_0 d_0 e_0$ is divisible by $h_1$.
The only possibility is that
$h_1 \cdot h_1^2 X_3 = h_5 c_0 d_0 e_0$.
\end{proof}

\begin{lemma}
$h_1 \cdot h_1^2 B_6 = \tau h_2^2 d_1 g$.
\end{lemma}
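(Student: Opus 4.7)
The plan is to establish this hidden extension by the same Massey product shuffle technique used in Lemma \ref{lem:h1-th1G}, combined with classical comparison. I would first seek a Massey product representation for $d_1$ using the May $d_4$-differential $d_4(\nu_1) = h_1^2 h_4^2$ already exploited in Lemma \ref{lem:Q2-c0}; this (together with the representation of $k_1$) suggests expressing $d_1$ or $h_2 d_1$ as a bracket of the form $\langle h_1^a h_4^b, h_4, \cdot \rangle$. Multiplying by $\tau h_2 g$ and shuffling, using the secondary May differential $d_4(g) = h_1^4 h_4$ to absorb one of the $h_4$ factors, should produce a factor of $h_1$ on the outside. The conclusion would be that $\tau h_2^2 d_1 g$ is divisible by $h_1$ in $\Ext$.

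Once divisibility by $h_1$ is established, I would consult Table 12 and the charts for possible $h_1$-preimages in the appropriate stem, filtration, and weight. The candidate $h_1^2 B_6$ should be the unique class of the right tridegree modulo classes already accounted for by non-hidden extensions; anything else in that bidegree should be ruled out by comparison to the $E_\infty$-page and by elementary stem/weight bookkeeping. As a sanity check, I would verify the corresponding classical statement in Bruner's machine-computed tables \cite{Br1}: if classically $h_1 \cdot h_1^2 B_6 = h_2^2 d_1 g$, then the weight arithmetic forces exactly one factor of $\tau$ motivically, yielding the stated relation; if the classical product vanishes, then the extension is purely motivic and the Massey product computation above supplies it.

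The main obstacle will be the first step, namely pinning down Massey product representations for $d_1$ and for $h_2^2 d_1 g$ that have vanishing (or at least controllable) indeterminacy, since Massey products in this range tend to accumulate ambiguity from nearby classes. If direct brackets for $d_1$ do not shuffle cleanly, a fallback would be to write $B_6$ itself as a Massey product via May differentials on one of its factors and to shuffle from that side instead, arguing that $h_1^3 B_6$ equals $\tau h_2^2 d_1 g$ up to elements ruled out by the classical comparison and Theorem \ref{thm:Chow-0}.
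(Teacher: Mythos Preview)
Your overall strategy---show that $\tau h_2^2 d_1 g$ is $h_1$-divisible via a Massey product shuffle, then pin down the unique preimage---is exactly the shape of the paper's argument. However, you are missing the key input that makes the shuffle clean. The paper does not attempt to write $d_1$ or $h_2 d_1$ as a bracket via $d_4(\nu_1)$; instead it uses the already-established hidden $h_1$ extension $h_1 \cdot x = \tau h_2^2 d_1$ (obtained by classical comparison, and listed earlier in Table~12). With that in hand, the May differential $d_4(g) = h_1^4 h_4$ gives
\[
\tau h_2^2 d_1 g = \langle \tau h_2^2 d_1, h_1^2, h_1^2 h_4 \rangle = \langle h_1 x, h_1^2, h_1^2 h_4 \rangle = h_1 \langle x, h_1^2, h_1^2 h_4 \rangle,
\]
with no indeterminacy, and the uniqueness step finishes. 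So the ``$g$'' is what gets absorbed into a bracket, while $\tau h_2^2 d_1$ is handled by the extension on $x$.

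Your proposed route---bracketing $d_1$ via $\nu_1$ and then shuffling in $\tau h_2 g$---is not obviously viable: $d_1 = h_1(1)^2$ is not produced by $d_4(\nu_1) = h_1^2 h_4^2$ in any evident way (that differential is used to build $k_1$ and $i_1$, where $d_1$ is an \emph{input}), and you yourself flag the indeterminacy as the main obstacle. The fallback of writing $B_6$ as a bracket is similarly unspecified. The missing idea is simply to recognize $\tau h_2^2 d_1$ as $h_1 x$; once you see that, the computation collapses to a single bracket with controlled indeterminacy.
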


\begin{proof}
Begin with the hidden extension
$h_1 \cdot x = \tau h_2^2 d_1$, which follows by comparison to the
classical case \cite{Br1}.
Next compute that
$\tau h_2^2 d_1 g = \langle \tau h_2^2 d_1, h_1^2, h_1^2 h_4 \rangle$,
using the May differential $d_4 (g) = h_1^4 h_4$.
It follows that
$\tau h_2^2 d_1 g = \langle h_1 x, h_1^2, h_1^2 h_4 \rangle$,
which equals
$h_1 \langle x, h_1^2, h_1^2 h_4 \rangle$
because there is no indeterminacy.
Therefore, $\tau h_2^2 d_1 g$ is divisible by $h_1$, and 
the only possibility is that
$h_1 \cdot h_1^2 B_6 = \tau h_2^2 d_1 g$.
\end{proof}

\begin{lemma}
$h_1 \cdot h_1 D_{11} = \tau^2 c_1 g^2$.
\end{lemma}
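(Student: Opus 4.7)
The plan is to mimic the proof of the preceding lemma ($h_1 \cdot h_1^2 B_6 = \tau h_2^2 d_1 g$) by first producing a seed hidden $h_1$-extension whose target is a lower-degree relative of $\tau^2 c_1 g^2$, and then using the May differential $d_4(g) = h_1^4 h_4$ to realize $\tau^2 c_1 g^2$ as a Massey product into which the seed extension can be substituted, exhibiting divisibility by $h_1$.

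The first step is to locate a suitable seed. By comparison with the classical case \cite{Br1}, I would look for a hidden extension of the form $h_1 \cdot y = \tau c_1 g$ in the appropriate stem (analogous to the classical extension $h_1 \cdot x = \tau h_2^2 d_1$ used in the $B_6$ argument); if the direct analog is absent, the alternative is to build such a relation from a lower hidden $h_1$-extension by multiplying with $g$, using that $\tau c_1 g$ has only one plausible permanent-cycle preimage under $h_1$ on the $E_\infty$-page. Either way, the output is a relation $h_1 \cdot y = \tau c_1 g$ (possibly with an extra $\tau$) that can be fed into the next step.

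Next, I would compute
\[
\tau^2 c_1 g^2 = \bigl\langle \tau c_1 g, \; h_1^2, \; h_1^2 h_4 \bigr\rangle \cdot \tau
\]
(or the corresponding bracket with a $\tau$ absorbed into the first entry) using $d_4(g) = h_1^4 h_4$, and check that the indeterminacy vanishes by reading off the relevant tridegrees on the $E_\infty$-page of the May spectral sequence. Substituting the seed gives
\[
\tau^2 c_1 g^2 = \bigl\langle h_1 y, \; h_1^2, \; h_1^2 h_4 \bigr\rangle \cdot \tau = h_1 \cdot \tau \bigl\langle y, \; h_1^2, \; h_1^2 h_4 \bigr\rangle,
\]
which exhibits $\tau^2 c_1 g^2$ as divisible by $h_1$ in $\Ext$. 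Finally, an inspection of the $E_\infty$-page in the tridegree of $\tau^2 c_1 g^2 / h_1$ should reveal that $h_1 D_{11}$ is the only class that can receive a hidden $h_1$-extension hitting $\tau^2 c_1 g^2$, forcing the stated relation.

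The main obstacle is pinning down the seed extension with no indeterminacy and verifying that the triple Massey product has trivial indeterminacy in its precise tridegree, since the presence of the $\tau^2$ factor makes the weight bookkeeping tighter than in the $B_6$ case; if the most natural seed has nontrivial indeterminacy, one would need to either refine the bracket (e.g.\ by a fourfold Massey product, as in the proof of Lemma \ref{lem:d6-c0g^3}) or combine two partial extensions to isolate the $h_1$-divisibility cleanly.
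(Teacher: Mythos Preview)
Your approach is essentially the same as the paper's, with one correction and one simplification worth noting.

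The seed extension is $h_1 \cdot y = \tau^2 c_1 g$ (not $\tau c_1 g$); this is listed in Table~12 and follows by comparison to the classical case. With the correct power of $\tau$, no extra factor is needed: the May differential $d_4(g) = h_1^4 h_4$ gives directly
\[
\tau^2 c_1 g^2 = \langle \tau^2 c_1 g,\; h_1^2,\; h_1^2 h_4 \rangle = \langle h_1 y,\; h_1^2,\; h_1^2 h_4 \rangle = h_1 \cdot \langle y,\; h_1^2,\; h_1^2 h_4 \rangle,
\]
so your bracket manipulation with the awkward external $\tau$ is unnecessary.

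Where the paper differs from your outline is in the final identification. Rather than arguing by elimination that $h_1 D_{11}$ is the only class that can receive the hidden $h_1$-extension, the paper computes directly that $h_1 D_{11} = \langle y, h_1^2, h_1^2 h_4 \rangle$, using the May differential $d_4(g) = h_1^4 h_4$ together with the relation $\Delta h_3^2 g = \Delta h_1^2 d_1$ (recall $y = \Delta h_3^2$ and $D_{11} = \Delta h_1 d_1$ on the May $E_\infty$-page). This makes the argument self-contained and avoids the $E_\infty$-page inspection you propose at the end.
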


\begin{proof}
Begin by computing that
$h_1 D_{11} = \langle y, h_1^2, h_1^2 h_4 \rangle$,
using the May differential $d_4(g) = h_1^4 h_4$ and the relation
$\Delta h_3^2 g = \Delta h_1^2 d_1$.
Also recall the hidden extension
$h_1 \cdot y = \tau^2 c_1 g$, which follows by comparison to the
classical case \cite{Br1}.

It follows that
\[
h_1^2 D_{11} = \langle h_1 y, h_1^4, h_4 \rangle =
\langle \tau^2 c_1 g, h_1^4, h_4 \rangle
\]
because there is no indeterminacy.
Finally, use the May differential $d_4(g) = h_1^4 h_4$ again
to compute that
$\tau^2 c_1 g^2 = \langle \tau^2 c_1 g, h_1^2, h_1^2 h_4 \rangle$.
\end{proof}

\begin{lemma}
$h_1 \cdot C_0$ equals zero, not
$h_0 h_5 l$.
\end{lemma}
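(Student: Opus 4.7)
The plan is to rule out the hidden extension $h_1 \cdot C_0 = h_0 h_5 l$, since inspection of the relevant tridegree of the $E_\infty$-term of the motivic May spectral sequence leaves only two possible values for $h_1 \cdot C_0$ in $\Ext$, namely $0$ and $h_0 h_5 l$. The idea is to multiply through by $h_0$ and apply the standard relation $h_0 h_1 = 0$, which holds in $\Ext$ by comparison with the classical situation via Proposition \ref{prop:compare-ext} (the bidegree of $h_0 h_1$ contains no $\tau$-torsion, so classical vanishing propagates back).

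Consequently
\[
h_0 \cdot (h_1 \cdot C_0) = (h_0 h_1) \cdot C_0 = 0
\]
in $\Ext$. If $h_1 \cdot C_0$ were equal to $h_0 h_5 l$, this identity would force $h_0^2 h_5 l = 0$ in $\Ext$. So the task reduces to showing that $h_0^2 h_5 l$ is non-zero in $\Ext$.

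To verify this, I would appeal directly to the classical computation: $h_0^2 h_5 l$ is non-zero in $\Ext_{\cl}$ by Bruner's machine computations \cite{Br1}. Proposition \ref{prop:compare-ext} identifies $\Ext \otimes_{\M_2} \M_2[\tau^{-1}]$ with $\Ext_{\cl} \otimes_{\F_2} \F_2[\tau, \tau^{-1}]$, so the classical element $h_0^2 h_5 l$ corresponds to the motivic one up to a power of $\tau$, and the classical non-vanishing forces the motivic $h_0^2 h_5 l$ to be non-zero after $\tau$-inversion, hence non-zero in $\Ext$ itself. This contradiction eliminates the alternative and leaves $h_1 \cdot C_0 = 0$.

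The main potential obstacle is the classical non-vanishing check in this precise tridegree; the rest of the argument is formal. If that step turned out to be subtle, a backup would be to find a Massey product presentation of $C_0$ and compute $h_1 \cdot C_0$ by a shuffle, in the spirit of the other hidden extension arguments in this section.
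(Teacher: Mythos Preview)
Your argument has a genuine gap at the crucial step: the element $h_0^2 h_5 l$ is \emph{zero} in $\Ext$, both motivically and classically, so multiplying the hypothetical equation $h_1 C_0 = h_0 h_5 l$ by $h_0$ yields $0 = 0$ and no contradiction. One can see this directly from the chart (or from Bruner's tables): at $(63,9,34)$ the class $h_0 h_5 l$ supports no $h_0$-multiplication and there is no hidden $h_0$ extension listed at that tridegree in Table~11. Equivalently, classically $h_0^2 h_5 l$ lies in the $63$-stem at Adams filtration $10$, and one checks in the machine data that $h_0 \cdot (h_0 h_5 l) = 0$; indeed already $h_0^2 h_5 k = 0$ and $h_2 \cdot h_0 h_5 k = h_0 h_5 l$, so $h_0^2 h_5 l = h_0 h_2 \cdot h_0 h_5 k$, which dies with $h_0^2 h_5 k$. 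So the appeal to Proposition~\ref{prop:compare-ext} cannot rescue the step.

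The paper's proof takes the route you mention as a backup: it expresses $C_0$ as lying in the fourfold bracket $\langle h_0 h_3^2, h_0, h_1, \tau h_1 g_2\rangle$ (using the May differentials $d_4(\nu) = h_0^2 h_3^2$ and $d_4(x_{47}) = \tau h_1^2 g_2$), then shuffles to get $h_1 \cdot C_0 \in \langle h_1, h_0 h_3^2, h_0, h_1\rangle \cdot \tau h_1 g_2$. The bracket $\langle h_1, h_0 h_3^2, h_0, h_1\rangle$ is contained in the span of $f_0$ and $\tau h_1^3 h_4$, and both of these are annihilated by $h_1$ (hence by $\tau h_1 g_2$). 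This forces $h_1 \cdot C_0 = 0$. There does not appear to be a shortcut avoiding the Massey product computation.
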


\begin{proof}
First compute that $C_0$ belongs to
$\langle h_0 h_3^2, h_0, h_1, \tau h_1 g_2 \rangle$
using the May differentials $d_4 ( \nu ) = h_0^2 h_3^2$ and 
$d_4 (x_{47} ) = \tau h_1^2 g_2$.
Note that the subbracket
$\langle h_0 h_3^2, h_0, h_1 \rangle$ is strictly zero,
but the subbracket
$\langle h_0, h_1, \tau h_1 g_2 \rangle$ equals
$\{ 0, h_0 h_2 g_2 \}$.
We will not need to compute the indeterminacy
in the fourfold bracket.

Now a shuffle implies that
$h_1 \cdot C_0$ belongs to
$\langle h_1, h_0 h_3^2, h_0, h_1 \rangle \tau h_1 g_2$.
For degree reasons, the bracket
$\langle h_1, h_0 h_3^2, h_0, h_1 \rangle$
consists of elements spanned by $f_0$ and $\tau h_1^3 h_4$.
But the products $h_1 \cdot f_0$ and $h_1 \cdot \tau h_1^3 h_4$
are both zero, so
$\langle h_1, h_0 h_3^2, h_0, h_1 \rangle \tau h_1 g_2$ must be zero.
Therefore, $h_1 \cdot C_0$ is zero.
\end{proof}

\begin{lemma}
$h_1 \cdot r_1 = s_1$.
\end{lemma}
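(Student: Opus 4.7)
The plan is to deduce this hidden extension from the classical hidden extension $h_0 \cdot r = s$ by applying the squaring operation $\Sq^0$, in direct analogy with the proof of $\tau \cdot k_1 = h_2 h_5 n$ given earlier in this section. The subscript $1$ on $r_1$ and $s_1$ is meant to indicate that these elements arise (up to powers of $\tau$) as $\Sq^0$ of $r$ and $s$, and the squaring operation is a ring endomorphism that sends $h_0$ to $\tau h_1$ and $h_i$ to $h_{i+1}$ for $i \geq 1$.

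First, I would express $r$ and $s$ as Massey products, with no indeterminacy, using the May $d_2$ and $d_4$ differentials listed in Tables 1 and 3. By Milne's compatibility \cite{Mi} of $\Sq^0$ with Massey products, $\Sq^0 r$ is then given by the corresponding Massey product in which each entry is replaced by its $\Sq^0$; this introduces specific powers of $\tau$, just as in the formula $\Sq^0 h_0^2 h_3 = \tau^2 h_1^2 h_4$ used in the $k_1$ argument. Similarly one expresses $r_1$ directly as a Massey product and reads off an equation of the form $\Sq^0 r = \tau^a r_1$ in $\Ext$ (and analogously $\Sq^0 s = \tau^b s_1$), where $a$ and $b$ are forced by the weight grading.

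Applying $\Sq^0$ to the (classical and hence motivic) relation $h_0 \cdot r = s$ then yields $\tau h_1 \cdot \tau^a r_1 = \tau^b s_1$. A weight count shows $b = a + 1$, so after cancelling the common power of $\tau$ (using that the relevant $\tau$-multiplication is injective in this bidegree, which one checks against the $E_\infty$ chart and the hidden $\tau$-extensions already established in Section \ref{subsctn:t-hidden}), one obtains $h_1 \cdot r_1 = s_1$.

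The main obstacle will be the $\tau$-bookkeeping: pinning down the exact exponents $a$ and $b$, and checking that the Massey products for $r$, $s$, $r_1$, and $s_1$ have trivial indeterminacy so that the identifications are unambiguous. If indeterminacy or an unexpected $\tau$-torsion element were to interfere, the fallback would be a direct Massey product shuffle for $h_1 \cdot r_1$ using a bracket expression for $r_1$ obtained from a May differential, in the spirit of the proofs of $h_1 \cdot h_1^2 B_6 = \tau h_2^2 d_1 g$ and $h_1 \cdot h_1 D_{11} = \tau^2 c_1 g^2$ given above.
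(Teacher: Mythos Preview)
Your plan has a genuine obstacle at the $\tau$-cancellation step. The element $s_1$ is annihilated by $\tau$: from Table~3 one has $d_4(x_{68}) = \tau s_1$, so $\tau s_1 = 0$ already on the May $E_6$-page, and no hidden $\tau$-extension restores it. Carrying out your weight count, $\Sq^0$ doubles the internal bidegree, so $\Sq^0 r$ lands in weight $32$ while $r_1$ has weight $36$, giving $a = 4$; similarly $b = 5$. Your equation $\tau^5\, h_1 r_1 = \tau^5 s_1$ therefore reads $0 = 0$, and the $\tau$-injectivity you hoped to invoke fails precisely here. At best you would learn that $h_1 r_1$ is $\tau$-torsion, which still leaves the possibility $h_1 r_1 = 0$ to be ruled out by some independent argument.

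The paper's proof sidesteps all of this by invoking Theorem~\ref{thm:Chow-0} directly. The elements $h_1$, $r_1$, and $s_1$ all lie in internal Chow degree zero (for instance $r_1$ at $(66,6,36)$ has $66+6-2\cdot 36 = 0$, and similarly for the others). Theorem~\ref{thm:Chow-0} furnishes a ring isomorphism between $\Ext_{\cl}$ and the Chow-degree-zero subalgebra of motivic $\Ext$, under which classical $h_0$, $r$, $s$ correspond to motivic $h_1$, $r_1$, $s_1$ (compare the May descriptions $r = \Delta h_2^2$ versus $r_1 = \Delta_1 h_3^2$). The classical hidden extension $h_0 \cdot r = s$ then transports directly to $h_1 \cdot r_1 = s_1$, with no $\tau$ bookkeeping whatsoever. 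Your $\Sq^0$ idea is morally the same doubling map, but routing it through the motivic $\Sq^0$ introduces exactly the $\tau$ factors that then cannot be cancelled.
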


\begin{proof}
This follows immediately from Theorem \ref{thm:Chow-0}
and the classical relation $h_0 \cdot r = s$.
\end{proof}

\begin{lemma}
$h_1 \cdot h_1^2 q_1 = h_0^4 X_3$.
\end{lemma}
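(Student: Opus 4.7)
The plan is to reduce the problem via the previously established hidden $\tau$ extension $\tau \cdot B_8 d_0 = h_0^4 X_3$ from Lemma \ref{lem:t-B8}(3), so that it suffices to show $h_1 \cdot h_1^2 q_1 = \tau B_8 d_0$ in $\Ext$. Chaining this intermediate identity with Lemma \ref{lem:t-B8}(3) then delivers the desired hidden extension. The preceding lemma on $h_1 \cdot r_1 = s_1$ used Theorem \ref{thm:Chow-0} directly, but the factor of $h_0^4$ on the right-hand side here pushes us out of Chow degree zero, so that shortcut is unavailable and we have to work harder.

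To establish $h_1^3 q_1 = \tau B_8 d_0$, I would apply Proposition \ref{prop:compare-ext}: after inverting $\tau$, motivic $\Ext$ becomes $\Ext_{\cl}\tens_{\F_2}\F_2[\tau,\tau^{-1}]$, so the desired identity reduces to a hidden extension in $\Ext_{\cl}$ that can be read off from Bruner's machine-assisted computations \cite{Br1}. To upgrade the classical equality to a genuine motivic equality without a stray factor of $\tau$, I would use the May $E_\infty$-page together with the $\tau$-extensions collected in Table 10 to verify that the ambient motivic degree contains a unique nonzero class modulo $\tau$, forcing $h_1^3 q_1$ and $\tau B_8 d_0$ to coincide.

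The main obstacle will be the bookkeeping: pinpointing the correct classical extension in Bruner's tables, verifying that neither side is accidentally $\tau$-divisible in the motivic setting, and ruling out competing classes in the ambient degree that could absorb part of the equality. A fallback strategy, mirroring the arguments in Lemma \ref{lem:hidden-h0h2^2g} and Lemma \ref{lem:h0-gr}, would be to exhibit $h_1^2 q_1$ as a Massey product arising from an appropriate May $d_r$ differential and shuffle the remaining $h_1$ through the bracket to land directly on $h_0^4 X_3$, with indeterminacy controlled by a degree count using the generators and relations listed in Tables 1 and 2.
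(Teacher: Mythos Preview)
Your primary strategy fails for a concrete reason: both sides of the target equation are $\tau$-torsion. From Table 10 you have $\tau \cdot h_1^2 q_1 = h_0^2 D_2'$, so $\tau \cdot h_1^3 q_1 = h_1 \cdot h_0^2 D_2' = 0$ since $h_0 h_1 = 0$. Likewise $\tau \cdot h_0^4 X_3 = \tau^2 B_8 d_0 = 0$ because $B_8 d_0$ generates a copy of $\M_2/\tau^2$. Thus after inverting $\tau$ via Proposition \ref{prop:compare-ext} the identity degenerates to $0=0$ in $\Ext_{\cl}[\tau^{\pm 1}]$, and Bruner's tables give you no information whatsoever. Your subsequent uniqueness argument is correct in that $(67,13,36)$ contains only the single nonzero class $\tau B_8 d_0 = h_0^4 X_3$, but this only reduces the problem to showing $h_1^3 q_1 \neq 0$, which is precisely the content of the lemma and which you have not addressed.

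The paper's argument is quite different and does not route through the classical comparison at all. It applies the squaring operation $\Sq^6$ to the relation $h_2 r = h_1 q$; using $\Sq^5(q) = h_1 q_1$ (from \cite{Br2}) one obtains $h_3 r^2 = h_1^3 q_1$. The left side is then evaluated via the classically-known hidden relation $h_3 r = h_0^2 x + \tau h_2^2 n$, giving $h_3 r^2 = h_0^2 r x$, and then the chain $h_0 \cdot r = s$ (hidden) and $s x = h_0^3 X_3$ (non-hidden) finishes the computation. The key point is that the squaring operation manufactures the nonvanishing of $h_1^3 q_1$ directly, rather than trying to detect it after the fact. Your fallback of expressing $h_1^2 q_1$ as a Massey product and shuffling is not obviously workable either, since you would still need some input guaranteeing the bracket is nonzero in the $\tau$-torsion regime.
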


\begin{proof}
Apply $\Sq^6$ to the relation
$h_2 r = h_1 q$ to obtain that
$h_3 r^2 = h_1^2 \Sq^5 (q)$.
Next, observe that $Sq^5 ( q ) = h_1 q_1$
by comparison to the classical case \cite{Br2}.

By comparison to the classical case, there is a relation
$h_3 r = h_0^2 x + \tau h_2^2 n$, so
$h_3 r^2 = h_0^2 r x$.
Finally, use the hidden extension $h_0 \cdot r = s$
and the non-hidden relation $s x = h_0^3 X_3$.
\end{proof}

\subsection{Hidden $h_2$ extensions}

By exhaustive search, the following results give all of the
hidden $h_2$ extensions.

\begin{prop}
Table 13 lists all of the hidden
$h_2$ extensions through the geometric 70-stem.
\end{prop}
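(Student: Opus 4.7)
The plan is to imitate the strategy already used for the hidden $\tau$, $h_0$, and $h_1$ propositions. First I would perform an exhaustive degree-by-degree search through the $E_\infty$-term of the motivic May spectral sequence in the range $0 \leq s \leq 70$, listing every pair $(x,y)$ for which $h_2 \cdot x = y$ is a potential hidden extension, i.e.\ where $x$ lives in filtration strictly lower than $y$, the product $h_2 x$ vanishes on the $E_\infty$-page for filtration reasons, and $y$ is the only nonzero class in the correct tridegree $(s+3, f+1, w+1)$ whose May filtration lies above that of $h_2 x$. This enumeration produces a finite list of candidate extensions; the remaining task is to decide each candidate, and the content of the proposition is that precisely the extensions recorded in Table 13 are nonzero.

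Most candidates I would settle in bulk by one of three ``cheap'' mechanisms. First, whenever $x$ and $y$ both sit in Chow degree zero, Theorem \ref{thm:Chow-0} reduces the question to a classical $h_2$ extension, which is read off from Bruner's machine tables \cite{Br1}; the extension then lifts motivically, possibly with a balancing power of $\tau$ on the right. Second, any classical hidden $h_2$ extension that does not lie in Chow degree zero still propagates to a motivic one after multiplying by an appropriate $\tau^k$, and this takes care of the rest of the ``classically visible'' cases. Third, several candidates are forced by the already-established hidden $\tau$, $h_0$, and $h_1$ extensions from Tables 10--12 together with the relations $\tau h_1^3 = h_0^2 h_2$, $h_0 h_2 = h_1 \cdot (\text{something})$-type triple-bracket identities, and the Leibniz rule: if we know $h_1 \cdot a = b$ and $h_2 \cdot a' = $ (something involving $a$), then a shuffle often gives the $h_2$ value directly.

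For the remaining handful of subtle cases --- the ones analogous to Lemmas \ref{lem:hidden-h0h2^2g}, \ref{lem:h0-gr}, and \ref{lem:h1-th1G} in the $h_0$ and $h_1$ discussions --- I would argue individually using Massey product shuffles. The template is: express one of the relevant classes as a (threefold or fourfold) Massey product $\langle h_2, u, v \rangle$ read off from a known May $d_r$ differential with no indeterminacy, then shuffle $h_2 \cdot \langle h_2, u, v \rangle = \langle h_2, h_2, u \rangle v$ (or its fourfold analogue) and identify the right-hand side with the target using a previously established relation. When a purely algebraic shuffle is unavailable, I would invoke a squaring operation in the sense of \cite{Ma2}, applying $\Sq^i$ to a known relation to produce the desired new relation, exactly as in the proof of Lemma \ref{lem:t^2-h2g^2}; the classical values of $\Sq^i$ on generators are taken from \cite{Br2}.

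The main obstacle will be the exotic motivic-only $h_2$ extensions that involve $\tau$-torsion classes such as $\tau h_2 g^2$, $\tau h_0 g^3$, and the $h_2^2 g^k$ family from Example \ref{ex:h1^7h5c0}; for these, neither the classical comparison nor the Chow degree zero isomorphism applies directly, and one has to chain together a hidden $\tau$ or $h_0$ extension with a Massey product shuffle. I also expect some bookkeeping difficulty in ruling out candidates: proving that a potential $h_2$ extension is \emph{zero} usually requires showing that the only available target class is either divisible by $h_1$ (and hence detected elsewhere) or forced to vanish by an already-known relation, and these negative arguments tend to be the most tedious. Once each candidate on the enumerated list has been either confirmed or ruled out by one of the mechanisms above, the resulting nonzero extensions will match Table 13 exactly, completing the proof.
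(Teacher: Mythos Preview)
Your proposal is correct and follows essentially the same strategy as the paper: an exhaustive search, with most cases settled by comparison to classical hidden $h_2$ extensions from \cite{Br1} and by deduction from the already-established hidden $h_0$ extensions (since $h_2 \cdot (h_0 x) = h_0 \cdot (h_2 x)$ immediately converts each entry of Table~11 of the form $h_0 \cdot h_2 y$ into an $h_2$ extension). The paper's treatment of the residual ``subtle'' cases is actually lighter than you anticipate: only two extensions, $h_2 \cdot h_2 B_2 = h_1 h_5 c_0 d_0$ and $h_2 \cdot B_6 = \tau e_1 g$, require separate Massey-product arguments, and no squaring operations are needed in this section; the remaining nontrivial ones ($h_2 \cdot e_0 r$, $h_2 \cdot k m$, $h_2 \cdot l m$) were already obtained as byproducts in the proof of Lemma~\ref{lem:h0-gr}.
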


\begin{proof}
Many of the extensions follow by comparison to the classical case.
For example, there is a classical hidden extension
$h_2 \cdot Q_2 = h_5 k$.  This implies that the same formula
holds motivically.

Also, many extensions are implied by hidden $h_0$ extensions
that we already established in Section \ref{subsctn:h0-hidden}.
For example, There is a hidden extension
$h_0 \cdot h_2^2 g = h_1^3 h_4 c_0$.
This implies that there is also a hidden extension
$h_2 \cdot h_0 h_2 g = h_1^3 h_4 c_0$.

Proofs for the more subtle cases are given below.
\end{proof}

\begin{remark}
We established the extensions
\begin{enumerate}
\item
$h_2 \cdot e_0 r = P h_1^3 h_5 c_0$
\item
$h_2 \cdot l m = h_1^5 c_0 Q_2$
\end{enumerate}
in the proof of Lemma \ref{lem:h0-gr}.
The extension
$h_2 \cdot k m = h_1^6 X_1$ follows from
Lemma \ref{lem:h0-gr} and the relation
$h_2 k = h_0 l$.
\end{remark}

\begin{lemma}
$h_2 \cdot h_2 B_2 = h_1 h_5 c_0 d_0$.
\end{lemma}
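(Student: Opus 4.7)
The plan is to follow the same Massey product shuffling template that was used for the $h_1$ extensions in Lemma \ref{lem:h1-th1G} and for the earlier $h_2$ extensions in this subsection. The target $h_1 h_5 c_0 d_0$ sits in a family closely related to $P h_5 c_0 d_0$, which we have already identified via the hidden $\tau$ extension $\tau \cdot h_1^2 B_{21} = P h_5 c_0 d_0$ from Lemma \ref{lem:t-B8}. So the first thing I would try is to leverage that relation together with a Massey product expression for either side of the claimed extension.

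Concretely, I would look for a threefold bracket representation of $h_1 h_5 c_0 d_0$ using the standard $\langle -, h_0, h_2^2 \rangle$-type bracket built from the May $d_2$ differential $d_2(h_0(1)) = h_0 h_2^2$, following the pattern $c_0 e_0 = \langle h_2 g, h_0^2, h_1 \rangle$ used in Lemma \ref{lem:h1-th1G}. Parallel to this, I would seek a Massey product expression for $B_2$ coming from a May $d_4$ differential, so that the outer factor $h_2$ on the left can be shuffled into the bracket. Once both sides are expressed as Massey products with appropriate compatibility, the shuffle $h_2 \cdot \langle a, b, c \rangle = \langle h_2, a, b \rangle \cdot c$ (or its mirror) should produce the equality directly, provided the indeterminacies vanish for degree reasons in the range.

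As a backup route, since a substantial number of hidden $h_2$ extensions in Table 13 are obtained by direct comparison with the machine computations \cite{Br1}, I would verify whether the analogous relation $h_2 \cdot h_2 B_2 = h_1 h_5 c_0 d_0$ holds classically, check that the weights on both sides agree, and conclude the motivic statement without any $\tau$-power correction. If instead the classical relation carries a $\tau$, the motivic version would have to be adjusted accordingly, and the Massey product method becomes the primary tool.

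The main obstacle is pinning down the correct Massey product representation of $B_2$ and controlling indeterminacy — the $B$-family elements in this range typically admit several bracket descriptions coming from different $d_2$ or $d_4$ May differentials, and choosing one whose shuffle lines up cleanly with a bracket representing $h_1 h_5 c_0 d_0$ is where the real work lies. A secondary concern is ruling out the alternative possibility that $h_2 \cdot h_2 B_2$ equals $h_1 h_5 c_0 d_0$ plus a $\tau$-multiple of some other element in the same tridegree; this exclusion will require either a weight or Chow-degree argument via Theorem \ref{thm:Chow-0}, or direct comparison with the classical list in \cite{Br1}.
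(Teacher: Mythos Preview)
Your general instinct---Massey product shuffling combined with the $\tau$ extension from Lemma \ref{lem:t-B8}---is the right toolkit, but the specific route you sketch does not match what actually works, and the gaps are substantive.

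First, the relevant Massey product for $h_2 B_2$ comes from the $d_6$ differential $d_6(Y) = h_0^3 g_2$, giving $h_2 B_2 = \langle g_2, h_0^3, h_2^2 \rangle$; there is no useful $d_4$ expression for $B_2$ itself of the type you describe. The paper then multiplies by $h_2$ to get $\langle g_2, h_0^3, h_2^3 \rangle$ and uses the relation $h_2^3 = h_1^2 h_3$ to rewrite this as $\langle g_2, h_0^3, h_1 \rangle \cdot h_1 h_3 = B_1 \cdot h_1 h_3$. This reduction to the auxiliary hidden extension $h_3 \cdot B_1 = h_5 c_0 d_0$ is the real content of the argument, and nothing in your plan anticipates it. That auxiliary extension is then established by combining the non-hidden relation $B_1 \cdot h_1^2 d_0 = h_1^3 B_{21}$, the $\tau$ extension $\tau \cdot h_1^2 B_{21} = P h_5 c_0 d_0$ you cite, and the classical hidden extension $h_3 \cdot P h_1 = \tau h_1^2 d_0$, followed by cancellation of $P h_1$.

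Second, your backup route via \cite{Br1} is unlikely to succeed here: this lemma is singled out precisely because it is \emph{not} among the extensions that follow by direct classical comparison. If it were readable straight from Bruner's tables, it would have been absorbed into the preceding paragraph of the proposition rather than given its own argument.
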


\begin{proof}
First compute that 
$h_2 B_2 = \langle g_2, h_0^3, h_2^2 \rangle$, with no indeterminacy,
using the May differential $d_6 ( Y ) = h_0^3 g_2$.
Then $h_2 \cdot h_2 B_2$ equals 
$\langle g_2, h_0^3, h_2^3 \rangle$, 
because there is no indeterminacy.
This bracket equals
$\langle g_2, h_0^3, h_1^2 h_3 \rangle$,
which equals
$\langle g_2, h_0^3, h_1 \rangle h_1 h_3$ 
since there is no indeterminacy.
Compute that the bracket $\langle g_2, h_0^3, h_1 \rangle$
equals $B_1$, using the May differential $d_6 ( Y) = h_0^3 g_2$.

We have now shown that $h_2 \cdot h_2 B_2$ equals
$h_1 h_3 \cdot B_1$.
It remains to show that there is a hidden extension
$h_3 \cdot B_1 = h_5 c_0 d_0$.
First observe that
$B_1 \cdot h_1^2 d_0 = h_1^3 B_{21}$ by a non-hidden relation.
This implies that 
$B_1 \cdot \tau h_1^2 d_0 = P h_1 h_5 c_0 d_0$ by 
Lemma \ref{lem:t-B8}.

Now there is a hidden extension
$h_3 \cdot P h_1 = \tau h_1^2 d_0$, so
$B_1 \cdot h_3 \cdot P h_1 = P h_1 h_5 c_0 d_0$.
The only possibility is that
$h_3 \cdot B_1 = h_5 c_0 d_0$.
\end{proof}

\begin{lemma}
$h_2 \cdot B_6 = \tau e_1 g$.
\end{lemma}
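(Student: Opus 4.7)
The plan is to leverage the previous lemma, which gives $h_1 \cdot h_1^2 B_6 = \tau h_2^2 d_1 g$, together with standard relations between classical generators to force the desired hidden extension. This avoids a direct Massey product calculation for $B_6$ itself.

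First I would multiply the identity $h_1^3 B_6 = \tau h_2^2 d_1 g$ on both sides by $h_2$ to get
\[
h_1^3 \cdot h_2 B_6 = \tau h_2^3 d_1 g = \tau h_1^2 h_3 d_1 g,
\]
using the (non-hidden) relation $h_2^3 = h_1^2 h_3$. Next, invoke the classical relation $h_1 e_1 = h_3 d_1$, which transfers to the motivic setting by comparison (both sides have weight making the transfer automatic, and there is no $\tau$ ambiguity since the relation is Chow-degree preserving under the isomorphism of Theorem \ref{thm:Chow-0}, or directly by comparison to \cite{Br1}). Substituting, the right side becomes $\tau h_1^3 e_1 g$, so
\[
h_1^3 \cdot \bigl( h_2 B_6 + \tau e_1 g \bigr) = 0 \quad \text{in } \Ext.
\]

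The remaining task is to promote this to the equality $h_2 B_6 = \tau e_1 g$, and this is the main obstacle. It requires verifying that no non-zero element in the relevant tridegree (the stem/weight of $h_2 B_6$) is annihilated by $h_1^3$. I would check this by inspecting the $E_\infty$-page of the motivic May spectral sequence in that tridegree, together with the already-established hidden $h_1$-extensions from Table 12: the $h_1^3$-annihilated elements must be read off from the $E_\infty$-generators and combined with the hidden $h_1$-structure previously compiled. In the expected scenario, the only candidate in this tridegree is $\tau e_1 g$ itself, forcing the conclusion.

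If the $h_1^3$-kernel check leaves any ambiguity, the backup plan is to give a direct Massey product argument in the style of Lemma \ref{lem:t^2-h2g^2} and the preceding $h_1$-extension lemmas: represent $\tau e_1 g$ as $\langle \tau e_1, h_1^2, h_1^2 h_4 \rangle$ via the May differential $d_4(g) = h_1^4 h_4$, and then use a hidden extension of the form $h_2 \cdot (\text{something}) = \tau e_1$ (obtained by comparison to \cite{Br1}) combined with a shuffle to place $B_6$ on the other factor of the bracket. Either route should close the argument, but the $h_1^3$-injectivity verification is the pivotal technical point.
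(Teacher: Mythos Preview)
Your main approach has a fatal flaw: the relation $h_1 h_2 = 0$ holds in $\Ext$ (Table~2), so multiplying the identity $h_1^3 B_6 = \tau h_2^2 d_1 g$ by $h_2$ yields $0 = 0$. Indeed, $h_1^3 \cdot (h_2 B_6) = (h_1 h_2) \cdot h_1^2 B_6 = 0$ regardless of what $h_2 B_6$ is, and on the other side $\tau h_2^3 d_1 g = \tau h_1^2 h_3 d_1 g = \tau h_1^3 e_1 g$ is likewise forced to vanish for the same reason. So no information survives, and the ``$h_1^3$-injectivity check'' is vacuous: every element in that tridegree lies in the kernel of $h_1^3$ once you multiply by $h_2$ first. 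This cannot be repaired by working harder on the kernel.

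Your backup plan is also problematic as stated. To shuffle $h_2$ into $\langle \tau e_1, h_1^2, h_1^2 h_4 \rangle$ you would need $\tau e_1$ to be $h_2$-divisible, i.e.\ an element in degree $(35,3,18)$, but there is nothing there. The paper's argument goes in a rather different direction: it computes the bracket $\langle \tau, B_6, h_1^2 h_3 \rangle = \langle \tau, B_6, h_2^3 \rangle$ directly from May $d_2$ differentials and identifies it with $h_2 C_0$. If $h_2 B_6$ were zero, this bracket would factor as $\langle \tau, B_6, h_2 \rangle h_2^2$, forcing $C_0$ to be $h_2$-divisible, which it is not. That indivisibility is the real obstruction, and it does not seem accessible from the $h_1$-extension you started with.
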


\begin{proof}
We will first prove that 
\[
\langle \tau, B_6, h_1^2 h_3 \rangle = h_2 C_0,
\]
with no indeterminacy.  The May differentials 
$d_2( b_{30} b_{40} h_1(1)) = \tau B_6$ and 
$d_2( B h_1 b_{21} h_1(1)) = h_1^2 h_3 B_6$
imply that the bracket equals
$\tau B h_1 b_{21} h_1(1) + h_1^2 h_3 b_{30} b_{40} h_1(1)$.
But this expression equals $h_2 C_0$ on the May $E_4$-page
because of the differential
\[
d_2 ( b_{21} b_{30} b_{40} h_1(1)) =
h_2^3 b_{30} b_{40} h_1(1) + \tau B h_1 b_{21} h_1(1) + 
h_1^3 b_{30} b_{22} b_{40}.
\]

This means that
\[
\langle \tau, B_6, h_2^3 \rangle = h_2 C_0.
\]
If $h_2 \cdot B_6$ were zero, then this would imply that
\[
\langle \tau, B_6, h_2 \rangle h_2^2 = h_2 C_0.
\]
However, $C_0$ cannot be divisible by $h_2$.
\end{proof}

\section{$h_1$-local cohomology of motivic $A(3)$}
\label{sctn:h1-local}

Because $h_1^k$ is non-zero in $\Ext$,
it makes sense to invert $h_1$ and compute $\Ext[h_1^{-1}]$.
In fact, $\Ext[h_1^{-1}]$ is practically 
computable in a much larger range than $\Ext$.
For our purposes, it is enough to consider
$\Ext_{A(3)} [h_1^{-1}]$ because this simpler calculation
captures everything that we need to know in the range under study.
Here $A(3)$ is the $\M_2$-subalgebra of the Steenrod algebra $A$
generated by $\Sq^1$, $\Sq^2$, $\Sq^4$, and $\Sq^8$.

Since localization is exact, we can invert $h_1$ in the
$E_2$-term of the motivic May spectral sequence for $A(3)$;
then we have a spectral sequence that converges to
$\Ext_{A(3)} [h_1^{-1}]$.

When we do this, we find that some of the usual generators can be
written in terms of other generators.  For example,
the relation $h_0(1)^2 = b_{20} b_{21} + h_1^2 b_{30}$
implies that $b_{30} = h_1^{-2} h_0(1)^2 + h_1^{-2} b_{20} b_{21}$.
Similarly, some relations simplify, and others become redundant.
For example, the relation $h_0 h_1 = 0$ implies that $h_0 = 0$.

\begin{prop}
The $h_1$-local $E_2$-term of the motivic May spectral
sequence for $A(3)$ is the $\M_2[h_1^{\pm 1}]$-algebra
generated by the elements in the following table,
subject to the single relation $h_3 h_0(1) = 0$.
The values of the May $d_2$ differential are also given in the table.
\end{prop}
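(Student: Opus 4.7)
The plan is to derive the $h_1$-local presentation directly from the (non-localized) presentation of the motivic May $E_2$-term for $A(3)$, exploiting the fact that localization is exact to ensure no information is lost. Since the $E_2$-term of the motivic May spectral sequence for $A(3)$ is obtained from the classical one by tensoring with $\F_2[\tau]$ (Proposition \ref{prop:assoc-graded}(b), restricted to $A(3)$), both the generators and the relations are known from the classical case \cite{T}, with motivic weight corrections.

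First I would write down the full list of generators of the (unlocalized) motivic May $E_2$-term for $A(3)$. These are the $h_{ij}$ surviving $d_1$ together with the secondary generators $b_{ij}$, $h_i(j)$, and so on, all restricted by the constraint $i+j \leq 4$ corresponding to the subalgebra $A(3)$. Next I would list the relations, in particular the vanishing products such as $h_0 h_1 = 0$, $h_1 h_2 = 0$, $h_2 h_3 = 0$, and the quadratic relations like $h_0(1)^2 = b_{20} b_{21} + h_1^2 b_{30}$ and its higher analogues.

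Second I would systematically examine what each of these relations becomes after inverting $h_1$. Relations of the form $h_1 \cdot x = 0$ force $x=0$, thereby killing the corresponding generators ($h_0, h_2$, etc.) and allowing me to discard them from the generating set. Quadratic relations of Koszul type that contain a single term divisible only by a power of $h_1$ (such as the $h_0(1)^2$ relation above) can be solved to express one generator, like $b_{30}$, as a Laurent combination of the others; that generator is then eliminated. Each remaining generator in the claimed table should be identified with a surviving generator from the unlocalized presentation, and each remaining relation tracked to see whether it becomes trivial (one side is zero after a prior elimination), reduces to a definition already used for an elimination, or survives as a genuine relation. The goal is to show that the only genuinely surviving relation is $h_3 h_0(1) = 0$.

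Finally, for the stated $d_2$ values, I would lift the classical formulas for $d_2$ on the $b_{ij}$, $h_i(j)$, and related secondary generators \cite{T}, inserting the unique power of $\tau$ that balances the weight (as in the worked example $d_2(b_{20}) = \tau h_1^3 + h_0^2 h_2$), and then simply restrict these to the $h_1$-inverted setting, where terms involving $h_0$, $h_2$, or any eliminated generator get rewritten using the elimination formulas (often collapsing to zero or to a simpler expression). The main obstacle is the bookkeeping: verifying completeness, i.e.\ that no additional relation has been overlooked and that the elimination procedure is internally consistent, so that the presentation genuinely reduces to the claimed form with the single nontrivial relation $h_3 h_0(1) = 0$.
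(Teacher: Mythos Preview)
Your proposal is correct and is essentially the explicit version of what the paper does: the paper's proof is the single line ``This follows immediately by comparison to the motivic May spectral sequence for $A$,'' which amounts to restricting the known presentation (Tables 1 and 2) to the generators $h_{ij}$ with $i+j \leq 4$ and then carrying out exactly the elimination procedure you describe after inverting $h_1$. One small point you do not mention explicitly: the generator $b'_{40} = h_1^2 b_{40} + b_{20} b_{31}$ is not one of the original $E_2$-generators but a change of variable chosen so that $d_2(b'_{40}) = 0$ (check: $h_1^2 \cdot \tau h_1 b_{31} + \tau h_1^3 \cdot b_{31} = 0$ once $h_4$ and $h_2$ are gone), which streamlines the passage to $E_\infty$ in the next proposition; your procedure would naturally produce $b_{40}$ first, and this substitution is then a cosmetic but useful final step.
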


\begin{center}
\begin{tabular}{|l|l|l|}
\hline
generator & $(m,s,f,w)$ & $d_2$ \\
\hline
$h_3$ & $(1,7,1,4)$ & \\
$b_{20}$ & $(4,4,2,2)$ & $\tau h_1^3$ \\
$h_0(1)$ & $(4,7,2,4)$ & \\
$b_{21}$ & $(4,10,2,6)$ & $h_1^2 h_3$ \\
$b_{31}$ & $(6,26,2,14)$ & \\
$b'_{40}$ & $(10,30,4,16)$ & \\
\hline
\end{tabular}
\end{center}

The element $b'_{40}$ is defined to be $h_1^2 b_{40} + b_{20} b_{31}$.

\begin{proof}
This follows immediately by comparison to the motivic May spectral
sequence for $A$.
\end{proof}

\begin{prop}
\label{prop:A3-Einfty}
The $h_1$-local $E_\infty$-term of the motivic May spectral
sequence for $A(3)$ is the $\frac{\M_2}{\tau}[h_1^{\pm 1}]$-algebra
generated by the elements in the following table,
subject to the single relation $h_1^2 e_0^2 + c_0^2 g = 0$.
\end{prop}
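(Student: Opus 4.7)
The plan is to compute the $d_2$-cohomology of the $E_2$-term from the previous proposition, check that no higher May differentials act in this $h_1$-local setting, and then translate the answer into the notation of the generator table.

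Write the $E_2$-term as $R = S/(h_3 h_0(1))$, where $S = \M_2[h_1^{\pm 1}][h_3, b_{20}, h_0(1), b_{21}, b_{31}, b'_{40}]$ is the free polynomial algebra on the six listed generators.  The differential $d_2$ is nontrivial only on $b_{20}$ and $b_{21}$.  First I would compute $H(S, d_2)$ by a K\"unneth/Koszul decomposition: the sub-dga $(\M_2[h_1^{\pm 1}][b_{20}], d_2 b_{20} = \tau h_1^3)$ has cohomology $\tfrac{\M_2}{\tau}[h_1^{\pm 1}][b_{20}^2]$ because $\tau$ becomes a coboundary (using invertibility of $h_1$) and only even powers of $b_{20}$ remain cocycles, and the sub-dga $(\M_2[h_1^{\pm 1}][h_3, b_{21}], d_2 b_{21} = h_1^2 h_3)$ similarly yields $\M_2[h_1^{\pm 1}][b_{21}^2]$.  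Tensoring with the trivial $d_2$-pieces contributed by $h_0(1), b_{31}, b'_{40}$ gives
\[
H(S, d_2) \;\cong\; \tfrac{\M_2}{\tau}[h_1^{\pm 1}][b_{20}^2, b_{21}^2, h_0(1), b_{31}, b'_{40}].
\]

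To pass from $H(S)$ to $H(R)$, I would use the short exact sequence of chain complexes $0 \to I \to S \to R \to 0$ with $I = (h_3 h_0(1))$.  Since $h_3 = h_1^{-2} d_2(b_{21})$ is a coboundary in $S$, the class $[h_3 h_0(1)]$ vanishes in $H(S)$, so the map $i_* \colon H(I) \to H(S)$ (which is multiplication by $h_3 h_0(1)$ under the identification $H(I) \cong H(S)$ via the principal ideal isomorphism) is zero.  The long exact sequence therefore splits into short exact sequences identifying $H(R) \cong H(S) \oplus H(S) \cdot e_0$, where $e_0 := [h_0(1) b_{21}]$ is the new cocycle in $R$ lifting to a non-cocycle in $S$ with $d_2(h_0(1) b_{21}) = h_1^2 \cdot h_3 h_0(1) \in I$.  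Since $(h_0(1) b_{21})^2 = h_0(1)^2 b_{21}^2$ already lies in $S$ as a cocycle, one obtains as an algebra
\[
E_4 \;=\; H(R, d_2) \;\cong\; H(S)[e_0] \big/ \bigl(e_0^2 - h_0(1)^2 b_{21}^2\bigr).
\]

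Next I would argue $E_4 = E_\infty$ by comparison with the full motivic May spectral sequence.  Any potential $d_r$ for $r \geq 4$ on a surviving class would, under Proposition \ref{prop:may-compare} and the explicit differentials recorded in Section \ref{sctn:May}, have target involving either $h_i$ with $i \geq 4$ (absent from the $A(3)$-subalgebra) or a positive power of $\tau$ (which vanishes after passage to $\M_2/\tau$); hence all higher differentials vanish in this localized range.

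Finally, one identifies the polynomial generators with their familiar names by degree comparison: $c_0 \leftrightarrow h_1 h_0(1)$, $d_0 \leftrightarrow b_{20}^2$, $e_0 \leftrightarrow h_0(1) b_{21}$, $g \leftrightarrow b_{21}^2$, with $b_{31}$ and $b'_{40}$ supplying the remaining wedge-type generators in the table.  Substituting $h_0(1) = h_1^{-1} c_0$ into $e_0^2 = h_0(1)^2 b_{21}^2$ yields $h_1^2 e_0^2 = c_0^2 g$, i.e.\ the claimed relation $h_1^2 e_0^2 + c_0^2 g = 0$ in characteristic $2$.  The main obstacle I anticipate is the LES bookkeeping in the second step: confirming that the connecting homomorphism contributes exactly one new multiplicative generator $e_0$, with no further indecomposables sneaking in, and that its square produces the only new algebraic relation needed to present $E_\infty$.
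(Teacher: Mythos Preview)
Your computation of the $d_2$-cohomology is correct and considerably more detailed than the paper, whose proof is the single sentence ``After the $d_2$ differential, there are no more differentials, and $E_3 = E_\infty$.'' The long-exact-sequence argument cleanly produces $e_0 = [h_0(1) b_{21}]$ as the unique new indecomposable: the quotient in $0 \to H(S) \to H(R) \to H(S) \to 0$ is free of rank one over $H(S)$, so the sequence splits as $H(S)$-modules, $\{1, e_0\}$ is a basis, and $e_0^2 = h_0(1)^2 b_{21}^2 = h_1^{-2} c_0^2 g \in H(S)$ is the only new relation. Your anticipated obstacle therefore does not arise. (Minor slip: $b_{20}^2$ is the class $P$ in the table, not $d_0$; the latter is $h_0(1)^2$.)

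There is, however, a gap in your argument that $E_4 = E_\infty$. Comparison with the May spectral sequence for $A$ handles $P$, $c_0$, $e_0$, and $g$, but it does not handle $b_{31}$ or $b'_{40}$: in the $h_1$-local May spectral sequence for $A$ one has $d_2(b_{31}) = h_4 b_{21}$ and $d_2(b'_{40}) = h_4(h_1^2 b_{30} + b_{20} b_{21}) = h_4 h_0(1)^2$, both nonzero since $h_4 \neq 0$ in the full algebra. Thus these two classes do not survive to $E_4^A$, and naturality of the map $E_r^A \to E_r^{A(3)}$ gives no information about their higher differentials. The correct fix is a direct degree count in the $A(3)$ spectral sequence itself. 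Writing a generic monomial as $h_1^a P^p c_0^c e_0^e g^G b_{31}^u (b'_{40})^q$ and subtracting the $m$- and $f$-equations for a putative target of $d_r$ gives $4p + 2c + 4e + 4G + 4u + 6q = m - f$; since $d_r$ on any of the six generators lands in a degree with $m - f \leq 2$ once $r \geq 4$ (and $m - f < 0$ is impossible), one is reduced to a handful of cases, each of which fails on the $s$-coordinate.
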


\begin{center}
\begin{tabular}{|l|l|l|}
\hline
generator & $(m,s,f,w)$ & $E_2$ description \\
\hline
$P$ & $(8,8,4,4)$ & $b_{20}^2$ \\
$c_0$ & $(5,8,3,5)$ & $h_1 h_0(1)$ \\
$e_0$ & $(8,17,4,10)$ & $b_{21} h_0(1)$ \\
$g$ & $(8,20,4,12)$ & $b_{21}^2$ \\
$b_{31}$ & $(6,26,2,14)$  & \\
$b'_{40}$ & $(10,30,4,16)$  & \\
\hline
\end{tabular}
\end{center}

\begin{proof}
After the $d_2$ differential, there are no more differentials,
and $E_3 = E_\infty$.
\end{proof}

At this point, we almost completely know $\Ext_{A(3)}[h_1^{-1}]$.
There is just one possible hidden extension.

\begin{lemma}
\label{lem:A3-hidden-extn}
$h_1^2 e_0^2 + c_0^2 g = h_1^4 b'_{40}$ 
in $\Ext_{A(3)}[h_1^{-1}]$,
\end{lemma}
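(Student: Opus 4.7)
The plan is to establish this hidden extension by a Massey product computation, in the same spirit as the other hidden extension results in Section \ref{sctn:hidden-extn}. First, using Proposition \ref{prop:A3-Einfty}, I would confirm that $h_1^4 b'_{40}$ (interpreted with the appropriate $h_1$-power to match tridegree) is the unique viable candidate for the value of the hidden extension: in the tridegree under consideration, the only class detected in strictly higher May filtration than $h_1^2 e_0^2$ and $c_0^2 g$ is an $h_1$-multiple of $b'_{40}$, and tridegree matching forces the exponent. The $E_\infty$-relation $h_1^2 e_0^2 + c_0^2 g = 0$ arises from the $d_2$ differential, so the actual lift of this sum to $\Ext_{A(3)}[h_1^{-1}]$ must live in strictly higher May filtration, which is exactly where $b'_{40}$ contributes.

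Next, I would develop Massey product representations of the terms involved. Using the May $d_2$ differentials $d_2(b_{20}) = \tau h_1^3$ and $d_2(b_{21}) = h_1^2 h_3$, one obtains that $e_0 = b_{21} h_0(1)$ and $g = b_{21}^2$ are detected by brackets involving $h_3$ and $h_0(1)$. Similarly, since $b'_{40} = h_1^2 b_{40} + b_{20} b_{31}$ on $E_2$, the element $b'_{40}$ can be written as a Massey product involving $\tau$, $h_1^3$, and $b_{31}$, because the combination $h_1^2 b_{40} + b_{20} b_{31}$ is precisely the kind of expression that resolves when one juggles the $d_2$ on $b_{20}$ against $b_{31}$ via a May-cocycle relation in the bar construction.

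A shuffle identity relating these Massey products should then reduce the desired equation to the $E_\infty$-relation $h_1^2 e_0^2 + c_0^2 g = 0$, together with the claimed hidden correction by $h_1^4 b'_{40}$. The main obstacle will be carefully tracking Massey product indeterminacies in the $h_1$-localized setting and keeping the $\tau$-factors balanced: although $\tau = 0$ on $E_\infty$ after $h_1$-localization (as noted in the proof of Proposition \ref{prop:A3-Einfty}), $\tau$ itself is nonzero in $\Ext_{A(3)}[h_1^{-1}]$, so the proof must distinguish identities valid on $E_\infty$ from those valid in $\Ext$.
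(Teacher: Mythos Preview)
Your proposal outlines precisely the approach that the paper's author explicitly reports does \emph{not} work. Immediately following this lemma, the paper remarks that the formula ``has been verified by machine computation,'' that the author has ``been unable to find a conceptual proof,'' and that ``use of brackets and squaring operations has not yielded any results.'' So the paper's own ``proof'' is simply an appeal to a machine-generated minimal resolution; there is no argument to compare to.

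Given that, your plan needs to be held to a higher standard than a sketch: you would have to produce an actual Massey product identity and shuffle that an expert in these computations could not find. As written, the proposal is too schematic to evaluate. You do not specify which bracket you intend to represent $b'_{40}$ by, nor which shuffle relation is supposed to produce the relation $h_1^2 e_0^2 + c_0^2 g = h_1^4 b'_{40}$. The sentence ``A shuffle identity relating these Massey products should then reduce the desired equation\ldots'' is a hope, not an argument, and the author's remark suggests this hope does not straightforwardly pan out.

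There is also a factual slip: you write that ``$\tau$ itself is nonzero in $\Ext_{A(3)}[h_1^{-1}]$.'' This is false. The differential $d_2(b_{20}) = \tau h_1^3$ (after setting $h_0 = 0$) kills $\tau h_1^3$, and since $h_1$ is a unit, $\tau = 0$ already on $E_4$ and hence in the abutment; there is nothing in the relevant tridegree of $E_\infty$ for $\tau$ to be detected by. So any proposed bracket like ``$\langle \tau, h_1^3, b_{31}\rangle$'' for $b'_{40}$ collapses, and the ``balancing $\tau$-factors'' part of your plan cannot proceed as stated.
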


\begin{remark}
The formula
has been verified by machine computation.
Unfortunately,
we have been unable to find a conceptual proof of 
Lemma \ref{lem:A3-hidden-extn}.  
Use of brackets and squaring operations has not yielded any
results.
\end{remark}

\begin{remark}
This obscure algebraic hidden extension has significant consequences
for classical Adams differentials, as shown in \cite{I2}.
\end{remark}

\begin{thm}
\label{thm:A3-h1-local}
$\Ext_{A(3)} [ h_1^{-1} ]$ is equal to 
the free polyonmial $\frac{\M_2}{\tau}[h_1^{\pm 1}]$-algebra
generated by the elements in the following table.
\end{thm}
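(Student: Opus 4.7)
The plan is to pass from the $E_\infty$-description of Proposition \ref{prop:A3-Einfty} to the full $\Ext_{A(3)}[h_1^{-1}]$ by incorporating the single hidden extension of Lemma \ref{lem:A3-hidden-extn}. By Proposition \ref{prop:A3-Einfty}, the $E_\infty$-page is the $\frac{\M_2}{\tau}[h_1^{\pm 1}]$-algebra on $P$, $c_0$, $e_0$, $g$, $b_{31}$, $b'_{40}$ modulo the single relation $h_1^2 e_0^2 + c_0^2 g = 0$. Since $h_1$ is invertible, the hidden extension of Lemma \ref{lem:A3-hidden-extn} rewrites this relation in $\Ext_{A(3)}[h_1^{-1}]$ as
\[
b'_{40} = h_1^{-2} e_0^2 + h_1^{-4} c_0^2 g.
\]
This lets us eliminate $b'_{40}$ from the generating set while simultaneously using up the only relation. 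What remains is a polynomial algebra on the elements listed in the theorem's table, namely $P$, $c_0$, $e_0$, $g$, and $b_{31}$ (together with $h_1^{\pm 1}$ over $\M_2/\tau$).

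The first step is to verify that the only hidden extensions in $\Ext_{A(3)}[h_1^{-1}]$ are the one in Lemma \ref{lem:A3-hidden-extn}. For this one would run an exhaustive search through the filtration on the $h_1$-local $E_\infty$-page of Proposition \ref{prop:A3-Einfty}, checking in each tridegree $(s,f,w)$ that the associated graded has at most one nonzero class and that no multiplicative relation among the generators can be altered compatibly with the tridegree (this is where the specific $h_1$-local degree bookkeeping from the table in Proposition \ref{prop:A3-Einfty} is used). Once that exhaustiveness is established, the only possible correction is the Lemma \ref{lem:A3-hidden-extn} formula, and the proof of the theorem reduces to the algebraic manipulation above.

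The second step is to check that after the elimination of $b'_{40}$, the resulting algebra really is free: no two monomials in $P, c_0, e_0, g, b_{31}$ coincide in the same tridegree, and no further relation is forced. Since the $E_\infty$-page is a polynomial ring modulo the one relation that we have now removed, and since localization is exact and preserves monomial bases, freeness is automatic once $b'_{40}$ has been absorbed.

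I expect the one slightly delicate point to be confirming, for the exhaustive search in step one, that $b'_{40}$ is the unique additional multiplicative generator in its tridegree on the $E_\infty$-page, so that no competing extension $h_1^2 e_0^2 + c_0^2 g = h_1^4 b'_{40} + (\text{something else})$ can occur. In the range displayed in the $E_\infty$-table this is immediate, since no other generator has the required tridegree. After that verification, the theorem follows formally from Proposition \ref{prop:A3-Einfty} and Lemma \ref{lem:A3-hidden-extn}.
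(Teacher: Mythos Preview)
Your approach is correct and matches the paper's proof, which simply states that the theorem follows immediately from Proposition \ref{prop:A3-Einfty} and Lemma \ref{lem:A3-hidden-extn}. You have spelled out the algebraic content of ``immediately'': the hidden extension lets you solve for $b'_{40}$ in terms of the other generators, eliminating both it and the sole relation, leaving a free polynomial algebra on $P$, $c_0$, $e_0$, $g$, $b_{31}$.
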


\begin{center}
\begin{tabular}{|l|l|l|}
\hline
generator & $(s,f,w)$ & $E_2$ description \\
\hline
$P$ & $(8,4,4)$ & $b_{20}^2$ \\
$c_0$ & $(8,3,5)$ & $h_1 h_0(1)$ \\
$e_0$ & $(17,4,10)$ & $b_{21} h_0(1)$ \\
$g$ & $(20,4,12)$ & $b_{21}^2$ \\
$b_{31}$ & $(26,2,14)$  & \\
\hline
\end{tabular}
\end{center}

\begin{proof}
This follows immediately from Proposition \ref{prop:A3-Einfty}
and Lemma \ref{lem:A3-hidden-extn}.
\end{proof}

\begin{cor}
Through the geometric 70-stem, the $h_1$-local elements of
$\Ext$ are precisely the elements indicated in the charts
with red arrows of slope 1.
\end{cor}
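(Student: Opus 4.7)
The plan is to compare $\Ext$ with $\Ext_{A(3)}$ via the restriction map induced by the inclusion $A(3) \hookrightarrow A$, exploiting the complete description of $\Ext_{A(3)}[h_1^{-1}]$ from Theorem \ref{thm:A3-h1-local}. The restriction map commutes with $h_1$-multiplication and therefore induces a map $\Ext[h_1^{-1}] \to \Ext_{A(3)}[h_1^{-1}]$; any element of $\Ext$ whose image in the latter is non-zero is automatically $h_1$-local.

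First I would use this criterion to verify that every element lying on a red-arrow family in the charts is $h_1$-local. By Theorem \ref{thm:A3-h1-local}, $\Ext_{A(3)}[h_1^{-1}]$ is a free polynomial $\frac{\M_2}{\tau}[h_1^{\pm 1}]$-algebra on $P$, $c_0$, $e_0$, $g$, and $b_{31}$. The red-arrow families correspond exactly to products of the symbols $h_1$, $c_0$, $P$, $d_0$, $e_0$, $g$, together with multiples of $B_1$, and each such product restricts to a non-zero element of this polynomial algebra (with $B_1$ corresponding to a class involving $b_{31}$). Non-vanishing after restriction forces $h_1$-locality in $\Ext$.

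For the reverse inclusion I would proceed by finite inspection through the 70-stem. For each multiplicative generator listed in Table 14 that does not lie on a red-arrow family, I would exhibit an explicit finite power of $h_1$ that annihilates it, working either directly on the $E_\infty$-page of the motivic May spectral sequence or using the hidden $h_1$-extensions recorded in Table 12. Since every element outside the red-arrow families is a product in which such an $h_1$-torsion generator appears, finiteness of the $h_1$-action propagates to all such products. The range is finite, so this reduces to systematic bookkeeping against the tables in Section \ref{sctn:table}.

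The hardest step will be verifying the $h_1$-locality of elements like $B_1$ and its multiples, whose infinite $h_1$-divisibility is invisible in $\Ext_{A(2)}[h_1^{-1}]$ and only appears after passing to $A(3)$ via the hidden relation $h_1^2 e_0^2 + c_0^2 g = h_1^4 b'_{40}$ of Lemma \ref{lem:A3-hidden-extn}. One must also confirm that no further exotic hidden $h_1$-extensions promote otherwise-torsion classes to $h_1$-local ones in our range; this requires careful cross-referencing against Table 12 and the machine-verified data on $\Ext_{\cl}$ that underlies Section \ref{sctn:hidden-extn}.
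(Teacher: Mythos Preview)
Your approach is essentially the same as the paper's: the key point is that the red-arrow elements are detected by the map $\Ext[h_1^{-1}] \to \Ext_{A(3)}[h_1^{-1}]$, using the explicit description of the target from Theorem~\ref{thm:A3-h1-local}. The paper's proof consists of exactly this one sentence and does not separately argue the reverse direction at all; that direction is already built into the charts, since the red arrows by definition record the infinite $h_1$-towers visible after the May spectral sequence computation and the complete list of hidden $h_1$-extensions in Table~12. So your finite-inspection plan for the reverse direction is correct but more elaborate than what the paper does---there is no additional argument to supply, only the observation that the charts together with Table~12 are exhaustive in this range. One small terminological slip: you write ``infinite $h_1$-divisibility'' for $B_1$, but you mean that $B_1$ supports infinitely many $h_1$-\emph{multiplications}; $h_1$-locality is about non-vanishing of $h_1^k x$, not about $x$ being divisible by $h_1^k$.
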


\begin{proof}
Each of these elements is detected by the map
$\Ext [h_1^{-1} ] \map \Ext_{A(3)} [h_1^{-1}]$.
\end{proof}

We give one specific consequence of these $h_1$-local calculations.  
The following corollary is the algebraic key to a number of Adams differentials
beyond the 50-stem.

\begin{cor}
In $\Ext$, there is a hidden extension
$e_0^3 + d_0 \cdot e_0 g = h_1^5 B_1$.
\end{cor}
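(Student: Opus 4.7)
The strategy is to derive this hidden extension from the $A(3)$-level hidden extension of Lemma \ref{lem:A3-hidden-extn} by pulling back along the detection map $\Ext[h_1^{-1}] \to \Ext_{A(3)}[h_1^{-1}]$. The elements $B_1$, $d_0$, $e_0$, and $g$ are all $h_1$-local in $\Ext$, as noted in Section \ref{sctn:examples}, so by the preceding corollary the detection map is injective on both sides of the proposed identity; it therefore suffices to verify the identity after applying this map.

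First, multiply the relation $h_1^2 e_0^2 + c_0^2 g = h_1^4 b'_{40}$ of Lemma \ref{lem:A3-hidden-extn} by $e_0$ to obtain
\[
h_1^2 e_0^3 + c_0^2 e_0 g = h_1^4 e_0 b'_{40}
\]
in $\Ext_{A(3)}[h_1^{-1}]$. The next step is to rewrite each side as the image of an element of $\Ext$. Theorem \ref{thm:A3-h1-local} exhibits $\Ext_{A(3)}[h_1^{-1}]$ as a polynomial ring on $P, c_0, e_0, g, b_{31}$ over $\frac{\M_2}{\tau}[h_1^{\pm 1}]$, so a monomial-by-monomial degree count determines the image of each $h_1$-local element of $\Ext$. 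In particular, $d_0$ has the unique candidate image $h_1^{-2} c_0^2$, so $c_0^2 e_0 g$ is the image of $h_1^2 d_0 e_0 g$; similarly, a degree count in the trigrading of $B_1$ identifies $e_0 b'_{40}$ with the appropriate $h_1$-multiple of the image of $B_1$.

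After these substitutions the displayed equation becomes $h_1^2(e_0^3 + d_0 e_0 g) = h_1^7 B_1$ in the image of the detection map. Since $\Ext_{A(3)}[h_1^{-1}]$ is a polynomial ring and hence $h_1$-torsion free, we cancel the common factor of $h_1^2$ on both sides, and injectivity of detection on $h_1$-local elements transports the resulting identity back into $\Ext$.

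The hardest step is the degree-matching identification of $e_0 b'_{40}$ with the appropriate multiple of the image of $B_1$: one must enumerate all monomials in $P, c_0, e_0, g, b_{31}$ (times powers of $h_1$) lying in the trigrading of $B_1$ and rule out unwanted contributions. This is best done by invoking further structural data such as the relation $c_0 \cdot B_1 = h_1 B_8$ used in Section \ref{subsctn:t-hidden}, which constrains the image of $B_1$ enough to pin down the coefficient accompanying the $e_0 b'_{40}$ term.
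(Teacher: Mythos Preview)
Your strategy—computing the images of both $e_0^3 + d_0 \cdot e_0 g$ and $h_1^5 B_1$ in $\Ext_{A(3)}[h_1^{-1}]$ and showing they agree—is a natural reading of the setup, but it is not the route the paper takes, and your version leaves the decisive step incomplete. You acknowledge that identifying the image of $B_1$ in $\Ext_{A(3)}[h_1^{-1}]$ is ``the hardest step,'' and then you do not carry it out; gesturing at $c_0 \cdot B_1 = h_1 B_8$ does not pin down that image, since you would then also need the image of $B_8$. Without this identification your argument does not establish the claimed equality.

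There is also a subtlety you pass over: the element $e_0 g \in \Ext$ is a multiplicative \emph{generator} (since $g$ itself does not exist there, only $\tau g$), so its image in $\Ext_{A(3)}[h_1^{-1}]$ is not automatically the product $e_0 \cdot g$. The paper notes explicitly that it could be either $e_0 g$ or $e_0 g + h_1^3 c_0 b_{31}$, and deliberately avoids deciding which. Your substitution implicitly assumes the image of the generator $e_0 g$ is the monomial $e_0 g$, and hence that the image of $d_0 \cdot e_0 g$ is $h_1^{-2} c_0^2 \cdot e_0 g$; that is exactly the point in doubt.

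The paper's argument bypasses both problems. It observes that, whichever of the two possible values the image of $e_0 g$ takes, the element $e_0^3 + d_0 \cdot e_0 g$ maps to something \emph{nonzero} in the polynomial ring $\Ext_{A(3)}[h_1^{-1}]$ (the monomials $e_0^3$ and $h_1^{-2} c_0^2 \cdot e_0 g$ are distinct, and the possible correction term $h_1 c_0^3 b_{31}$ is distinct from both). Hence $e_0^3 + d_0 \cdot e_0 g$, which vanishes on the May $E_\infty$-page, is nonzero in $\Ext$; therefore there is a hidden extension, and $h_1^5 B_1$ is the only element available in that tridegree and higher May filtration. No knowledge of the image of $B_1$ is required at all.
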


\begin{proof}
This extension can be detected by the map $\Ext \map \Ext_{A(3)}[h_1^{-1}]$.
Under the induced map on $E_\infty$-pages of the respective May spectral sequences,
we have that
$d_0$ goes to $h_1^{-2} c_0^2$;
$e_0$ goes to $e_0$; and
$e_0 g$ goes to $e_0 g$.

The map $\Ext \map \Ext_{A(3)}[h_1^{-1}]$ then takes
$d_0$ to $h_1^{-2} c_0^2$;
and $e_0$ to $e_0$.
However,
for degree reasons, it is possible that $e_0 g$ maps to
either $e_0 g$ or $e_0 g + h_1^3 c_0 b_{31}$.
(In fact, it maps to $e_0 g + h_1^3 c_0 b_{31}$, but we will not need
this.)

In either case,
$e_0^3 + d_0 \cdot e_0 g$ maps to
a non-zero element in $\Ext_{A(3)}[h_1^{-1}]$.
It follows that there is a hidden
extension, and there is only one possibility.
\end{proof}

\begin{landscape}

\section{Tables}
\label{sctn:table}



\end{landscape}


\newpage

\section{$\Ext$ charts}
\label{sctn:Ext}

The charts show the cohomology of the motivic Steenrod algebra
over $\Spec \C$.

Here is a key for reading the charts.
\begin{enumerate}
\item
An element of degree $(s,f,w)$ appears at coordinates
$(s,f)$.
\item
Black dots indicate copies of $\M_2$.
\item
Red dots indicate copies of $\M_2 / \tau$.
\item
Blue dots indicate copies of $\M_2 / \tau^2$.
\item
Green dots indicate copies of $\M_2 / \tau^3$.
\item
Vertical lines indicate $h_0$ multiplications.  These lines might be
black, red, blue, or green, depending on the $\tau$ torsion of the target.
\item
Lines of slope $1$ indicate $h_1$ multiplications.  
These lines might
be black, red, blue, or green, depending on the $\tau$ torsion of the 
target.
\item
Lines of slope $1/3$ indicate $h_2$ multiplications.
These lines might
be black, red, blue, or green, depending on the $\tau$ torsion of the 
target.
\item
Red arrows indicate an infinite tower of $h_1$ multiplications,
all of which are annihilated by $\tau$.
\item
Magenta lines indicate that an extension hits $\tau$ times a generator.
For example, $h_0 \cdot h_0 h_2 = \tau h_1^3$ in the geometric 3-stem.
\item
Orange lines indicate that an extension hits $\tau^2$ times a generator.
For example, $h_0 \cdot h_0^3 x = \tau^2 h_0 e_0 g$ in the 
geometric 37-stem.
\item
Dotted lines indicate that the extension is hidden in the May
spectral sequence.
\item
Squares indicate that there is a $\tau$ extension that
is hidden in the May spectral sequence.
For example, $\tau \cdot P c_0 d_0 = h_0^5 r$
in the geometric 30-stem.
\end{enumerate}

\begin{landscape}


\psset{linewidth=0.3mm}
\renewcommand{\cirrad}{0.08}

\psset{unit=0.57cm}



\end{landscape}

\bibliographystyle{amsalpha}

\begin{thebibliography}{SGA4}

\bibitem[Bl]{Bl}
S.\ Bloch,
\emph{Algebraic cycles and higher $K$-theory},
Adv.\ Math.\ \textbf{61} (1986) 267-Ð304. 

\bibitem[Br1]{Br1}
R.\ R.\ Bruner, 
\emph{The cohomology of the mod 2 Steenrod algebra: A computer calculation}, Wayne State University Research Report \textbf{37} (1997).

\bibitem[Br2]{Br2}
R.\ R.\ Bruner,
\emph{Squaring operations in $\Ext_\mathcal{A}(F_2, F_2)$}, preprint.

\bibitem[DHI]{DHI}
D.\ Dugger, M.\ A.\ Hill, and D.\ C.\ Isaksen,
\textit{The cohomology of the motivic Steenrod algebra over $\R$},
in preparation.

\bibitem[DI]{DI} 
D.\ Dugger and D.\ C.\ Isaksen,
\textit{The motivic Adams spectral sequence},
Geom.\ Topol.\ \textbf{14} (2010) 967-Ð1014. 

\bibitem[I1]{I1}
D.\ C.\ Isaksen,
\emph{The cohomology of motivic $A(2)$},
Homology, Homotopy Appl.\ \textbf{11} (2009) 251-Ð274. 

\bibitem[I2]{I2}
D.\ C.\ Isaksen,
\emph{Some differentials in the motivic Adams spectral sequence 
over $\Spec \C$}, 
in preparation.

\bibitem[MT]{MT}
M.\ Mahowald and M.\ Tangora,
\emph{An infinite subalgebra of $\Ext_A(Z_2,Z_2)$},
Trans.\ Amer.\ Math.\ Soc.\ \textbf{132} (1968) 263Ð-274. 

\bibitem[Ma1]{Ma1}
J.\ P.\ May, 
\emph{The cohomology of restricted Lie algebras and of Hopf algebras;
application to the Steenrod algebra}, Ph.D. dissertation, Princeton, 1964.

\bibitem[Ma2]{Ma2}
J.\ P.\ May, 
\emph{A general algebraic approach to Steenrod operations},
in The Steenrod algebra and its applications (Columbus, 1970), 153--231,
Lecture Notes in Mathematics \textbf{168}, Springer, 1970.

\bibitem[Mi]{Mi}
J.\ R.\ Milgram, 
\emph{Steenrod squares and higher Massey products},
Bol.\ Soc.\ Mat.\ Mexicana \textbf{13} (1968) 32-Ð57. 

\bibitem[T]{T} 
M.\ C.\ Tangora,
\emph{On the cohomology of the Steenrod algebra},
Math.\ Z.\ \textbf{116} (1970) 18-Ð64.  
 
\bibitem[V1]{V1}
V.\ Voevodsky,
\emph{Motivic cohomology with $\Z/2$-coefficients},
Publ.\ Math.\ Inst.\ Hautes \'Etudes Sci.\  \textbf{98} (2003) 59--104. 

\bibitem[V2]{V2}
V.\ Voevodsky,
\emph{Reduced power operations in motivic cohomology},
Publ.\ Math.\ Inst.\ Hautes \'Etudes Sci.\  \textbf{98} (2003) 1--57. 

\bibitem[V3]{V3}
V.\ Voevodsky,
\emph{Motivic Eilenberg-Mac Lane spaces}, 
Publ.\ Math.\ Inst.\ Hautes \'Etudes Sci.\ \textbf{112} (2010) 1-Ð99.


\end{thebibliography}

\end{document}